\newtheorem*{theorem*}{Main Theorem}
\newtheorem{theorem}{Theorem}
\newtheorem{lem}{Lemma}
\newtheorem{prop}[theorem]{Proposition}
\newtheorem{rem}[theorem]{Remark}
\theoremstyle{definition}
\newtheorem*{ques}{Question}
\newtheorem*{fact}{Fact}
\newcommand{\Leb}{\mathop{\mathrm{Leb}}}
\newcommand{\vertiii}[1]{{\left\vert\kern-0.25ex\left\vert\kern-0.25ex\left\vert #1 
    \right\vert\kern-0.25ex\right\vert\kern-0.25ex\right\vert}}
\theoremstyle{remark}
\numberwithin{equation}{section}
\begin{document}

\title[ SRB measures for partially hyperbolic systems]{  SRB measures for partially hyperbolic systems with one-dimensional center subbundles.}

%    Information for first author
\author{David Burguet}
%    Address of record for the research reported here
%\address{Sorbonne Universite, LPSM, 75005 Paris, France}
%  \email{david.burguet@upmc.fr}  
%\subjclass[2010]{Primary 37C40, 37D25}

\date{October 2023}

%\dedicatory{This paper is dedicated to our advisors.}

 \maketitle
\begin{Large}

\begin{abstract}
For a partially hyperbolic attractor with a center bundle splitting in a dominated way into one-dimensional subbundles  we show that for Lebesgue almost every point there is an  empirical measure from $x$ with a  SRB component.  Moreover if the center exponents are non zero,  then  $x$ lies in the basin of an ergodic hyperbolic SRB measure and there are only finitely many such measures. This gives another proof of the existence of SRB measures in this context,  which was   established firstly  in \cite{dawei} by using random perturbations. Moreover this generalizes  results of \cite{SDJ,yang} which deal with a single one-dimensional center subbundle.   \end{abstract}

\vspace{1cm}

  For a $\mathcal C^{1+}$ diffeomorphism $f$  on a compact smooth manifold $\mathbf M$,  an invariant measure is called an SRB (Sina\"i-Ruelle-Bowen) measure when it has a positive Lyapunov exponent almost everywhere and satisfies Pesin entropy formula,  i.e.  its entropy is equal to the sum of the positive Lyapunov exponents.  Equivalently the conditional measures along local unstable Pesin manifolds are absolutely continuous with respect to the Lebesgue measure \cite{Led}.  A fundamental problem in dynamics consists in understanding the statistical behaviour of $(\mathbf M,f)$ :  what are the limits for the weak-* topology  of the empirical measures $\mu_x^n:=\frac{1}{n}\sum_{0\leq k<n}\delta_{f^kx}$, $x\in \mathbf M$, when $n$ goes to infinity?  Ergodic SRB  measures $\mu$,   which are hyperbolic (i.e.  with non zero Lyapunov exponents), are specially important in this respect,  because their basins $\mathcal B(\mu)=\{x\in \mathbf M, \ \mu_x^n\xrightarrow{n\rightarrow +\infty}\mu\}$ have positive Lebesgue measure \cite{Led84}.  For hyperbolic attractors,    Sina\"i,  Ruelle and Bowen have studied these measures and their basin in the seventies.   In this setting,   there are finitely may ergodic (hyperbolic) SRB  measures whose basins cover a set of full Lebesgue measure \cite{Sin72,BoR75}.   
  
  Beyond unifrom hyperbolicity,  existence of SRB measures has been established for partially hyperbolic attractors whose center bundle splits in a dominated way into one-dimensional subbundles by using random perturbations or unstable entropies \cite{Young, dawei,SDJ}.  In this note we give another proof of this result by using an entropic variant  of the geometrical method developped in  \cite{Bur,bd}.  Moreover we show that  at Lebesgue almost every point $x$ there is a limit $\mu$ of $\left(\mu_x^n\right)_n$ such that some ergodic component of $\mu$ is an SRB measure. 

In the following  we consider an attracting  set $\Lambda$ of a $\mathcal C^{1+}$ diffeomorphism $f$, i.e. $\Lambda$ is a compact invariant set with an open neighborhood $U\subset \mathbf  M$ satisfying $f(\overline{U})\subset U $ and $\Lambda=\bigcap_{n \in \mathbb N}f^n U$, with  a partially hyperbolic splitting $T\mathbf M|_{\Lambda} = E^u \oplus_{\succ}  E_{1} \oplus_{\succ}\cdots \oplus_{\succ} E_{k} \oplus_{\succ} E^s$ with $\mathrm{dim}(E_{i})=1$ for $i=1,\cdots, k$.   The invariant bundles $E^u$ and $E^s$ are expanded and contracted respectively :   there are $C>0$ and $\lambda\in ]0,1[$ such that for any $x\in\Lambda$ and any $n\in\mathbb N$, it holds that  $\|Df^n|_{E^s(x)}\|\le C\lambda^n$ and $\|Df^{-n}|_{E^u(x)}\|\le C\lambda^n$. Moreover for  two $Df$-invariant subbundles $E,F\subset T_\Lambda \mathbf  M$, the bundle $E$ is \emph{dominated} by $F$, denoted as $F\oplus_\succ E$,   when there are $C>0$ and $\lambda\in ]0,1[$ such that $\|Df^n|_{E(x)}\|\|Df^{-n}|_{F(f^nx)}\|\le C\lambda^n$ for any $x\in\Lambda$ and any $n\in\mathbb N$. Any diffeomorphim $\mathcal C^1$ away from the set of diffeomorphisms exhibiting a homoclinic tangency may be approximated by partially hyperbolic diffeomorphisms of this form (see \cite{crov,dawei}). 

 An empirical measure from $x\in \mathbf M$ is a limit for the weak-$*$ topology of the sequence of atomic measures $(\mu_x^n)_{n\in \mathbb N}$.   We let $pw(x)$ be the set of  empirical measures from $x$.
When $\nu$ and $\mu$ are two non-zero Borel  measures on $\mathbf M$,  we say that $\nu$ is a component of $\mu$  when $\nu(A)\leq \mu(A)$ for any Borel set $A$.  An SRB component of an invariant measure $\mu$  is a component of $\mu$,  such that the associated probability $\frac{\nu(\cdot)}{\nu(\mathbf M)}$ is an SRB probability measure. \\

The main results of this paper read as follows. 

\begin{theorem}\label{mmain}
With the above notations, for Lebesgue almost every $x\in U$,  we have the following dichotomy :
\begin{itemize}
\item either $x$ lies in the basin of an ergodic hyperbolic SRB measure, 
\item or there is $\mu\in pw(x)$ with non-hyperbolic SRB components. 
\end{itemize}
\end{theorem}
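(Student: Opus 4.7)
The plan is to adapt the entropic-geometric method of \cite{Bur,bd} to the present setting, where the center bundle is dominatedly split into one-dimensional subbundles. The overall strategy is to push forward Lebesgue measure on strong unstable disks, take time-averages, and extract weak-$*$ limits whose absolutely continuous structure along successive center subbundles is refined step by step.

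First I would reduce to local unstable disks. The strong unstable foliation tangent to $E^u$ is absolutely continuous, so Lebesgue measure on $U$ disintegrates (up to absolute continuity) into measures equivalent to Lebesgue on local unstable leaves; it therefore suffices to prove the dichotomy for Lebesgue-almost every point of a local unstable disk $D \subset U$. Let $\nu_D$ be the normalized Lebesgue measure on $D$ and set $\mu_D^n := \frac{1}{n}\sum_{0\leq k<n}f^k_*\nu_D$; any weak-$*$ limit is $f$-invariant and supported in $\Lambda$. A Pesin--Sinai-type bounded distortion argument, valid under $\mathcal C^{1+}$ regularity, shows that any such limit has absolutely continuous conditionals along local unstable leaves, i.e.\ is a $u$-Gibbs state with positive Lyapunov exponents along $E^u$.

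Next I would upgrade this $u$-Gibbs property to a full SRB structure along the center by induction on $i = 0, 1, \ldots, k$. At step $i$, starting from a family of disks tangent to $E^u \oplus E_1 \oplus \cdots \oplus E_{i-1}$ with uniformly controlled geometry, I would reparametrize forward iterates of these disks, exploiting the one-dimensionality of $E_i$ and the domination $E_{i-1} \oplus_\succ E_i$, into a combinatorially controlled number of disks tangent to $E^u \oplus \cdots \oplus E_i$ with bounded curvature, in the spirit of the Yomdin and Crovisier--Pujals reparametrization techniques at the heart of \cite{Bur,bd}. The number of reparametrization pieces is governed by the Birkhoff averages of $\log^+\|Df|_{E_i}\|$. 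Averaging pushforwards of Lebesgue on these disks and taking weak-$*$ limits yields an invariant measure whose ergodic components with positive Lyapunov exponent on $E_i$ have absolutely continuous conditionals along $E^u \oplus \cdots \oplus E_i$; by Ledrappier--Young those components then satisfy Pesin's entropy formula up to the $i$-th center direction. The induction proceeds to step $i+1$ unless some ergodic component has zero Lyapunov exponent on $E_i$, in which case the construction furnishes an SRB component along $E^u \oplus \cdots \oplus E_{i-1}$ which is non-hyperbolic, placing $x$ in case~(b) of the dichotomy.

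If no $\mu \in pw(x)$ has a non-hyperbolic SRB component, the induction runs to completion for every $\mu \in pw(x)$ and produces an ergodic hyperbolic SRB component of $\mu$. By \cite{Led84} and absolute continuity of unstable Pesin manifolds, distinct ergodic hyperbolic SRB measures have pairwise disjoint basins of positive Lebesgue measure, so only finitely many can exist; a standard Birkhoff-type argument, exploiting that a positive fraction of the time average of $x$ asymptotes to one such basin, places Lebesgue-a.e.\ such $x$ inside the basin of one of them. The main obstacle I anticipate is the inductive step: controlling simultaneously the geometry of the approximating disks under forward iteration and the transfer of absolute continuity at each level of the induction on the center subbundles. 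Both hinge on the one-dimensionality of each $E_i$ (which keeps the reparametrization combinatorics entropy-controllable) and on the $\mathcal C^{1+}$ hypothesis (for bounded distortion and Pesin-type absolute continuity to persist through the induction).
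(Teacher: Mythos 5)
Your high-level strategy---push forward Lebesgue measure on approximating discs, take time averages, extract weak-$*$ limits, and argue inductively through the center directions---has the same flavor as the paper's, which also builds on \cite{Bur,bd}. But the inductive step, which is the crux, has a genuine gap. You assert that after reparametrization, ``ergodic components with positive Lyapunov exponent on $E_i$ have absolutely continuous conditionals along $E^u\oplus\cdots\oplus E_i$'', and invoke Ledrappier--Young to conclude the partial entropy formula. This is exactly what must be proved, and it is false in general: $u$-Gibbs states with positive center exponents need not have absolutely continuous conditionals along the $i$-unstable direction, which is precisely why the existence of SRB measures in this setting is a nontrivial problem (cf.\ \cite{Young,dawei}). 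The paper supplies the missing ingredient in two steps. First, a Misiurewicz-type entropy lower bound (Lemma \ref{lem:Mis}) combined with a Gibbs volume estimate at hyperbolic times (Lemma \ref{dist}, from \cite[Prop.~2.7]{bd}) produces an invariant measure $\nu$ with $h(\nu)\geq\int\psi_i\,d\nu$. Second---and this is the new idea---it restricts the empirical averages to the set of hyperbolic (or weakly/midly hyperbolic) times for $-\phi^{i+1}$ (Lemmas \ref{nouv}, \ref{two}, \ref{nouvv}); the dichotomy $\alpha_{i+1}(x)<1$ guarantees these times have positive density, forcing the $(i+1)$-th center exponent of the limit to be nonpositive, so that $\int\psi_i\,d\nu$ is indeed the sum of positive exponents and Ruelle's inequality closes Pesin's formula. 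Your proposal contains neither the entropy lower bound nor the mechanism forcing the $(i+1)$-th exponent to be nonpositive; it in effect presupposes the conclusion.

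Two secondary problems. First, the Yomdin-style reparametrization you invoke requires $\mathcal C^r$ smoothness with $r$ large (as in \cite{Bur}, which is $\mathcal C^\infty$); the present theorem holds for $\mathcal C^{1+}$, and the paper replaces reparametrization by the Alves--Pinheiro bounded geometry at hyperbolic times (Lemma \ref{Lem:Pliss-iterate}) and the dynamical density point theorem (Proposition \ref{Thm:density-point}). Second, your finiteness argument---``distinct ergodic hyperbolic SRB measures have pairwise disjoint basins of positive Lebesgue measure, so only finitely many can exist''---is a non-sequitur: nothing prevents infinitely many basins of positive measure summing to at most $1$. Finiteness is the content of Theorem \ref{ttwo}, requires the extra hypothesis that every ergodic SRB measure is hyperbolic, and rests on Proposition \ref{prop:second} (any limit of distinct ergodic SRBs has a non-hyperbolic SRB component). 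Theorem \ref{mmain} itself does not assert finiteness. The closing ``standard Birkhoff-type argument'' is also too vague: placing $x$ in an SRB basin requires the dynamical density of hyperbolic times together with absolute continuity of the stable Pesin lamination to transfer the basin back to a positive-Lebesgue subset of the original disc, as carried out at the end of the proof of Proposition \ref{pro:mar}.
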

Theorem \ref{mmain} with $k=1$ has been proved in \cite{SDJ,yang} with another method.  
 In the second case of the alternative,  the empirical measures from $x$ may not converge - the point $x$ is said to have historical behaviour (see Theorem B in \cite{SDJ}). 
 
\begin{theorem}\label{ttwo}Assume moreover  that any ergodic SRB measure  is  hyperbolic. Then there are finitely many ergodic  hyperbolic SRB measures, whose basins cover a set of full Lebesgue measure in $U$. 
\end{theorem}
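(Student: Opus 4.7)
The plan is to deduce Theorem \ref{ttwo} from Theorem \ref{mmain} in two stages: first rule out the non-hyperbolic alternative to obtain the covering statement, then address finiteness separately.

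For the covering part, I would argue that under the hypothesis the second case of Theorem \ref{mmain} cannot occur. Suppose otherwise that some $\mu \in pw(x)$ has a non-hyperbolic SRB component $\nu$. The normalized probability $\bar\nu := \nu/\nu(\mathbf M)$ is SRB with some center Lyapunov exponent function vanishing on a set of positive $\bar\nu$-measure. The ergodic decomposition transports this vanishing to a positive-measure family of ergodic components, each of which is itself SRB (this is a standard consequence of the absolute-continuity-of-conditionals characterization on local unstable Pesin manifolds). At least one such component is therefore a non-hyperbolic ergodic SRB measure, contradicting the hypothesis. Hence Theorem \ref{mmain} collapses to its first alternative, and the union of the basins $\mathcal B(\mu)$, over all ergodic hyperbolic SRB measures $\mu$, has full Lebesgue measure in $U$.

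For finiteness I would argue by contradiction: assume a sequence $(\mu_n)_{n\ge 1}$ of pairwise distinct ergodic hyperbolic SRB measures and, by weak-$*$ compactness of the invariant probabilities, pass to a subsequential limit $\mu_n\to\mu$. Since each $E_i$ is one-dimensional, the functional $\nu\mapsto\int\log\|Df|_{E_i}\|\,d\nu$ is continuous, so $\chi_i(\mu_n)\to\chi_i(\mu)$. Pigeonholing on the $2^k$ sign patterns of $(\chi_1,\ldots,\chi_k)$, I may assume all $\mu_n$ share a common pattern $\sigma\in\{+,-\}^k$; then each $\mu_n$ is a u-Gibbs state for the expanding bundle $E^u\oplus\bigoplus_{\sigma_i=+}E_i$, and the limit $\mu$ inherits the u-Gibbs property (since u-Gibbs states form a weak-$*$ closed set). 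Re-running the geometric/entropic construction underlying Theorem \ref{mmain} on $\mu$ should then promote $\mu$ to a full SRB measure for this sign pattern, whose ergodic components are all hyperbolic SRB by hypothesis.

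The main obstacle is the final isolation step: extracting an actual contradiction from the fact that distinct ergodic $\mu_n$ accumulate on an SRB measure whose ergodic components are already ergodic hyperbolic SRB. A priori a limit of distinct ergodic measures can be ergodic, so one needs rigidity. I would expect this to be achieved by a Pesin-block argument: on a fixed Pesin block the conditional densities of any ergodic hyperbolic SRB along local unstable manifolds admit uniform upper and lower bounds, so each basin $\mathcal B(\mu_n)$ captures a definite amount of Lebesgue measure on a transversal. Summing over an infinite pairwise-disjoint family of such basins inside the bounded neighbourhood $U$ then contradicts $\Leb(U)<\infty$. Making the Pesin-block constants uniform along the sequence $(\mu_n)_n$—equivalently, controlling the exponents of the $\mu_n$ away from zero in a uniform way, which is precisely what the hypothesis of Theorem \ref{ttwo} is designed to furnish via the continuity of $\chi_i$—is the delicate technical point.
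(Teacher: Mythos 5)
Your covering argument matches the paper's: once the hypothesis rules out non-hyperbolic ergodic SRB measures, the ergodic decomposition of any non-hyperbolic SRB component (each ergodic piece is again SRB, and positive-measure vanishing of some $\phi^i_*$ must appear in an ergodic piece) yields a contradiction, so the second alternative in Theorem~\ref{mmain} is vacuous. That part is fine.

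The finiteness argument, however, takes a genuinely different route and has a real gap exactly where you flag it as ``the delicate technical point.'' Continuity of $\chi_i$ together with the hypothesis does \emph{not} keep the exponents of the $\mu_n$ uniformly away from zero: a priori $\chi_{i+1}(\mu_n)\to 0$ is entirely possible, since the limit $\mu$ need not be ergodic nor SRB and so is not constrained by the hypothesis. Once the exponents are allowed to degenerate, the Pesin-block constants blow up and the uniform lower bound on $\Leb(\mathcal B(\mu_n))$ evaporates, so the ``sum of disjoint basins exceeds $\Leb(U)$'' contradiction does not close. Your steps 5--7 are also under-specified: the $\mu_n$ are Gibbs $u$-states for the fixed strong unstable $E^u$, and that property does pass to the limit, but promoting the limit to an SRB measure for the enlarged bundle $E^u\oplus\bigoplus_{\sigma_i=+}E_i$ is precisely the hard part, not a consequence of closedness of $u$-Gibbs states.

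The paper avoids this entirely: finiteness is read off from Proposition~\ref{prop:second}, which is proved independently (and whose proof occupies the last section). That proposition says that for any sequence of \emph{distinct} ergodic SRB measures converging to $\mu$, either the sequence is eventually constant (excluded by distinctness) or $\mu$ carries a non-hyperbolic SRB component. Under the hypothesis of Theorem~\ref{ttwo} the latter is impossible, so no infinite set of ergodic SRB measures can exist in the compact space $\mathcal M(\Lambda,f)$. In other words, the paper confronts the degenerating-exponent scenario head-on and shows it manufactures a non-hyperbolic SRB component in the limit -- the entropic/volume estimates at hyperbolic times along $\mu_q$-typical unstable discs, together with upper semicontinuity of entropy, are what make this work. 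Your proposal never produces such a component; it instead tries to sidestep the degenerate case by a uniformity that the hypothesis does not supply.
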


Theorem \ref{ttwo} is proved in \cite{caomi} under the stronger assumption that all  ergodic Gibbs $u$-states are hyperbolic.  If all SRB measures are hyperbolic, the second case in Theorem \ref{mmain}  never occurs. Thus the topological basin $U$ is Lebesgue almost covered by the basins of ergodic SRB measures, so that the main new point in Theorem \ref{ttwo} is the finiteness property of SRB measures. \\

To build SRB measures we estimate the entropy of limit empirical measures from below by using a Gibbs property at hyperbolic times  as in \cite{bd}.  By considering empirical measures on a  specific subset of times we may  ensure these measures are in fact SRB.  After recalling some standard properties of empirical measures,  we introduce in the second section different notions of hyperbolic times and we relate their density with hyperbolic properties of the limit empirical measures.  In the third section we deduce  Theorem \ref{mmain} and Theorem \ref{ttwo} respectively  from Proposition \ref{pro:mar} and Proposition \ref{prop:second},   which describe more precisely the statistical behaviour in terms of the densities of hyperbolic times.  The proofs of these two propositions,  which 
share some similarities, are given in the last two sections. 

\newpage

\tableofcontents

\section{Empirical measures}

\subsection{General setting}
  We consider now  a general  invertible topological system $(X,T)$ (i.e. $T$ is a homeomorphism of a compact metric space $X$) together with a continuous  real observable $\phi:X\rightarrow \mathbb R$.  We let $\mathcal M(X)$ be the compact set of Borel probability measures endowed  with the weak-$*$ topology and we consider the  compatible convex  distance $\mathfrak d
$ on $\mathcal M(X)$ associated to a  countable family, $\{f_n, \, n\in \mathbb N\}$,  dense in the set $\mathcal C(X)$ of continuous real functions on $X$ endowed with the uniform topology : 
$$\forall \mu,\nu\in \mathcal M(X), \ \mathfrak d(\mu, \nu):=\sum_{n\in \mathbb N}\frac{|\int f_n\,d\mu-\int f_n\,d\nu|}{2^n(1+\|f_n\|_\infty)}.$$
A measure $\mu \in \mathcal M(X)$ is said $M$-almost invariant,  $M>0$,  when $\mathfrak d \left(\mu, T_*\mu\right)\leq \frac{1}{M}$.  Let $\mathcal M(X,T)$ be the compact subset of $\mathcal M(X)$ given by $T$-invariant measures. 

For $x\in X$ and $n\in \mathbb N$ we denote by $\mu_x^n$ the usual empirical measure 
$$\mu_x^n:=\frac{1}{n}\sum_{0\leq k<n}\delta_{T^kx}.$$ We also recall that  $pw(x)$ denotes the compact subset of $\mathcal M(X,T)$ given by the limits of $(\mu_x^n)_n$.   For a subset $E$ of $\mathbb N$ we let  $\overline{E}$ be the complement set of $E$, i.e. $\overline{E}:=\mathbb N\setminus E$.     We consider the empirical measure $\mu_x^{n}[E]$  associated to $E$: 
$$\mu_x^{n}[E]=\frac{1}{n}\sum_{0\leq k<n, \ k\in E}\delta_{T^kx}$$
For a subset $E$ of $\mathbb N$ and $M\in \mathbb N$ we let $E(M)=\{k\in \mathbb N,\ k+m\in E \text{ for some } 1\leq m\leq M \}$. 
The empirical measures $\left(\mu_x^{n}[E(M)]\right)_n$,  therefore their limits in $n$ or linear combinations,  are $M$-almost invariant.
\subsection{Bounding from below the entropy of empirical measures}

Following Misiurewicz's proof of the variational principle, we estimate the entropy of empirical measures from below. For a finite partition $P$ of $X$ and  a finite subset $F$ of $\mathbb N$, we let $P^F$ be the iterated partition $P^F=\bigvee_{k\in F}T^{-k}P$. When $F=\left[ 0,n\right[$, $n\in \mathbb N$, we just let $P^{F}=P^n$. We denote by $P(x)$ the element of $P$ containing $x\in X$.  Given a  measure $\mu$ on $X$ we let $\mu^{F}:=\frac{1}{\sharp F}\sum_{k\in F} T_*^k\mu$. 

For a Borel probability measure $\mu$ on $X$, the static entropy $H_\mu(P)$ of $\mu$ with respect to a (finite measurable) partition $P$  is defined as follows:
\begin{align*}
H_\mu(P)&=-\sum_{A\in P}\mu(A)\log \mu(A),\\
&=-\int \log \mu\left(P(x)\right)\, d\mu(x).
\end{align*}
When $\mu$ is $T$-invariant, we recall that  the measure theoretical entropy of $\mu$ with respect to $P$ is then   $$h_\mu(P)=\lim_n\frac{1}{n}H_{\mu}(P^n)$$ 
and the entropy $h(\mu)$ of $\mu$ is 
$$h(\mu)=\sup_Ph_\mu(P).$$

\begin{lem}\label{lem:Miss}\cite{Bur}
With the above notations we have 
\begin{align*}\forall m\in \mathbb N^*,\ \frac{1}{m}H_{\mu^F}(P^m)&\geq  \frac{1}{\sharp F}H_{\mu}(P^{F})-3m\log \left(\sharp P\right)\frac{\sharp \partial F}{\sharp F}.
\end{align*}
\end{lem}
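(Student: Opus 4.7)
\medskip

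\noindent\textbf{Proof plan.}
The plan is to combine two classical ingredients: concavity of the static entropy in the measure, and a Misiurewicz-type shift argument averaging over the $m$ possible residues modulo $m$.

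First, since $\mu \mapsto H_\mu(Q)$ is concave for any finite partition $Q$, applying it to $\mu^F = \frac{1}{\sharp F}\sum_{k\in F} T^k_*\mu$ with $Q=P^m$ gives
\[
H_{\mu^F}(P^m) \;\geq\; \frac{1}{\sharp F}\sum_{k\in F} H_{T^k_*\mu}(P^m) \;=\; \frac{1}{\sharp F}\sum_{k\in F} H_{\mu}(T^{-k}P^m).
\]
So it suffices to bound $\sum_{k\in F} H_\mu(T^{-k}P^m)$ from below by $m\, H_\mu(P^F)$ up to a boundary error. This is where the Misiurewicz trick enters.

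Fix a residue $r \in \{0,1,\dots,m-1\}$ and tile $\mathbb Z$ by blocks $B_{r,j}=[jm+r,\,jm+r+m)$. Let $I_r=\{j\in\mathbb Z : B_{r,j}\cap F\neq\emptyset\}$ and $F_r=\bigcup_{j\in I_r} B_{r,j}$, so $F\subset F_r$. Then
\[
H_\mu(P^F) \leq H_\mu(P^{F_r}) = H_\mu\Bigl(\bigvee_{j\in I_r} T^{-(jm+r)}P^m\Bigr) \leq \sum_{j\in I_r} H_\mu\bigl(T^{-(jm+r)}P^m\bigr),
\]
by monotonicity and subadditivity of static entropy. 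Summing over $r\in\{0,\dots,m-1\}$ gives
\[
m\,H_\mu(P^F) \;\leq\; \sum_{r=0}^{m-1}\sum_{j\in I_r} H_\mu\bigl(T^{-(jm+r)}P^m\bigr) \;=\; \sum_{l\in F-[0,m)} H_\mu(T^{-l}P^m),
\]
since, as $r$ and $j$ vary, the block-starts $jm+r$ run exactly over the set $F-[0,m)=\{k-s:k\in F,\,0\leq s<m\}$ of all possible starts of $m$-blocks meeting $F$.

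Splitting $F-[0,m)=F\sqcup\bigl((F-[0,m))\setminus F\bigr)$ and using $H_\mu(T^{-l}P^m)\leq m\log(\sharp P)$, we get
\[
m\,H_\mu(P^F) \;\leq\; \sum_{l\in F} H_\mu(T^{-l}P^m) \;+\; m\log(\sharp P)\cdot \sharp\bigl((F-[0,m))\setminus F\bigr).
\]
The key geometric point is that $(F-[0,m))\setminus F$ consists of points at distance $<m$ to $F$ but outside $F$, hence is covered by the $m$-neighbourhoods of the boundary pieces of $F$: up to the precise definition of $\partial F$ adopted in the paper, one checks $\sharp\bigl((F-[0,m))\setminus F\bigr)\leq 3m\,\sharp\partial F$, which yields the constant $3$ in the final bound (the minor slack absorbs the two "ends" of each maximal run of $F$ together with interior gaps). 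Combining the two displays and dividing by $m\sharp F$ produces the stated inequality.

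The only mildly delicate step is the boundary count in the last display: one must verify that with the paper's convention for $\partial F$ the elementary combinatorial estimate $\sharp((F-[0,m))\setminus F)\leq 3m\,\sharp\partial F$ indeed holds; everything else is routine concavity and subadditivity. I expect this boundary bookkeeping to be the only obstacle, and it is purely combinatorial, not dynamical.
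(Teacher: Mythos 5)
Your proof is correct and follows the standard Misiurewicz-tiling argument combined with concavity of static entropy, which is what the cited reference \cite{Bur} uses. The one step you flag as not fully verified — the bound $\sharp\bigl((F-[0,m))\setminus F\bigr)\le 3m\,\sharp\partial F$ — does hold, in fact with room to spare: each $l\in(F-[0,m))\setminus F$ lies in $[k_0-m+1,k_0-1]$ where $k_0$ is the smallest element of $F\cap[l+1,l+m)$ (then $k_0-1\notin F$, so $k_0\in F\setminus(F+1)\subset\partial F$), hence the set is covered by $\bigcup_{k\in F\setminus(F+1)}[k-m+1,k-1]$ and has cardinality at most $(m-1)\,\sharp\bigl(F\setminus(F+1)\bigr)\le\tfrac{m-1}{2}\,\sharp\partial F$ since $\sharp\partial F=2\,\sharp\bigl(F\setminus(F+1)\bigr)$.
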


In the above statement the set of times $F$ is a fixed finite subset.  Here we need to work with measurable finite set-valued maps $F$.  In this context we define $\mu^F$ as follows 
$$\mu^{F}:=\frac{\int\sum_{k\in F(x)} \delta_{T^kx} \, d\mu(x)}{\int \sharp F(x)\, d\mu(x)}$$ 
We may generalize Lemma \ref{lem:Mis} as follows:

\begin{lem}\label{lem:Mis}
With the above notations we have  for all $ m\in \mathbb N^*$:
\begin{align*} \frac{\int \sharp F(x)\, d\mu(x)}{m}H_{\mu^F}(P^m)&\geq \int-\log \mu\left(P^{F(x)}(x)\right) \, d\mu(x)\\
& -H_\mu(F)-\int 3m \sharp \partial  F(x)\log\left( \sharp  P\right) \, d\mu(x).
\end{align*}
\end{lem}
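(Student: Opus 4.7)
The plan is to reduce the set-valued case to Lemma \ref{lem:Miss} by conditioning on the level sets of $F$. Let $\mathcal F$ be the (at most countable) measurable partition of $X$ whose atoms are the sets $X_G := F^{-1}(\{G\})$ as $G$ ranges over the image of $F$; by definition $H_\mu(\mathcal F) = H_\mu(F)$. Writing $Z := \int \sharp F(x)\, d\mu(x)$, a direct computation rewrites the definition of $\mu^F$ as the convex combination
\[
\mu^F \;=\; \sum_G \frac{\mu(X_G)\,\sharp G}{Z}\,\nu_G, \qquad \nu_G \;:=\; \frac{1}{\sharp G}\sum_{k\in G} T_*^k\!\left(\frac{\mu|_{X_G}}{\mu(X_G)}\right),
\]
where the sum runs over the $G$ with $\mu(X_G)>0$.

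From here, concavity of static entropy gives $H_{\mu^F}(P^m)\geq \sum_G \frac{\mu(X_G)\sharp G}{Z}H_{\nu_G}(P^m)$. Applying Lemma \ref{lem:Miss} on each atom $X_G$ with the constant time set $G$ and the renormalized measure $\mu|_{X_G}/\mu(X_G)$, multiplying by $\mu(X_G)$, and summing yields
\[
\frac{Z}{m}H_{\mu^F}(P^m)\;\geq\; \sum_G \mu(X_G)\,H_{\mu|_{X_G}/\mu(X_G)}(P^G)\;-\;3m\log(\sharp P)\int\sharp\partial F(x)\,d\mu(x),
\]
which already accounts for the last term of the desired inequality.

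It then remains to convert the weighted sum of entropies into the integral $\int -\log\mu(P^{F(x)}(x))\,d\mu(x)$ with a correction of at most $H_\mu(F)$. I would introduce the genuine partition $\mathcal P$ of $X$ whose restriction to each atom $X_G$ equals the trace of $P^G$ on $X_G$; since $\mathcal P$ refines $\mathcal F$, the standard conditional entropy identity gives
\[
\sum_G \mu(X_G)\,H_{\mu|_{X_G}/\mu(X_G)}(P^G) \;=\; H_\mu(\mathcal P\mid\mathcal F) \;=\; H_\mu(\mathcal P) - H_\mu(F).
\]
The atom of $\mathcal P$ containing $x$ is $\mathcal P(x) = P^{F(x)}(x)\cap X_{F(x)}\subseteq P^{F(x)}(x)$, so $H_\mu(\mathcal P)=\int -\log\mu(\mathcal P(x))\,d\mu(x) \geq \int -\log\mu(P^{F(x)}(x))\,d\mu(x)$, and plugging this in yields the claim. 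The only delicate point is notational: since $F(x)$ depends on $x$, the family $\{P^{F(x)}(x)\}_{x\in X}$ is not itself a partition of $X$, and one must work with the auxiliary partition $\mathcal P$; the one-sided inclusion $\mathcal P(x)\subseteq P^{F(x)}(x)$ is precisely what generates the additive penalty $H_\mu(F)$.
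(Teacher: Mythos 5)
Your proposal is correct and follows essentially the same route as the paper: condition on the level-set partition of $F$, apply Lemma \ref{lem:Miss} atom by atom, use concavity of static entropy for the convex decomposition of $\mu^F$, and recover the $-H_\mu(F)$ penalty from the conditional-entropy identity together with the one-sided inclusion of the atom of the auxiliary partition (your $\mathcal P$, the paper's $P^F$) inside $P^{F(x)}(x)$. The only difference is cosmetic (you run the chain of inequalities in the opposite direction and note that $H_\mu(\mathcal P\mid\mathcal F)=H_\mu(\mathcal P)-H_\mu(F)$ is an equality, where the paper only needs $\leq$).
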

\begin{proof}We recall that for a given $x\in X$,  the set $P^{F(x)}(x)$ denotes the atom of the iterated partition $P^{F(x)}$ which contains $x$.  The collection of sets $P^{F(x)}(x)$, $x\in X$,  does not a priori form a partition of $X$.    We denote also by $F$ the partition  associated to the distribution of $F$ and by $P^F$ the partition finer than $F$ whose atoms are the sets of the form $\{x\in X, \   F(x)=E \text{ and } x\in A\}$ for a subset of integers $E$ and an atom $A$ of $P^E$.  Then the atom $P^F(x)$ of $P^F$ containing $x$  is a subset of  $P^{F(x)}(x)$,  therefore 
\begin{align*}
\int-\log \mu\left(P^{F(x)}(x)\right) \, d\mu(x)& \leq 
\int-\log \mu\left(P^{F}(x)\right) \, d\mu(x),\\
&\leq H_\mu(P^F). 
\end{align*}
  We let $F_E$ be the atom of $F$ given by $F_E:=\{x\in X, \ F(x)=E\}$.   By conditioning with respect to $F$ we get with $\mu_{F_E}:=\frac{\mu(F_E\cap \cdot)}{\mu(F_E)}$ 

\begin{align*}
 H_\mu(P^F)& \leq H_{\mu}(P^F|F)+H_{\mu}(F), \\
& \leq\sum_E \mu(F_E)H_{\mu_{F_E}}(P^E)+H_{\mu}(F).
\end{align*}
By applying Lemma \ref{lem:Miss} to each $E$ and $\mu_{F_E}$ we get for any $m$:
\begin{align}\label{eins}
\int-\log \mu\left(P^{F(x)}(x)\right) \, d\mu(x)& \leq\sum_E  \mu(F_E) \sharp E\frac{H_{\mu_{F_E}^{E}}(P^m)}{m}+3m\mu(F_E)\sharp \partial E \log \left(\sharp P\right)+H_{\mu}(F).
\end{align}
Observe now that $\left( \int \sharp F(x)\, d\mu(x)\right)\, \mu^F=\sum_E\left(\mu(F_E) \sharp E \right)\, \mu_{F_E}^{E}$ so that we obtain by concavity of the static entropy in the measure 
\begin{align}\label{zwei} \sum_E  (\mu(F_E) \sharp E) \, H_{\mu_{F_E}^{E}}(P^m)\leq \left( \int \sharp F(x)\, d\mu(x)\right) H_{\mu^F}(P^m).
\end{align}
One easily concludes the  proof by combining the above inequalities (\ref{eins}) and (\ref{zwei}). 
\end{proof}

\section{$\phi$-hyperbolic empirical measures}
We consider in this section general topological systems $(X,T)$ with a continuous observable $\phi:X\rightarrow \mathbb R$.  We introduce different notions of hyperbolic times with respect to $\phi$ at  $x\in X$,  whose associated asymptotic densities  are related with  the  ergodic components $\nu$ of the  limit empirical measures $\mu=\lim_n\mu_x^n$ with $\int \phi \, d\nu >0$  or $\int \phi\,d\nu\geq 0$ (see Lemma \ref{compa} and Lemma \ref{equiv} below).\\

  For a subset $E$ of $\mathbb N$ and $M\in \mathbb N$ we let  $E\langle M \rangle=\{k\in \mathbb N, \ \exists l, m \in E \text{ with } l\leq k<m \text{ and } m-l\leq M\}$.  The frequency of $E$ in $[1,n]$ is denoted by 
  $d_n(E)=\frac{\sharp E\cap [1,n]}{n}$. Then we consider the usual  upper and lower asymptotic density,  $\overline{d}(E):=\limsup_n d_n(E)$ and $\underline{d}(E)=\liminf_n d_n(E)$.  Observe that $E\langle M \rangle \subset E(M)$ and  one easily checks  for $N\geq M$
  \begin{equation}\label{eq:angle}
  \forall n\geq 1, \  d_n\left(E(M)\setminus E\langle N \rangle\right)\leq \frac{M}{n}+\frac{M}{N}.
   \end{equation}
   A connected component of $E$ is a maximal interval of integers contained in $E$. 
\subsection{Hyperbolic times}
We first recall the standard notion of hyperbolic times introduced in the field of smooth dynamical systems in the works  of J. Alves \cite{alv, ABV}.  
 Let $\delta>0$ and $a=(a_n)_{\mathbb N \ni n< N}$, $\mathrm N\in \mathbb N\cup \{\infty\}$ be a sequence of  finite\footnote{In this case we index the sequence with the $\mathrm N$ first non negative integers, but we may similarly consider sequences indexed by any interval of integers in $\mathbb N$. } or infinite real numbers. A positive  integer $p<\mathrm N$ is said to be a \textbf{$\delta$-hyperbolic  time }w.r.t. $a=(a_n)_n$  when $\sum_{k\leq l < p}a_l\geq (p-k)\delta$ for all $k=0,\cdots, p-1$.    We let $E_{a}^\delta$  be  the set of $\delta$-hyperbolic times w.r.t. $a=(a_n)_n$.  \\
 
 We define now a weaker notion.  For $M\in \mathbb N^*$,  the integer $p<\mathrm N$ is said to be a \textbf{$(\delta,M)$-weakly hyperbolic  time} w.r.t. $(a_n)_{n\leq N}$  when  $\sum_{k\leq l < p}a_l\geq (p-k)\delta$ for $k=\max(p-M,0),\cdots,  p-1$.
  We denote by $F_{a}^{\delta,M}$    the set of $(\delta,M)$-weakly hyperbolic times w.r.t. $a=(a_n)_n$.  Clearly $E_{a}^\delta=\bigcap_{M}F_{a}^{\delta,M}$. \\
  
   The set of $(\delta,M)$-weakly hyperbolic  times is a priori not nondecreasing in $M$. To overcome this difficulty we will work with the $\delta/2$-hyperbolic times of the connected components of  $F_{a}^{\delta,M}(M)$.  More precisely we introduce the set $G_a^{\delta,M}$ of  \textbf{$(\delta,M)$-midly hyperbolic times} defined as follows.  For a subset $E$ of $\mathbb N$ we write  $a_E:= 
 (a_k)_{E\ni k<\mathrm N}$. Then we put   $G_a^{\delta,M}:=\bigcup_{I}E_{a_I}^{\delta/2}$, 
   where $I$ runs over the connected components of $F_{a}^{\delta,M}(M)$.  Observe that for such an interval of integers $I$, we have $\sum_{k\in I}a_k\geq\delta\sharp I$. In particular if $\mathrm N=+\infty$ and  $\|a\|_{\infty }=\sup_k |a_k|<+\infty$ then  it follows from  Pliss Lemma (see e.g. \cite{ABV}) that for some $\alpha>0$  and for $M$ large enough both depending only on $\delta$ and $\|a\|_{\infty }$, we have \begin{equation}\label{Pliss}\overline{d}\left(G_a^{\delta,M}\right)\geq \alpha \overline{d}\left(F_{a}^{\delta,M}(M)\right).\end{equation}
We also let    $G_a^{\delta,M}\left(\left(N\right)\right)=G_a^{\delta,M}(N)\cap F_{a}^{\delta,M}(M)$ for any $M,N\in \mathbb N^*$.
   \\

In the next subsections   we consider  a general  invertible topological system $(X,T)$ together with a continuous  real observable $\phi:X\rightarrow \mathbb R$.  For $x\in X$ we denote the sequence $\left(\phi(T^nx)\right)_{n\in \mathbb N}$ by $\Phi_x$.   
 Moreover for any interval of integers $I$  we let $\Phi_x^I$ be the finite sequence $(\phi(T^kx))_{k\in I}$.
We  also consider the following Birkhoff sums w.r.t. $\phi$ at  $x\in X$ :
$$\forall n \in \mathbb N, \ \phi_n(x):=\sum_{0\leq k<n}\phi(T^kx),$$
$$\overline{\phi_*}(x):=\limsup_n\frac{\phi_n(x)}{n}.$$ 
When the limit exists we just write $\phi_*(x)=\overline{\phi_*}(x)$. This is the case for  typical $\mu$-points $x$ of any $T$-invariant measure $\mu$ by Birkhoff's ergodic theorem.  Moreover, when $\mu$ is ergodic  we have $\phi_*(x)=\int \phi\,d\mu$ for $\mu$-a.e. $x$.

\subsection{Hyperbolic empirical measures}
Following \cite{BCS,BCS1} we firstly consider here empirical limits with respect to the set of hyperbolic times  $E=E^\delta_{\Phi_x}$ for some $\delta>0$  and $x\in X$. 
 
\begin{lem}\label{one}Let $\delta>0$ and let $\mathfrak n\subset \mathbb N$ be a infinite subsequence  of integers.  Let $(\xi_n)_{n \in \mathfrak n}$  be a sequence of probability measures on $X$ such that 
 $\zeta_n^{M}:=\int\mu_x^{n}\left[ \overline{E^\delta_{\Phi_x}(M)}\right] \, d\xi_n(x)$  are converging,  when $n\in \mathfrak n$ goes to infinity,  to  $\zeta^{M}$  for any $M\in\mathbb N^*$.  We let $\zeta$ be the nonincreasing limit in $M$ of $(\zeta^M)_M$. 

Then we have 
$\int \phi\, d\zeta\leq \zeta(X)\delta.$

\end{lem}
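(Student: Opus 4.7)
The plan is to establish the pointwise bound
$\sum_{k\in\overline{E^\delta_{\Phi_x}(M)}\cap[0,n)}\phi(T^kx)\le\delta\cdot\sharp\bigl(\overline{E^\delta_{\Phi_x}(M)}\cap[0,n)\bigr)$
for every $x$ and $n$, then integrate against $\xi_n$ and pass to the limit first in $n$ and then in $M$. Writing $E:=E^\delta_{\Phi_x}=\{p_0<p_1<\cdots\}$, the identity $E(M)=\{k:k+m\in E\text{ for some }1\le m\le M\}$ shows that $\overline{E(M)}$ decomposes as a disjoint union of intervals: an initial block $[0,p_0-M-1]$ (nonempty only when $p_0>M$), a block $[p_i,p_{i+1}-M-1]$ for each $i$ with $p_{i+1}-p_i>M$, and an infinite tail $[p_L,+\infty)$ if $E$ is finite with maximum $p_L$. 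Intersecting with $[0,n)$ may truncate the rightmost block. In particular, every connected component $[a,b]$ of $\overline{E(M)}\cap[0,n)$ satisfies: no integer of $(a,b+1]$ belongs to $E$, and either $a=0$ or $a\in E$.

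The core step is the pointwise estimate $\phi_{b-a+1}(T^ax)\le(b-a+1)\,\delta$ for every such component $[a,b]$, proved by induction on $j\in\{1,\ldots,b-a+1\}$ of the statement $\phi_j(T^ax)<j\delta$. Since $a+j\notin E$, by definition of hyperbolic time there is a witness $k\in\{0,\ldots,a+j-1\}$ with $\phi_{a+j-k}(T^kx)<(a+j-k)\delta$. If $k<a$, this forces $a>0$ and hence $a\in E$, so $\phi_{a-k}(T^kx)\ge(a-k)\delta$; subtracting gives $\phi_j(T^ax)<j\delta$. If $k\ge a$, write $k=a+k'$ with $k'\in\{0,\ldots,j-1\}$ and split $\phi_j(T^ax)=\phi_{k'}(T^ax)+\phi_{j-k'}(T^{a+k'}x)$: the second summand is $<(j-k')\delta$ by the choice of $k$, and the first is $<k'\delta$ by the induction hypothesis (or trivially zero if $k'=0$), yielding $\phi_j(T^ax)<j\delta$. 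Summing the resulting bound over all components of $\overline{E(M)}\cap[0,n)$ and dividing by $n$ gives $\int\phi\,d\mu_x^n\bigl[\overline{E^\delta_{\Phi_x}(M)}\bigr]\le\delta\,\mu_x^n\bigl[\overline{E^\delta_{\Phi_x}(M)}\bigr](X)$.

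Integrating against $\xi_n$ yields $\int\phi\,d\zeta_n^M\le\delta\,\zeta_n^M(X)$. The hypothesized weak-$*$ convergence $\zeta_n^M\to\zeta^M$, together with the continuity and boundedness of $\phi$ on the compact space $X$, passes the inequality through $n\to\infty$ to produce $\int\phi\,d\zeta^M\le\delta\,\zeta^M(X)$. Since $\zeta^M\downarrow\zeta$ as finite measures, $\zeta^M(X)\downarrow\zeta(X)$ and $\bigl|\int\phi\,d(\zeta^M-\zeta)\bigr|\le\|\phi\|_\infty(\zeta^M(X)-\zeta(X))\to 0$, so letting $M\to\infty$ delivers $\int\phi\,d\zeta\le\delta\,\zeta(X)$. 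The main obstacle is the inductive pointwise bound: the dichotomy on the position of the witness $k$ reflects that the failure of $a+j$ to be $\delta$-hyperbolic is witnessed either by a bad Birkhoff average extending strictly before $a$ (cancelled by the hyperbolicity of $a$) or by one entirely contained in the current gap (absorbed by the induction hypothesis). Once this bound is in place, the remaining steps are a standard weak-$*$ continuity argument.
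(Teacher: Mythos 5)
Your proof is correct, and it follows a genuinely different (and slightly cleaner) route than the paper's. The key pointwise step you prove --- that on each connected component $[a,b]$ of $\overline{E^\delta_{\Phi_x}(M)}\cap[0,n)$ one has $\phi_j(T^ax)<j\delta$ for $1\le j\le b-a+1$, by induction on $j$ using that the left endpoint $a$ is either $0$ or itself a $\delta$-hyperbolic time --- plays exactly the role of the ``neutral block'' property that the paper imports from \cite{BCS} for the gaps $[k,l[$ between consecutive hyperbolic times. Where you differ: the paper first passes from $E^\delta_{\Phi_x}(M)$ to the smaller set $E^\delta_{\Phi_x}\langle M\rangle$ (justified by the density estimate $d_n(E(M)\setminus E\langle N\rangle)\le M/n+M/N$), works with the two-sided bound $\lvert\phi_{l-k}(T^kx)-(l-k)\delta\rvert\le\|\phi\|_\infty$ on complete gaps, and collects the boundary contributions into an $O(1/M+1/n)$ error that vanishes in the limit. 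You instead identify the connected components of $\overline{E^\delta_{\Phi_x}(M)}$ explicitly (initial segment $[0,p_0-M-1]$, post-gap blocks $[p_i,p_{i+1}-M-1]$, possible infinite tail), prove the one-sided pointwise bound on each, and so obtain the exact inequality $\int\phi\,d\zeta_n^M\le\delta\,\zeta_n^M(X)$ for every finite $n$ and $M$, with no error term; the weak-$*$ and monotone-limit steps are then immediate. Your version is more elementary and self-contained and fully establishes the lemma. The only thing the paper's two-sided formulation buys beyond this is the ingredient for Remark \ref{poli}, where the reverse inequality (hence equality $\int\phi\,d\zeta=\zeta(X)\delta$) is needed; your one-sided argument does not by itself give that, but it is not required for the lemma as stated.
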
  
\begin{proof}
By Equation (\ref{eq:angle}),   we may replace $E^\delta_{\Phi_x}(M)$ by $E^\delta_{\Phi_x}\langle M\rangle$ in the definition of $\zeta_n^{M}$ and the limit $\zeta$ will be the same.  Now,  when $0\leq k<l$ are two consecutive times in $E^\delta_{\Phi_x}$, then $[k,l[$ is a \textit{neutral block} as defined in \cite{BCS}, i.e. $\phi_m(T^k x)< m\alpha$ for all $1\leq m < \cdots <l-k$.  From $\phi_{l-k-1}(T^k x)< (l-k-1)\delta$ and  $\phi_{l-k}(T^kx)\geq (l-k)\delta$ we get:% with $\|df^{\pm}\|=\max(\|df\|, \|df^{-1}\|)$:
\begin{equation}\label{eq:connected}
|\phi_{l-k}(T^k x)-(l-k)\delta|\leq \|\phi\|_{\infty}.
\end{equation}
When $n$ belongs to $[k,l)$, we just have  
\begin{equation}\label{eq:connectedd}\phi_{n-k}(T^kx)<(n-k)\delta.
\end{equation} When $l-k>M$, then $[k,l[$ is a connected component  of $\overline{E^\delta_{\Phi_x}\langle M\rangle }$. In particular the number of such component $[k,l[$ with $[k,l[\cap [0,n[\neq \emptyset$ is less than or equal to $\frac{n}{M}+1$. 
Therefore by summing (\ref{eq:connected}) over the intervals  $[k,l[\cap [0,n[$ and (\ref{eq:connectedd}) we get 
$$\int \phi\,  d\zeta_n^{M}-\zeta_n^{M}(X)\delta\leq \|\phi\|_{\infty}\left(  \frac{1}{M}+\frac{1}{n}\right),$$ therefore by taking the limit in $n$ then in $M$ we conclude that \begin{align*}
\int \phi\, d\zeta&=\lim_{M}\lim_n\int \phi\, d\zeta_n^{M},\\
&\leq \zeta(X)\delta.
\end{align*}
\end{proof}

\begin{rem}\label{poli}When moreover   $n\in E^\delta_{\Phi_x}$ for $\xi_n$-a.e., then $\int \phi\, d\zeta=\zeta(X)\delta.$ In this case  any  connected component  $[k,l[$ of $\overline{E^\delta_{\Phi_x}\langle M\rangle} $  either lies in $[0,n)$ or are disjoint of $[0,n)$. Therefore by summing (\ref{eq:connected}) over the connected component  $[k,l[$ of $\overline{E^\delta_{\Phi_x}\langle M\rangle }$  inside $[0,n)$ we get 
$\left|\int \phi\,  d\zeta_n^{M}-\zeta_n^{M}(X)\delta\right|\leq  \frac{\|\phi\|_{\infty}}{M},$ then $\int \phi\, d\zeta=\lim_{M}\int \phi\, d\zeta^{M}=\zeta(X)\delta.$
\end{rem}

 The  lower  asymptotic density of hyperbolic times is defined as follows:
$$\underline{d}_\phi(x):=\lim^\nearrow_{\delta\rightarrow 0}\lim^{\nearrow}_{M\rightarrow +\infty}\underline{d}(E^{\delta}_{\Phi_x}(M)).$$

The next lemma,  which follows essentially from \cite{BCS}, illustrates how full density of hyperbolic times at $x$ is reflected on the measures in $pw(x)$.  

\begin{lem}\label{compa}\cite{BCS}
The following properties are equivalent :
\begin{enumerate}
\item $\underline{d}_\phi(x)=1$,
\item for any $\mu\in pw(x)$ we have $\phi_*>0$ $ \mu$-a.e.
\end{enumerate}
\end{lem}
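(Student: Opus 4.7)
I will prove each implication by contraposition, translating the a.e.\ pointwise condition on $\phi_*$ into one about invariant components of measures in $pw(x)$. For $\mu \in \mathcal M(X,T)$ the assertion ``$\phi_* > 0$ $\mu$-a.e.'' is equivalent to: every nonzero $T$-invariant component $\nu \le \mu$ satisfies $\int \phi\,d\nu > 0$ (with the test component being $\mu_0 := \mu|_{\{\phi_* \le 0\}}$, which is invariant and satisfies $\int \phi\,d\mu_0 = \int \phi_*\,d\mu_0 \le 0$).

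For $(1)\Rightarrow(2)$ by contraposition, suppose some $\mu = \lim_k \mu_x^{n_k} \in pw(x)$ has $w := \mu(\{\phi_* \le 0\}) > 0$, so $\mu_0 := \mu|_{\{\phi_* \le 0\}}$ is $T$-invariant of mass $w$ with $\phi_* \le 0$ $\mu_0$-a.e. Fix $\delta,\varepsilon > 0$. Egorov applied to the $\mu_0$-a.e.\ convergence $\phi_n/n \to \phi_* \le 0$ gives $N_0$ with $\mu_0\bigl(\bigcap_{n \ge N_0}\{\phi_n/n < \delta\}\bigr) \ge w-\varepsilon$. For $N_1 \ge N_0$ the finite intersection $C := \bigcap_{N_0 \le n \le N_1}\{\phi_n/n < \delta\}$ is open, so portmanteau gives $\liminf_k \mu_x^{n_k}(C) \ge \mu(C) \ge w-\varepsilon$. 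The pointwise step is: if $T^j x \in C$ then $\phi_n(T^j x) < n\delta$ for $n \in [N_0, N_1]$, so $j+n \notin E^{2\delta}_{\Phi_x}$ for such $n$ (since membership in $E^{2\delta}_{\Phi_x}$ would force $\phi_n(T^j x) \ge 2n\delta$), whence $j + N_0 - 1 \in \overline{E^{2\delta}_{\Phi_x}(N_1 - N_0 + 1)}$. Counting indices gives $\mu_x^{n_k}(C) \le (1+N_0/n_k)\,d_{n_k + N_0}\bigl(\overline{E^{2\delta}_{\Phi_x}(M)}\bigr)$ with $M := N_1 - N_0 + 1$, and passing to the limit yields $\overline d\bigl(\overline{E^{2\delta}_{\Phi_x}(M)}\bigr) \ge w - \varepsilon$. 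Letting $N_1 \to \infty$, then $\varepsilon \to 0$, then $\delta \to 0$ gives $\underline d_\phi(x) \le 1 - w < 1$.

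For $(2)\Rightarrow(1)$ by contraposition, assume $\underline d_\phi(x) < 1$ and pick $c > 0$ with $\overline d\bigl(\overline{E^\delta_{\Phi_x}(M)}\bigr) \ge c$ for every $\delta,M$. Fix $\delta > 0$. A diagonal extraction choosing $n_k$ with $d_{n_k}(\overline{E^\delta_{\Phi_x}(k)}) \ge c - 1/k$, followed by further sub-extraction for weak-$*$ convergence, produces $\mu^\delta := \lim_k \mu_x^{n_k} \in pw(x)$ and, for each $M$, a limit $\zeta^{\delta,M} := \lim_k \mu_x^{n_k}[\overline{E^\delta_{\Phi_x}(M)}]$ with $\zeta^{\delta,M}(X) \ge c$. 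Setting $\zeta^\delta := \lim_M \zeta^{\delta,M}$ (decreasing in $M$), Lemma~\ref{one} gives $\zeta^\delta(X) \ge c$ and $\int \phi\,d\zeta^\delta \le \zeta^\delta(X)\,\delta$. The $T$-invariance of $\zeta^\delta$ follows by substituting $\langle M\rangle$ for $(M)$ via (\ref{eq:angle}): the complement $\overline{E^\delta_{\Phi_x}\langle M\rangle}$ is a union of long gaps of length $>M$ between consecutive hyperbolic times, so it contains at most $n/M$ boundary points in $[0,n)$, giving $\mathfrak d(T_*\zeta^{\delta,M}, \zeta^{\delta,M}) = O(1/M) \to 0$. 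A final extraction along $\delta_j \to 0$ yields $\mu \in pw(x)$ and a $T$-invariant $\zeta \le \mu$ with $\zeta(X) \ge c$ and $\int \phi\,d\zeta \le 0$; Birkhoff's theorem applied to $\zeta$ then gives $\int \phi_*\,d\zeta = \int \phi\,d\zeta \le 0$, so $\zeta(\{\phi_* \le 0\}) > 0$ and hence $\mu(\{\phi_* \le 0\}) > 0$.

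The main obstacle is establishing $T$-invariance of $\zeta^\delta$ in the second direction: the measures $\mu_x^n[\overline{E^\delta_{\Phi_x}(M)}]$ are not manifestly invariant, and only by replacing $(M)$ with $\langle M\rangle$ via (\ref{eq:angle}) does the shift-error become controllable as $O(1/M)$, vanishing in the double limit. A secondary subtlety in the first direction is the $\delta$-to-$2\delta$ slack, required because short-time Birkhoff averages may exceed $\delta$ even at $\mu_0$-typical points; this is absorbed by sending $\delta \to 0$ at the end.
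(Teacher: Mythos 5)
Your proof is correct in substance, and one of the two implications is handled by a genuinely different route than the paper's. For $(2)\Rightarrow(1)$, your argument (by contraposition) is essentially the same as the paper's: extract along a subsequence where the density of $E^\delta_{\Phi_x}(M)$ stays away from $1$, pass to the limit $\zeta^\delta$ of $\mu_x^{n_k}[\overline{E^\delta_{\Phi_x}(M)}]$, apply Lemma~\ref{one}, and let $\delta\to 0$. For $(1)\Rightarrow(2)$, however, the paper does not argue directly; it cites~\cite{BCS} (or, equivalently, routes through Remark~\ref{rem:nouvpr}, which rests on Lemma~\ref{nouv} and the weakly hyperbolic sets $F^{\delta,M}_{\Phi_x}$) to assert $\phi_*>0$ on the hyperbolic-time limit component $\nu$. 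Your contrapositive argument --- Egorov on the invariant component $\mu|_{\{\phi_*\le 0\}}$, portmanteau applied to the open set $C=\bigcap_{N_0\le n\le N_1}\{\phi_n/n<\delta\}$, and the pointwise counting that $T^jx\in C$ forces $j+N_0-1\notin E^{2\delta}_{\Phi_x}(N_1-N_0+1)$ --- is self-contained and avoids the $F^{\delta,M}$ machinery entirely, which is a cleaner and more elementary way to close that direction. (Incidentally, the $\delta$-to-$2\delta$ slack you flag is unnecessary: $\phi_n(T^jx)<n\delta$ already violates the $k=j$ test in the definition of $E^\delta_{\Phi_x}$, so $j+n\notin E^\delta_{\Phi_x}$ directly.)

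One step is misjustified, though the conclusion survives. To prove $T$-invariance of $\zeta^\delta$ you invoke ``substituting $\langle M\rangle$ for $(M)$ via (\ref{eq:angle})'' and then argue with the long gaps of $\overline{E^\delta_{\Phi_x}\langle M\rangle}$. But (\ref{eq:angle}) only controls $E(M)\setminus E\langle N\rangle$; the reverse difference $E\langle N\rangle\setminus E(M)$ is not small in density (it fills the left-hand portion of each medium gap), so $\mu_x^n[\overline{E^\delta_{\Phi_x}(M)}]$ and $\mu_x^n[\overline{E^\delta_{\Phi_x}\langle M\rangle}]$ do not have the same limit, and the substitution changes the measure. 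The needed fact is simpler and is exactly what the paper records just after introducing $E(M)$: since $E^\delta_{\Phi_x}(M)=\bigcup_{e\in E^\delta_{\Phi_x}}[e-M,e-1]$ is a union of intervals of length at least $M$, it has at most $n/M+1$ connected components in $[0,n)$, so $\mu_x^n[E^\delta_{\Phi_x}(M)]$ is $M$-almost invariant; then $\zeta^{\delta,M}=\mu^\delta-\lim_k\mu_x^{n_k}[E^\delta_{\Phi_x}(M)]$ is $O(1/M)$-almost invariant as the difference of an invariant measure and an $M$-almost invariant one, and $\zeta^\delta=\lim_M\zeta^{\delta,M}$ is invariant. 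You should replace the $\langle M\rangle$ detour with this direct observation.
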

\begin{proof}[Sketch of Proof]
Let $\mu\in pw(x)$ and let $\mathfrak n$   be a subsequence of positive integers with $\mu=\lim_{\mathfrak n\ni n \rightarrow +\infty}\mu_x^n$.  By a Cantor diagonal argument we may assume $\left(\mu_x^{n}[E^\delta_{\Phi_x}(M)] \right)_{n\in \mathfrak n}$ is converging for any $M\in \mathbb N^*$ and $\delta\in \mathbb Q$ to some $\mu_{M,\delta}$.  The measures $\mu_{M,\delta}$ are nondecreasing in $M$ and $\delta$, when    $M$ goes to infinity and $\delta$ to $0$.  The limit $\nu$  is a $T$-invariant component of $\mu$ with $\nu(X)\geq \lim_\delta\lim_M\lim_{n\in \mathfrak n}d_n\left(E^{\delta}_{\Phi_x}(M)\right)\geq \underline{d}_\phi(x)$. Moreover 
$\phi_*>0$ $\nu$-a.e.  (see e.g. \cite{BCS} or Remark  \ref{rem:nouvpr} below).  By applying 
Lemma \ref{one} 
with $\xi_n=\delta_x$ for all $n$,  the limit $\zeta$   is just the difference $\mu-\nu$, therefore $\int \phi \,d(\mu-\nu)\leq 0$. 

We explain now the equivalence of $(1)$ and $(2)$.
\begin{itemize}
\item $(1)\Rightarrow (2)$: If $\underline{d}_\phi(x)=1$, then the component $\nu$ of $\mu$ is a probability, therefore $\nu=\mu$ and $\phi_*>0$ $\mu$-a.e. 
\item $(2)\Rightarrow (1)$: there are sequences $(\delta_k)_k$, $(M_k)_k$ and $\mathfrak n=(\mathfrak n_k)_k$ such that 
\begin{itemize}
\item $(\delta_k)_k$ is decreasing to zero,
\item $(M_k)_k$ and $\mathfrak n=(\mathfrak n_k)_k$ are integer valued sequences going increasingly,
\item $\lim_k d_{\mathfrak n_k}\left(E^{\delta_k}_{\Phi_x}(M_k)\right)=\underline{d}_\phi(x).$
\end{itemize}
For any $\delta>0$ and for any $M\in \mathbb N^*$ we have 
$$\limsup_{n\in \mathfrak n}d_n\left(E^{\delta}_{\Phi_x}(M)\right) \leq \lim_k d_{\mathfrak n_k}\left(E^{\delta_k}_{\Phi_x}(M_k)\right))=\underline{d}_\phi(x).$$

By taking a subsequence we may assume that  $\mu_x^n$ is converging to $\mu \in pw(x)$ when $n\in \mathfrak n$ goes to infinity.  By hypothesis (2) we have $\phi_*(y)>0$ for  $\mu$-a.e.  $y$.  As $\mu-\nu$ is a component of $\mu$ with $\int \phi\, d(\mu-\nu)\leq 0$ we have necessarily $\nu=\mu$. In particular $1=\nu(X)=\lim_{\delta\to 0}\lim_{M\to +\infty}\lim_{n\in \mathfrak n}d_n\left(E^{\delta}_{\Phi_x}(M)\right)=\underline{d}_\phi(x)$.
\end{itemize}

\end{proof}

 We say  $\underline{d}^\phi=1$ uniformly on $\mathrm E\subset X$ when $\underline{d}^\phi(x)=1$ for all $x\in \mathrm E$ and the limits in 
$M$ and $\delta$ and the liminf in $n$ are uniform in $x\in \mathrm E$, i.e. for all $\epsilon>0$ there is $\delta_0$, $M_0$ and $n_0$ such that for any $\delta<\delta_0$, $M>M_0$ and $n>n_0$ we have 
$$\forall x\in \mathrm E, \ d_n\left(E_{\Phi_x}^{\delta}(M)\right)>1-\epsilon.$$
By Egorov theorem, if  $\lambda$ is some Borel probability measure on $X$ (e.g. the Lebesgue measure for a compact smooth manifold $X$ as in the next sections) and    $\underline{d}^\phi=1$ on a subset $\mathrm F$ of $X$,  then there is $\mathrm E\subset \mathrm F$ with $\lambda(\mathrm E)$ arbitrarily close to $\lambda(\mathrm F)$ such that  $\underline{d}^\phi=1$ uniformly on $\mathrm E$.

\subsection{Weakly hyperbolic empirical measures}
We deal now with empirical measures associated to weakly hyperbolic times,  which is 
the main new tool used in the present paper.

\begin{lem}\label{nouv}Let $\delta>0$ and let $\mathfrak n_q\subset \mathbb N$, $q\in \mathbb N$, be  infinite subsequences of integers.  Let $(\xi_n^q)_{q\in \mathbb N, \ n\in \mathfrak n_q}$  be probability measures such that 
$\nu_n^{M,q}:=\int\mu_x^{n}[F^{\delta,M}_{\Phi_x}(M)] \, d\xi_n^q(x)$ are converging,  when $n\in \mathfrak n_q$ goes to infinity,  to $\nu^{M,q}$  for any $M,q$.  

Then for any limit $\nu$ of the form $\nu=\lim_M\lim_{q\in \mathfrak q}\nu^{M,q}$ for some infinite subsequence $\mathfrak q$, we have  $\phi_*(x)\geq \delta$  for $\nu$ a.e.  $x$.
\end{lem}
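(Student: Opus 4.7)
The plan is to establish a pointwise lower bound on Birkhoff sums of $\phi$ along weakly hyperbolic times, then integrate it to obtain an average inequality and upgrade to the $\nu$-a.e.\ statement via an ergodic decomposition argument.

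First, each $\nu_n^{M,q}$ is $M$-almost invariant (as $F^{\delta,M}_{\Phi_x}(M)$ has the form $E(M)$, cf.\ the observation after Lemma~\ref{lem:Miss}), so $\nu^{M,q}$ is $M$-almost invariant and $\nu$ is $T$-invariant. By Birkhoff, $\phi_*(x)=\lim_n\phi_n(x)/n$ exists $\nu$-a.e.

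The key step is the pointwise inequality: for every $x\in X$ and every $n\geq 1$,
\[
\sum_{k\in F^{\delta,M}_{\Phi_x}(M)\cap[0,n)}\phi(T^kx)\;\geq\;\delta\cdot\bigl|F^{\delta,M}_{\Phi_x}(M)\cap[0,n)\bigr|\;-\;C_M,
\]
with $C_M=M(\delta+\|\phi\|_\infty)$. I decompose $F^{\delta,M}_{\Phi_x}(M)\cap[0,n)$ into its maximal intervals $[a_s,b_s)$. For each such interval with $b_s<n$, the endpoint $b_s$ itself must lie in $F^{\delta,M}_{\Phi_x}$: the condition $b_s-1\in F^{\delta,M}_{\Phi_x}(M)$ yields some $p\in F^{\delta,M}_{\Phi_x}\cap(b_s-1,b_s-1+M]$, and if $p>b_s$ then $b_s$ would lie in $F^{\delta,M}_{\Phi_x}(M)$, contradicting the maximality of $[a_s,b_s)$; hence $p=b_s$. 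Iterating this observation (always moving to the next weakly hyperbolic time, which again lies $\leq b_s$ by the same maximality argument) produces a chain $a_s<p_1<\cdots<p_L=b_s$ in $F^{\delta,M}_{\Phi_x}$ with $p_{i+1}-p_i\leq M$; the weakly hyperbolic property at each $p_{i+1}$ gives $\phi_{p_{i+1}-p_i}(T^{p_i}x)\geq(p_{i+1}-p_i)\delta$, and telescoping yields $\phi_{b_s}(x)-\phi_{a_s}(x)\geq(b_s-a_s)\delta$. The single component touching $n$ contributes an error at most $C_M$.

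Dividing by $n$, integrating against $\xi_n^q$, and taking successive limits ($n\to\infty$ along $\mathfrak n_q$, then $q\to\infty$ along $\mathfrak q$, then $M\to\infty$) yields $\int\phi\,d\nu\geq\delta\nu(X)$. To upgrade to the $\nu$-a.e.\ conclusion, I apply the same argument with $\xi_n^q$ replaced by $\xi_n^q|_A$ for a $T$-invariant Borel subset $A$; since $A$ is invariant, $\nu_n^{M,q}|_A=\int\mu_x^n[F^{\delta,M}_{\Phi_x}(M)]\,d\xi_n^q|_A(x)$, and a subsequential diagonal extraction produces a $T$-invariant sub-measure $\nu_A\leq\nu$ supported on $\overline A$ with $\int\phi\,d\nu_A\geq\delta\nu_A(X)$. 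Taking $A$ to be the support of a typical ergodic component $\eta$ of $\nu$ with $\int\phi\,d\eta<\delta-\epsilon$ --- a closed $T$-invariant set --- Birkhoff applied to $\nu_A$ gives $\int\phi\,d\nu_A\leq(\delta-\epsilon)\nu_A(X)$ (the boundary term from $\overline A\setminus A$ vanishes since $A$ is closed), which combined with the integral inequality forces $\nu_A(X)=0$. By an inner regularity argument together with the ergodic decomposition of $\nu$, one then deduces $\nu(\{x:\phi_*(x)<\delta-\epsilon\})=0$ for every $\epsilon>0$; sending $\epsilon\to 0$ concludes.

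The hardest step will be the chain argument in the pointwise inequality, specifically the forcing $p=b_s$ by maximality of the connected component, which is what makes the telescoping collapse to $(b_s-a_s)\delta$; any uncontrolled slack would break the bound. A secondary subtlety in the $\nu$-a.e.\ upgrade is the passage from the sub-measure $\nu_A$ (supported on the closed invariant $A$) back to $\nu(\{\phi_*<\delta-\epsilon\})$, which is handled via inner regularity of $\nu$ combined with its ergodic decomposition.
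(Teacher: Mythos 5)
The first half of your argument --- the chain decomposition of the connected components of $F^{\delta,M}_{\Phi_x}(M)$ into consecutive weakly hyperbolic times $a_s<p_1<\cdots<p_L=b_s$ with $p_{i+1}-p_i\le M$, the telescoping of the weakly hyperbolic inequality, and the $O(M)$ truncation error --- is correct, and it does deliver the integral inequality $\int\phi\,d\nu\geq\delta\,\nu(X)$. But the lemma claims $\phi_*\geq\delta$ \emph{$\nu$-a.e.}, which is strictly stronger, and your upgrade to the a.e.\ statement has a genuine gap. You take $A$ to be the topological support of an ergodic component $\eta$ of $\nu$ with $\int\phi\,d\eta<\delta-\epsilon$, and you then assert that Birkhoff applied to the localized limit $\nu_A\le\nu$ (a $T$-invariant measure carried by $A$) gives $\int\phi\,d\nu_A\leq(\delta-\epsilon)\,\nu_A(X)$. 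This does not follow: the support of one ergodic component typically carries many other invariant measures, so $\nu_A$ may have ergodic components with $\int\phi$ arbitrary, and there is no reason its $\phi$-average is controlled by that of $\eta$. The obvious replacement $A=\{\phi_*<\delta-\epsilon\}$ is invariant but not closed, so the weak-$*$ passage to $\nu_A$ no longer localizes to $A$, and the proposed inner-regularity fix runs into the same obstruction (compact approximations of $\{\phi_*<\delta-\epsilon\}$ are not invariant, so $\nu_n^{M,q}\vert_A$ and $\int\mu_x^n[F^{\delta,M}_{\Phi_x}(M)]\,d\xi_n^q\vert_A$ no longer agree).

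The paper avoids computing Birkhoff averages entirely and instead localizes the \emph{support} of $\nu$. It introduces the compact sets $K_M=\{x:\ \phi_m(x)\geq m\delta\ \text{for some}\ 1\le m\le M\}$ and the nested variant $H^{\delta,M}_{\Phi_x}(M)=\bigcup_{N\le M}F^{\delta,N}_{\Phi_x}(N)$ (which restores the monotonicity in $M$ that $F^{\delta,M}_{\Phi_x}(M)$ lacks). The observation that $T^lx\in K_M$ whenever $l\in H^{\delta,M}_{\Phi_x}(M)$ shows that the auxiliary measures $\eta_n^{M,q}=\int\mu_x^n[H^{\delta,M}_{\Phi_x}(M)]\,d\xi_n^q$ are all supported on $K_M$; upper semicontinuity on closed sets then forces $\eta^{M,q}(K_M)=\eta^{M,q}(X)$, hence after the double limit $\nu(K)=\nu(X)$ with $K=\bigcup_M K_M$. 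Since $\nu$ is $T$-invariant, $\nu$ is carried by $\bigcap_{p}T^{-p}K$, and on that set a trivial chaining of the defining property of $K$ gives $\overline{\phi_*}\geq\delta$ pointwise --- which, combined with Birkhoff, yields $\phi_*\geq\delta$ $\nu$-a.e.\ with no recourse to ergodic decomposition or to localization on invariant Borel sets. If you want to keep your chain argument, you would need to convert it into a statement about the support of the limit measures rather than about integrals; as written, the integral inequality plus the localization step is not enough.
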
 

\begin{proof}
Let $K_M:=\{x\in X, \ \phi_m(x)\geq m\delta \text{ for some } M\geq m\geq 1\}$  and 
$K=\bigcup^\nearrow_{M\geq 1}K_M$.   For $n\in \mathfrak n_q$ we also let $\eta_{n}
^{M,q}:=\int\mu_x^{n}[H_{\Phi_x}^{\delta,M}(M)]\, d\xi_n^q(x)$  with $H_{\Phi_x}^{\delta,M}
(M)=\bigcup_{N\leq M}F_{\Phi_x}^{\delta,N}(N)$.  By using a Cantor diagonal argument we may assume the corresponding limits  
$\eta^{M,q}$, $M,q\in \mathbb N$,  exist when $n\in \mathfrak n_q$ goes to infinity.  Note that $\nu^{M,q}$ is a component of $\eta^{M,q}$.  As 
$K_M$ is a compact set of $X$,  we have $\eta^{M,q}(K_M)\geq \lim_n\eta_n^{M,q }(K_M)$, but 
it follows from the definition of $K_M$ that $\eta_n^{M ,q}(K_M)=\eta_n^{M,q }(X)$, therefore $
\eta^{M,q}(K_M) \geq  \lim_n\eta_n^{M,q }(K_M)=\eta^{M,q}(X)$.  Then  by replacing $\mathfrak q $ by a subsequence  we may assume the limit $\eta=\lim^{\nearrow}_M\lim_{q\in \mathfrak q } \eta^{M,q}$ is well defined. The measure $\nu$ is a component of $\eta$. Moreover we have  \begin{align*}
\eta(K)&=\lim^{\nearrow}_{N}\eta(K_N),\\
&=\lim^{\nearrow}_{N}\lim^{\nearrow}_{M}\lim_q\eta^{M,q}(K_N),\\
&\geq \lim^{\nearrow}_{N}\lim_q\eta^{N,q}(K_N),\\
&\geq \lim^{\nearrow}_{N}\lim_q\eta^{N,q}(X),\\
&\geq \eta(X).
\end{align*} In particular $\nu(K)=\nu(X)$.  
 As $\nu$ is invariant (a priori $\eta$ is not) we get $\nu\left(\bigcap_{p\in \mathbb N}T^{-p}K \right)=\nu(X)$ and one checks easily that $\overline{\phi_*}(x)\geq \delta$ for $x\in \bigcap_{p\in \mathbb N}T^{-p}K$.\end{proof}

\begin{rem}\label{rem:nouvpr}
With the notations of Lemma \ref{one} and Lemma \ref{nouv} we assume the limit $\mu=\lim_{n\in \mathfrak n}\int\mu_x^{n} \, d\xi_n(x)$ exists. 
Then, as $E^{\delta}_{\Phi_x}$ is a subset of  $F^{\delta,M}_{\Phi_x}$,  the measure $\overline{\zeta}:=\mu-\zeta$ with $\zeta$  as in Lemma \ref{one} is a component of the measure $\nu$ given in Lemma \ref{nouv} with $\xi_n^q=\xi_n$ for all $q$.  In particular  $\phi_*(x)\geq \delta$  for $\overline{\zeta}$ a.e.  $x$.
\end{rem}

 Contrarily to $\left(E^{\delta}_{\Phi_x}(M)\right)_M$ the sequence $\left(F_{\Phi_x}^{\delta,M}(M)\right)_M$ is a priori neither nonincreasing or nondecreasing.  Therefore we define two versions of the associated upper asymptotic density:

$$\overline{d}^+_\phi(x):=\lim^{\nearrow}_{\delta\rightarrow 0}\limsup_{M\rightarrow +\infty} \overline{d}(F_{\Phi_x}^{\delta,M}(M)),$$
$$\overline{d}^-_\phi(x):=\lim^{\nearrow}_{\delta\rightarrow 0}\liminf_{M \rightarrow +\infty} \overline{d}(F_{\Phi_x}^{\delta,M}(M)).$$

We say $\overline{d}^+_\phi=0$   uniformly on a subset $\mathrm E$ of $X$ when $\overline{d}^+_\phi(x)=0$ for all $x\in \mathrm E$ and the limsup in $M$ and $n$ defining $\limsup_M \overline{d}(F_{\Phi_x}^{\delta,M}(M))=0$ are uniform in $x\in \mathrm E$ for all $\delta>0$,  i.e.  
$$\forall \delta>0 \ \forall \epsilon>0\  \exists M_0\ \forall M>M_0 \ \exists n_0 \ \forall n>n_0 \ \forall x\in \mathrm E, \ d_n\left(F_{\Phi_x}^{\delta,M}(M)\right)<\epsilon.$$
Again, we may apply Egorov's theorem to get sets where  $\overline{d}^+_\phi=0$   uniformly : if $\lambda$ is a probability measure on $X$ and $\mathrm F$ is a subset with $\overline{d}^+_\phi(x)=0$ for all $x\in \mathrm  F$, then  $\overline{d}^+_\phi=0$   uniformly on a subset $\mathrm E$ of $\mathrm F$ with $\lambda(\mathrm E)$ arbitrarily close to $\mathrm F$. 

\begin{lem}\label{two}Assume $\overline{d}^+_\phi=0$ uniformly on $\mathrm E \subset X$.  Let $(\xi_n)_{n}\in \mathcal M(X)^{\mathbb N}$ with $\xi_n(\mathrm E)=1$ and let  $(\nu_n)_n=\left( \int \mu_x^n\, d\xi_n(x) \right)_n$.

Then for any limit $\nu$ of $(\nu_n)_n$ we have $\phi_*(x)\leq 0$ for  $\nu$-a.e.  $x$.

\end{lem}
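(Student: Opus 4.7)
The strategy is a proof by contradiction: assume $\nu(\{y : \phi_*(y) > 2\delta\}) > 0$ for some $\delta > 0$, and aim to contradict the uniform density hypothesis. Since each $\nu_n$ is asymptotically invariant ($T_*\nu_n - \nu_n$ has total mass $\leq 2/n$), the limit $\nu$ is $T$-invariant, so $\phi_*$ exists and is invariant $\nu$-a.e. by Birkhoff's ergodic theorem. The key reformulation is geometric: unrolling the definition, for $p \geq M$, the integer $p$ is a $(\delta,M)$-weakly hyperbolic time for $\Phi_x$ iff $T^p x \in U_M(\delta) := \{z \in X : \phi_m(T^{-m}z) \geq m\delta,\, 1 \leq m \leq M\}$, a closed set. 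Consequently $d_n\bigl(F^{\delta,M}_{\Phi_x}(M)\bigr) = \mu_x^n(V_M(\delta)) + O(M/n)$, where $V_M(\delta) := \bigcup_{m'=1}^M T^{-m'}U_M(\delta)$ is also closed.

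Fix $\delta' \in (0,\delta)$ and $\epsilon > 0$. Because $\xi_n(\mathrm{E}) = 1$ and the density hypothesis is uniform on $\mathrm{E}$, one obtains $\nu_n(V_M(\delta')) < \epsilon$ for $M$ large and $n \geq n_0(M)$. The subtlety is that Portmanteau bounds $\nu$ from above only on open sets, so I pass to the open enlargement $V_M^o(\delta') := \bigcup_{m'=1}^M T^{-m'}\{z : \phi_m(T^{-m}z) > m\delta',\, 1 \leq m \leq M\}$, which satisfies $V_M(\delta) \subset V_M^o(\delta') \subset V_M(\delta')$ precisely because $\geq \delta$ implies the strict inequality $> \delta'$. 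Portmanteau then yields $\nu(V_M(\delta)) \leq \nu(V_M^o(\delta')) \leq \liminf_{n \in \mathfrak{n}} \nu_n(V_M^o(\delta')) \leq \epsilon$, and letting $\epsilon \to 0$ gives $\nu(V_M(\delta)) = 0$ for $M \geq M_0(\delta)$. The $T$-invariance of $\nu$ then upgrades this to $\nu(U_M(\delta)) \leq \nu(T^{-1}U_M(\delta)) \leq \nu(V_M(\delta)) = 0$.

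The contradiction comes from Pliss's lemma. For $\nu$-a.e. $y \in \{\phi_* > 2\delta\}$, $\phi_n(y) \geq 2n\delta$ for $n$ large, so Pliss produces $\alpha = \alpha(\delta, \|\phi\|_\infty) > 0$ such that at least $\alpha n$ integers $p \in [0,n)$ are $\delta$-hyperbolic for $\Phi_y$; any such $p \geq M$ automatically satisfies $T^p y \in U_M(\delta)$. Applying Birkhoff's ergodic theorem to $\mathbf{1}_{U_M(\delta)}$ and integrating the resulting Cesaro limit over $\{\phi_* > 2\delta\}$ produces $\nu(U_M(\delta)) \geq \alpha\,\nu(\{\phi_* > 2\delta\}) > 0$, contradicting paragraph two. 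Hence $\nu(\{\phi_* > 2\delta\}) = 0$ for every $\delta > 0$, i.e.\ $\phi_* \leq 0$ $\nu$-a.e. The main obstacle is the closed/open-set dance in paragraph two: the density hypothesis controls the closed set $V_M(\delta')$, but Portmanteau gives $\nu$-upper bounds only on open sets, so one must loosen $\delta$ to $\delta'$ to pass through the open enlargement $V_M^o(\delta')$.
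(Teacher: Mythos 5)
Your proof is correct and follows essentially the same approach as the paper: both translate the density hypothesis on weakly hyperbolic times into a statement about empirical mass of an open set of points whose backward Birkhoff sums grow at rate $\delta$ for $M$ steps, apply Portmanteau to transfer the smallness to $\nu$, and derive a contradiction via a Pliss/maximal-ergodic lower bound for $\nu$ on that set when $\nu(\phi_*>0)>0$. The paper sidesteps your $\delta\to\delta'$ relaxation by building the strict inequalities directly into $H_\delta$ and $O_M$, so openness is free; this is a cosmetic difference. One small misstatement: \emph{``letting $\epsilon\to 0$ gives $\nu(V_M(\delta))=0$ for $M\geq M_0(\delta)$''} is not literally valid, since the threshold $M_0$ in the uniform-density hypothesis depends on $\epsilon$ and the sets $V_M(\delta)$ are not monotone in $M$. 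What your argument actually yields is $\nu(V_M(\delta))\leq\epsilon$ for all $M$ large (depending on $\epsilon$), which is still enough: fix $\epsilon<\alpha\,\nu(\phi_*>2\delta)$, then choose $M$ accordingly, and the Pliss bound $\nu(U_M(\delta))\geq\alpha\,\nu(\phi_*>2\delta)$, which holds for every $M$, delivers the contradiction.
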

Before proving Lemma \ref{two} we introduce some  notations that will also be used in the last section.   For $\delta>0$, $M\in \mathbb N^*$, we  will let $H_{\delta}:=H_{\delta}(\phi)$ and  $O_M:=O_M(\phi,\delta)$ be the sets defined as follows: 
$$H_{\delta}:=\{x, \ \forall m>0 \  \phi_m(T^{-m}x)>m\delta \},$$
$$O_M=\bigcup_{k=1,\cdots, M}T^{-k}\left\{x, \  \  \phi_m(T^{-m}x)> m\delta \text{ for all } m=1,\cdots M\right\}.$$

\begin{proof}

Assume $\nu(\phi_*>0)=\lambda>0$.  Fix $\delta>0$ small enough such that 
$\nu(\phi_*>\delta)=\beta >\lambda/2$.  For $M$ large enough we have 
 $\limsup_n \sup_{x\in \mathrm E}d_n\left(F_{\Phi_x}^{\delta,M}(M)\right)<\lambda/4$.

 Let  $K=\bigcup_{k\in \mathbb N^*} T^{-k}H_\delta$. By the ergodic maximal inequality, if $\eta$ is an ergodic component of  $\nu$ with $\phi_*>\delta$ $\eta$-a.e.  then $\eta(H_\delta)>0$ and $\eta(K)=1$ by ergodicity. Therefore  we have $\nu(K)\geq\beta>0$.    Let $\mathfrak n\subset \mathbb N$ be an infinite  subsequence with 
$\lim_{n\in \mathfrak n}\nu_n=\nu$.  For all $M$ the set $O_M$ is an open neighborhood of $\bigcup_{k=1,\cdots, M} T^{-k}H_\delta$, therefore for $M$ large enough we get 
$$\liminf_{n\in \mathfrak n}\nu_n(O_M)\geq \nu(O_M)\geq \nu\left(\bigcup_{k=1,\cdots, M} T^{-k}H_\delta\right)\geq \beta/2.$$ Observe also that when $T^lx$, $l\in \mathbb N$,   lies in $O_M$ then $l$ belongs to  $F_{\Phi_x}^{\delta,M}(M)$.  Thus we get the following contradiction for $M$ large enough:
 \begin{align*}
 \limsup_{n\in \mathfrak n}\nu_n(O_M)&=  \limsup_{n\in \mathfrak n} \int \mu_x^n(O_M)\,d\xi_n(x),\\
 &\leq  \limsup_n \sup_{x\in \mathrm E}d_n\left(F_{\Phi_x}^{\delta,M}(M)\right),\\
 &<\lambda/4<\beta/2.
\end{align*}
\end{proof}

\begin{rem}\label{susu}When $\xi_n$ is just the Dirac measure at some $x$ for any $n$   with $\overline{d}^-_\phi(x)=0$, i.e. $\nu_n=\mu_x^n$ the same conclusion holds, i.e. if $\overline{d}^-_\phi(x)=0$, then for any $\nu\in pw(x)$ we have $\phi_*(x)\leq 0$ for $\nu$-a.e. $x$ .  Indeed, to get the contradiction at the end of the above proof, one only needs to consider some (not any) large $M$ with $\overline{d}(F_{\Phi_x}^{\delta,M}(M))$ small. 
\end{rem}

Zero upper density of weakly-hyperbolic times at $x$  is also related  with the non-negativity of $\phi_*$ on $pw(x)$,  as stated in the following Lemma which may be compared with Lemma \ref{compa}.

\begin{lem}\label{equiv}
The following properties are equivalent :
\begin{enumerate}
\item  $\overline{d}^+_\phi(x)=0$,
\item  $\overline{d}^-_\phi(x)=0$,
\item for any $\mu\in pw(x)$ we have $\phi_*(x)\leq  0$ for $\mu$-a.e.  $x$.
\end{enumerate}
\end{lem}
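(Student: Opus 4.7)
The implications $(1)\Rightarrow(2)$ and $(2)\Rightarrow(3)$ are essentially free: the first is just $\liminf\le\limsup$ applied in the definitions, and the second is exactly Remark~\ref{susu}. So the substance lies in $(3)\Rightarrow(1)$, which I would prove by contradiction. Assume $\overline{d}^+_\phi(x)>0$, so there is $\delta>0$ with
\[
2\gamma:=\limsup_{M\to\infty}\overline{d}\bigl(F^{\delta,M}_{\Phi_x}(M)\bigr)>0.
\]

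The key technical move is to pass from the non-monotone sets $F^{\delta,M}_{\Phi_x}(M)$ to the nondecreasing sets $H^{\delta,M}_{\Phi_x}(M):=\bigcup_{N\le M}F^{\delta,N}_{\Phi_x}(N)$ used in the proof of Lemma~\ref{nouv}; these contain $F^{\delta,M}_{\Phi_x}(M)$, so $\lim_M^\nearrow\overline{d}(H^{\delta,M}_{\Phi_x}(M))\ge 2\gamma$. By a Cantor diagonal extraction I would select a single subsequence $\mathfrak n$ along which $\mu_x^n\to\mu\in pw(x)$ and, for every fixed $M$, $\mu_x^n[H^{\delta,M}_{\Phi_x}(M)]$ converges weak-$*$ to some $\eta^M$ with $\eta^M(X)=\overline{d}(H^{\delta,M}_{\Phi_x}(M))$. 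Exactly as in Lemma~\ref{nouv}, membership $k\in H^{\delta,M}_{\Phi_x}(M)$ forces $T^kx\in K_M:=\bigcup_{1\le m\le M}\{y:\phi_m(y)\ge m\delta\}$, and $K_M$ is closed, so Portmanteau yields $\eta^M(K_M)=\eta^M(X)$. Monotonicity in $M$ makes $\eta:=\lim_M^\nearrow\eta^M$ well defined; it is a component of $\mu$, supported on $K:=\bigcup_M K_M$, of mass at least $2\gamma$. Moreover $\eta$ is $T$-invariant, by the same $M$-almost invariance argument underlying the invariance of $\nu$ in Lemma~\ref{nouv}.

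Invariance then gives $\eta\bigl(\bigcap_{p\ge 0}T^{-p}K\bigr)=\eta(X)$. For any $z$ in this intersection I iterate the defining property: starting from $p_0=0$, set $p_{k+1}=p_k+m_k$ where $m_k\ge 1$ witnesses $T^{p_k}z\in K$ via $\phi_{m_k}(T^{p_k}z)\ge m_k\delta$; telescoping gives $\phi_{p_k}(z)\ge p_k\delta$ for all $k$ with $p_k\to\infty$, so $\overline{\phi_*}(z)\ge\delta$. Because $\mu$ is $T$-invariant, $\phi_*$ exists and equals $\overline{\phi_*}$ $\mu$-a.e.\ by Birkhoff; since $\eta\le\mu$, we conclude $\mu(\{\phi_*\ge\delta\})\ge\eta(X)>0$, contradicting (3). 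The main obstacle throughout is producing a positive-mass component of an empirical limit on which $\phi_*>0$: the non-monotonicity of $F^{\delta,M}_{\Phi_x}(M)$ in $M$ makes a naive diagonal extraction lose the mass bound, and the whole point of passing to the monotone union $H^{\delta,M}_{\Phi_x}(M)$ is to carry the density lower bound $2\gamma$ through to the mass of $\eta$.
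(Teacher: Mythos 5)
Your $(1)\Rightarrow(2)$ and $(2)\Rightarrow(3)$ are fine and match the paper. The substance is in $(3)\Rightarrow(1)$, and there your argument has a genuine gap at the invariance step.

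You claim that $\eta:=\lim_M^{\nearrow}\eta^M$, with $\eta^M=\lim_{n\in\mathfrak n}\mu_x^n[H^{\delta,M}_{\Phi_x}(M)]$, is $T$-invariant ``by the same $M$-almost invariance argument underlying the invariance of $\nu$ in Lemma~\ref{nouv}.'' That argument does not transfer. A limit of $\mu_x^n[E(M)]$ is $M$-almost invariant because the connected components of $E(M)$ have length at least $M$, so the symmetric difference with its shift has density $O(1/M)$. But $H^{\delta,M}_{\Phi_x}(M)=\bigcup_{N\le M}F^{\delta,N}_{\Phi_x}(N)$ contains $F^{\delta,1}_{\Phi_x}(1)$, whose components can be singletons, so $\eta^M$ is at best $1$-almost invariant \emph{uniformly in $M$} and the limit $\eta$ need not be invariant at all. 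The proof of Lemma~\ref{nouv} says this in so many words (``As $\nu$ is invariant (a priori $\eta$ is not)''): the invariant object there is $\nu$, a limit built from the sets $F^{\delta,M}_{\Phi_x}(M)$, and $\eta$ is only used, via $\nu\le\eta$, to obtain $\nu(K)=\nu(X)$. Without invariance you cannot pass from $\eta(K)=\eta(X)$ to $\eta(\bigcap_pT^{-p}K)=\eta(X)$, and $\eta(K)=\eta(X)>0$ by itself does not produce any positive-mass component of $\mu$ with $\phi_*>0$. (There is also a smaller slip: a single diagonal subsequence $\mathfrak n$ cannot make $\eta^M(X)=\overline d(H^{\delta,M}_{\Phi_x}(M))$ for \emph{every} $M$; one can only arrange $\eta^{M}(X)\ge\overline d(H^{\delta,M_0}_{\Phi_x}(M_0))$ for all $M\ge M_0$ after fixing $M_0$, which is enough.)

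Patching the invariance by instead working with $\nu^M=\lim_{n\in\mathfrak n}\mu_x^n[F^{\delta,M}_{\Phi_x}(M)]$ (which \emph{is} $O(1/M)$-almost invariant) creates the opposite problem: $F^{\delta,M}_{\Phi_x}(M)$ is not monotone in $M$, so a single $\mathfrak n$ gives no lower bound on $\nu^M(X)$. The paper's proof sidesteps the whole $K$-machinery. For each $M$ with $\overline d(F^{\delta,M}_{\Phi_x}(M))>\lambda/2$ it picks its own sequence $\mathfrak n_M$ realising this upper density, sets $\nu^M=\lim_{n\in\mathfrak n_M}\mu_x^n[F^{\delta,M}_{\Phi_x}(M)]$ and $\mu_M=\lim_{n\in\mathfrak n_M}\mu_x^n$, and proves directly that $\int\phi\,d\nu^M\ge\delta\,\nu^M(X)>\delta\lambda/2$. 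This integral bound is the key ingredient your proof is missing: on each connected component $[k,l)$ of $F^{\delta,M}_{\Phi_x}(M)$ one has $\phi_{l-k}(T^kx)\ge(l-k)\delta$, because the elements $p_1<\cdots<p_r=l$ of $F^{\delta,M}_{\Phi_x}$ in $(k,l]$ satisfy $p_1-k\le M$ and $p_{j+1}-p_j\le M$, so one telescopes the weakly hyperbolic inequality across these gaps, just as in Lemma~\ref{one}. Any weak-$*$ limit $(\mu,\nu)$ of $(\mu_M,\nu^M)_M$ then has $\nu$ invariant (since $\nu^M$ is $O(1/M)$-almost invariant), $\nu\le\mu\in pw(x)$, $\nu(X)\ge\lambda/2$, and $\int\phi_*\,d\nu=\int\phi\,d\nu>0$ by Birkhoff, so $\mu(\phi_*>0)>0$, contradicting (3).
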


\begin{proof}
Clearly $ \overline{d}^+_\phi(x)\geq \overline{d}^-_\phi(x)$ so that $(1)  \Rightarrow (2).$ 
Then $(2) \Rightarrow (3)$   follows from Remark \ref{susu}. Therefore it is enough to show $(3) \Rightarrow (1)$.  We argue by contradiction.  Assume $\lim^{\nearrow}_{\delta}\limsup_M \overline{d}(F_{\Phi_x}^{\delta,M}(M))>0$ and let us show there is $\mu\in pw(x)$ with $\mu(\phi_*>0)>0$. Fix $\delta>0$ with $\limsup_M \overline{d}(F_{\Phi_x}^{\delta,M}(M))=\lambda>0$.  For infinitely many  $M$ there is a sequence $\mathfrak n_M$ such that $\mu_x^n[F_{\Phi_x}^{\delta,M}(M)]$  is converging to $\nu^M$ with   $\nu^M(X)=\overline{d}(F_{\Phi_x}^{\delta,M}(M))>\lambda/2$, when $n\in \mathfrak n_M$ goes to infinity.   Then $\int \phi \, d\nu^M= \lim_{n \in \mathfrak n_M}\int \phi\,d\mu_x^n[F_{\Phi_x}^{\delta,M}(M)]\geq  \delta \lambda/2 $. We may also assume that $\mu_x^n$ is converging to  some $\mu_M\in pw(x)$, when $n\in \mathfrak n_M$ goes to infinity.   
Let $(\mu, \nu)$ be a weak-$*$ limit of  
the sequence $(\mu_M,\nu_M)_M\in \left( \mathcal M(X)\times \mathcal M(X)\right)^2$. Then $\mu\in pw(x)$ has $\nu\neq 0$ as a $T$-invariant   component with $\int \phi\, d\nu=\int \phi_*\, d\nu>0$. 
Therefore $\mu(\phi_*>0)>0$.  This contradicts (3).
\end{proof}
\subsection{Midly hyperbolic  empirical measures}
We consider midly empirical measures,  i.e. empirical measures with respect to $G_{\Phi_x}^{\delta,M}(N)$ for $1\ll N\ll M$.   The next lemma (and its proof) is analogous to Lemma \ref{nouv} for weakly hyperbolic empirical measures.  In this previous lemma, the measures $\xi_n^q$ are not allowed to depend on $M$ because of the lack of monotonicity in $M$ of $F^{\delta,M}_{\Phi_x}$.  This difficulty may be overcome for midly hyperbolic  empirical measures as follows.

\begin{lem}\label{nouvv}Let $\delta>0$ and $(\xi_n^M)_{n,M}\in \mathcal M(X)^{\mathbb N^2}$.  Let $\mathfrak n_M\subset \mathbb N$, $M\in \mathbb N$, be infinite subsequences such that 
$\nu_n^{M,N}:=\int\mu_x^{n}[G^{\delta,M}_{\Phi_x}\left(\left(N\right)\right)] \, d\xi_n^M(x)$ are converging for any $M,N$,  when $n\in \mathfrak n_M$ goes to infinity,  to $\nu^{M,N}$.   Let $\mathfrak M\subset \mathbb N$ be an infinite subsequence such  the limits $\nu^N=\lim_{M\in \mathfrak M}\nu^{M,N}$ exist for all $N$.    

Then for the nondecreasing limit $\nu$ of $(\nu_N)_N$ we have  $\phi_*(x)\geq \frac{\delta}{2}$  for $\nu$ a.e.  $x$.
\end{lem}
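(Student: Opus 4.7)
The plan is to mirror the proof of Lemma \ref{nouv}, with the midly hyperbolic structure supplying a Birkhoff estimate whose time-horizon $N$ is free rather than coupled to the auxiliary measure $\xi_n^M$. Introduce the compact set
$$\tilde K_N:=\{y\in X:\ \phi_m(y)\geq m\delta/2\text{ for some }1\leq m\leq N\},\qquad \tilde K:=\bigcup_N\tilde K_N.$$
The first step is the support inclusion: for every $k\in G^{\delta,M}_{\Phi_x}((N))$, the point $T^kx$ lies in $\tilde K_N$ up to a controlled boundary term. If $k+m\in G^{\delta,M}_{\Phi_x}$ with $m\in[1,N]$ and $k$ lies in the same connected component $I$ of $F^{\delta,M}_{\Phi_x}(M)$ as $k+m$, then the $\delta/2$-hyperbolic time property of $k+m$ for $(\Phi_x)_I$ applied at the index $k\in I$ yields $\phi_m(T^kx)\geq m\delta/2$, hence $T^kx\in\tilde K_N$. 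Indices $k$ whose witnessing $k+m$ lies in a different component of $F^{\delta,M}_{\Phi_x}(M)$ from $k$ are controlled using $k\in F^{\delta,M}_{\Phi_x}(M)$, which provides $\phi_{m'}(T^kx)\geq m'\delta$ for some $m'\in[1,M]$, together with a counting argument: at most $N$ such boundary indices per connected component of $F^{\delta,M}_{\Phi_x}(M)$, yielding asymptotic density tending to $0$ as $M\to\infty$.

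Second, by a Cantor-diagonal extraction we may in addition assume weak-$*$ convergence of $\nu_n^{M,N}$ to $\nu^{M,N}$ for every $M,N$; the hypotheses give $\nu^N=\lim_{M\in\mathfrak M}\nu^{M,N}$, and $\nu=\lim^{\nearrow}_N\nu^N$ is monotone since $G^{\delta,M}_{\Phi_x}((N))$ is nondecreasing in $N$. Combining the support inclusion with upper semicontinuity on the closed set $\tilde K_N$ gives $\nu_n^{M,N}(\tilde K_N)\geq \nu_n^{M,N}(X)-\epsilon(M,N)$ with $\epsilon(M,N)\to 0$ as $M\to\infty$; letting $n\to\infty$ along $\mathfrak n_M$ and then $M\to\infty$ along $\mathfrak M$ yields $\nu^N(\tilde K_N)\geq\nu^N(X)$, and monotone convergence in $N$ then gives $\nu(\tilde K)\geq\nu(X)$. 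Moreover, since $G^{\delta,M}_{\Phi_x}((N))$ is the intersection of the $N$-almost invariant set $G^{\delta,M}_{\Phi_x}(N)$ with the $M$-almost invariant set $F^{\delta,M}_{\Phi_x}(M)$, the empirical measure $\mu_x^n[G^{\delta,M}_{\Phi_x}((N))]$ is $O(\tfrac{1}{N}+\tfrac{1}{M})$-almost invariant, so the final limit $\nu$ is truly $T$-invariant.

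Finally, by invariance $\nu\bigl(\bigcap_{p\in\mathbb N}T^{-p}\tilde K\bigr)=\nu(X)$; for any $x$ in this intersection, iterating the defining inclusion $T^px\in\tilde K$ produces a sequence $n_k\nearrow\infty$ with $\phi_{n_k}(x)/n_k\geq\delta/2$, so $\overline{\phi_*}(x)\geq\delta/2$, and Birkhoff's ergodic theorem then yields $\phi_*(x)\geq\delta/2$ for $\nu$-a.e.\ $x$. The main obstacle will be the boundary estimate in Step~1: one must verify that the asymptotic density of indices of $G^{\delta,M}_{\Phi_x}((N))$ whose witnessing $\delta/2$-hyperbolic time is separated from $k$ by a component boundary of $F^{\delta,M}_{\Phi_x}(M)$ indeed vanishes as $M\to\infty$. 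This is precisely where the midly hyperbolic structure and the Pliss-type inequality $\overline d(G^{\delta,M}_a)\geq \alpha\,\overline d(F^{\delta,M}_a(M))$ must substitute for the $M$-monotonicity of $H^{\delta,M}_{\Phi_x}(M)$ exploited in Lemma \ref{nouv}.
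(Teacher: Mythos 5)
Your proof is correct and follows the same overall scheme as the paper's: introduce $K_N=\{y:\phi_m(y)\geq m\delta/2\text{ for some }1\leq m\leq N\}$, argue that the measures $\nu_n^{M,N}$ are (essentially) supported on the closed set $K_N$, pass to the limit in $n$ then $M$ then $N$ using upper semicontinuity and monotonicity in $N$, observe that $\nu$ is $T$-invariant because $\mu_x^n[G^{\delta,M}_{\Phi_x}((N))]$ is $O(1/N+1/M)$-almost invariant, and finally deduce $\overline{\phi_*}\geq\delta/2$ on $\bigcap_p T^{-p}K$ and invoke Birkhoff. The one place you diverge is the handling of what you call case (b), the indices $k\in G^{\delta,M}_{\Phi_x}((N))$ whose witnessing $\delta/2$-hyperbolic time $k+m$ lies in a \emph{different} component $I'$ of $F^{\delta,M}_{\Phi_x}(M)$ from the component $I$ containing $k$. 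You treat these as an error term of density $O(N/M)$, which is sound (each component of $F^{\delta,M}_{\Phi_x}(M)$ has length at least $M$, and the bad $k$'s occupy at most $N$ indices at the right end of each component), and the error vanishes in the $M$-limit. But the exact inclusion $T^kx\in K_N$ in fact holds for all such $k$ once $M\geq N$, so no density correction is needed: since $\max I\in F^{\delta,M}_{\Phi_x}(M)$ while $\max I+1\notin F^{\delta,M}_{\Phi_x}(M)$, the only admissible witness forces $\max I+1\in F^{\delta,M}_{\Phi_x}$; setting $l'=\max I+1-k$ one has $1\leq l'\leq m\leq N\leq M$ and the $(\delta,M)$-weak hyperbolicity of $\max I+1$ at scale $l'$ gives $\phi_{l'}(T^kx)\geq l'\delta\geq l'\delta/2$, hence $T^kx\in K_N$. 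This is presumably the content of the paper's ``it follows from the definition of $G^{\delta,M}_{\Phi_x}((N))$ that $\nu_n^{M,N}(K_N)=\nu_n^{M,N}(X)$.'' Also, your closing appeal to the Pliss-type inequality $\overline d(G^{\delta,M}_a)\geq\alpha\,\overline d(F^{\delta,M}_a(M))$ plays no role in this lemma; that inequality is used elsewhere (Lemma \ref{lem:abo}) to guarantee the midly hyperbolic times have positive density, not to control boundary terms here.
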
 

\begin{proof}
Denote $K_N:=\{x\in X, \ \phi_m(x)\geq m\delta/2 \text{ for some } N\geq m\geq 1\}$  and 
$K=\bigcup_{N\geq 1}K_N$.    As 
$K_N$ is a compact set of $X$,  we have $\nu^N(K_N)\geq \lim_M\lim_{n\in \mathfrak n_M}\nu_n^{M,N}(K_M)$,  but 
it follows from the definition  of $G^{\delta,M}_{\Phi_x}\left(\left(N\right)\right)$ that $\nu_n^{M,N}(K_N)=\nu_n^{M,N }(X)$, therefore $
\nu^N(K_N) \geq  \lim_M \lim_n\nu_n^{M,N }(K_N)=\nu^N(X)$.  Then $\nu(K)=\nu(X)$  
because the sequence $\nu^N$ is nondecreasing in $N$, thus we have  $\nu(K)=\nu(X)$. Then one may conclude as in Lemma \ref{nouv}.\end{proof}

\section{Empirical measures   for partially hyperbolic systems }

Let   $\Lambda$ be an attracting  set of a $\mathcal C^{1+}$ diffeomorphism $f$, i.e. $\Lambda$ is a compact invariant set with an open neighborhood $U$ satisfying $f(\overline{U})\subset U $ and $\Lambda=\bigcap_{n \in \mathbb N}f^n U$, with  a partially hyperbolic splitting $T\mathbf M|_{\Lambda} = E^u \oplus_{\succ}  E_{1} \oplus_{\succ}\cdots \oplus_{\succ} E_{k} \oplus_{\succ} E^s$ with $\mathrm{dim}(E_{i})=1$ for $i=1,\cdots, k$.  The bundles in the splitting are $f$-invariant and H\o lder continuous. We may choose a norm adapted to the splitting \cite{Gou} :   the bundles $E^u$ and $E^s$ are respectively uniformly expanding and contracting, i.e.  
$\|Df|_{E^s(x)}\|<1$ and $\|Df^{-1}f|_{E^u(x)}<1$ for all $x\in \Lambda$ and $E \oplus_{\succ}F $ means that for any unit vectors $v_E\in E(x)$ and $v_F\in F(x)$ we  have 
$\|D_xf(v_E)\|<\|D_x f(v_F)\|$.  We may assume  that the bundles and the splittings hold on the neighborhood $U$. \\
 
For any $i=1,\cdots, k$ and for any $x\in \mathbf M$ we let $\phi^i(x)=\log\|D_xf|_{E_{i}}\|$.  We also put $\phi^0(x)=-\log\|D_{fx}f^{-1}|_{E^u}\|$ and $\phi^{k+1}(x)=\log\|D_xf|_{E^s}\|$.  The $i^{th}$ center exponent of an invariant measure $\mu$ is then 
$$\forall i=1,\cdots, k, \ \phi_i(\mu)=\int \phi^i_*\, d\mu.$$
By the domination property there is $a>0$ such that $(\phi_{i+1}-\phi_i)(\mu)>a$ for any invariant probability measure $\mu$ and for any $i=1,\cdots, k-1$.  An invariant probability measure $\mu$ with $\mu(\phi_*^i=0)=0$ for all $i$ is said to be hyperbolic.  \\

To simplify the notations we let for all $i=1,\cdots, k$ and  for all $x\in M$ 
$$\alpha_i(x)=\underline{d}_{\phi^i}(x)$$
and 
$$ \beta_i(x)=\overline{d}^+_{-\phi^i}(x).$$
We let also by convention $\alpha_0=1$,  $\alpha_{k+1}=0$ and  $\beta_{k+1}=0$.  We have 
$\left\{\beta_{i}>0\right\}\subset \left\{\alpha_{i}<1\right\}$. 
Indeed by Lemma \ref{equiv}, when  $\beta_{i}(x)>0$,  there is $\mu\in pw(x)$ with $\mu\left(\phi^{i}_*<0\right)>0$.  In particular  from Lemma \ref{compa} we get   $\alpha_{i}(x)<1$.   Observe also that when  $\beta_{i}(x)=0$ and $\alpha_i(x)<1$, then there is $\mu\in pw(x)$ with $\mu(\phi^{i}_*=0)>0$.  In the definition of $\beta_i$ we could also have chosen  $\beta_i=\overline{d}^-_{-\phi^i}$ without making any difference in the next statements because of the equivalence $(1)\Leftrightarrow (2)$ in Lemma \ref{equiv}.

\begin{prop}\label{fond}For any $i=0,\cdots, k$ and for Lebesgue a.e.  point $x\in U$ with $\alpha_{i+1}<\alpha_i(x)=1$, we have 
\begin{itemize}
\item either $\beta_{i+1}(x)>0$ then $x$ lies in the basin of an ergodic hyperbolic SRB measure,
\item or $\beta_{i+1}(x)=0$, then some   $\mu\in pw(x)$  admits a non-hyperbolic SRB component $\nu$ with $\phi_*^{i+1}(x)=0$  for $\nu$-a.e.  $x$.
\end{itemize}
\end{prop}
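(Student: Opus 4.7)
The plan is the entropic--geometric method of \cite{Bur,bd}: estimate from below the entropy of a suitable limit empirical measure through a Gibbs property at simultaneous hyperbolic times for $\phi^0,\ldots,\phi^i$, then force equality in Pesin's formula by Ruelle's inequality. The dominated uniform gap $\phi^{j+1}\le \phi^j-a$ implies $\alpha_j(x)\ge\alpha_i(x)=1$ for all $j\le i$. By Egorov I first restrict to a compact subset $\mathrm E\subset U$ of Lebesgue measure arbitrarily close to full, on which $\underline d_{\phi^j}=1$ uniformly for $j\le i$, and in the subcase $\beta_{i+1}=0$ also $\overline d^+_{-\phi^{i+1}}=0$ uniformly.

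For small $\delta>0$ and large $M$, I set
$$F(x):=\bigcap_{j=0}^i E^\delta_{\Phi^j_x}(M),\qquad x\in\mathrm E,$$
so that $d_n(F(x))\to 1$ uniformly on $\mathrm E$ as $n,M\to\infty$ and $\delta\to 0$. Each $p\in F(x)$ is within $M$ of a joint $\delta$-hyperbolic time for $\phi^0,\ldots,\phi^i$; the Pesin-type construction at such times (as in \cite{ABV}) yields an embedded $(i+1)$-dimensional graph disk through $x$ tangent to $E^u\oplus E_1\oplus\cdots\oplus E_i$, of uniform inner radius, on which $f^{-p}$ has bounded distortion and image of diameter less than $\epsilon$. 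Together with domination and the uniform contraction on $E^s$, this yields the Gibbs-type bound
$$-\log\Leb\bigl(P^{F(x)\cap[0,n)}(x)\bigr)\le \sum_{p\in F(x)\cap[0,n)}\sum_{j=0}^i\phi^j(f^p x)+Cn\bigl(1-d_n(F(x))\bigr)+C$$
for any finite partition $P$ of $U$ of diameter less than $\epsilon$ with $\Leb(\partial P)=0$. Feeding this into Lemma \ref{lem:Mis} with the normalized Lebesgue measure on $\mathrm E$ and the set-valued map $F$, dividing by $n$ and passing to weak-$*$ limits in $n$, then $m,M\to\infty$ and $\delta\to 0$, produces a $T$-invariant $\nu$, a component of a limit of Lebesgue-integrated empirical measures, with
$$h(\bar\nu)\ge \sum_{j=0}^i\phi_j(\bar\nu),\qquad \bar\nu:=\nu/\nu(U),$$
and $\phi^j_*>0$ $\bar\nu$-a.e.\ for $j\le i$ by Remark \ref{rem:nouvpr}.

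For the dichotomy: in the subcase $\beta_{i+1}(x)>0$ I further intersect $F(x)$ with a midly hyperbolic time set for $-\phi^{i+1}$ of positive density (via Lemma \ref{nouvv} and Pliss's inequality (\ref{Pliss})); the same construction then produces $\bar\nu$ with $\phi^{i+1}_*<0$ $\bar\nu$-a.e., and the dominated gap propagates negativity to all $j>i+1$, so Ruelle's inequality meets the lower bound forcing equality in Pesin's formula, hence $\bar\nu$ is a hyperbolic SRB measure. Absolute continuity of its unstable conditionals gives a basin of positive Lebesgue measure; finiteness of such SRB measures follows by volume exhaustion, and a density-point argument on $\mathrm E$ places Lebesgue-a.e.\ $x\in\mathrm E$ in the basin of a single ergodic hyperbolic SRB. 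In the subcase $\beta_{i+1}(x)=0$, the positive density of $p\in F(x)\cap[0,n)$ with $p\notin E^\delta_{\Phi^{i+1}_x}$ (coming from $\alpha_{i+1}(x)<1$) combined with the defining inequality of $\overline{E^\delta_{\Phi^{i+1}_x}}$ forces $\phi_{i+1}(\bar\nu)\le 0$ in the limit $\delta\to 0$; Lemma \ref{two} gives the reverse inequality $\phi^{i+1}_*\ge 0$ $\bar\nu$-a.e., so $\phi^{i+1}_*=0$ $\bar\nu$-a.e., producing the non-hyperbolic SRB component of the statement. The main difficulty is the Gibbs estimate above: simultaneously controlling the Pesin unstable-disk construction along all $i+1$ one-dimensional bundles, handling the ``jitter'' between $E^\delta_{\Phi^j_x}$ and its $M$-dilation $E^\delta_{\Phi^j_x}(M)$, and obtaining uniform bounded distortion transverse to the splitting.
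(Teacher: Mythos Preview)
Your overall strategy follows the paper's entropic--geometric method, but there are two genuine gaps.

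First, and most critically for the non-hyperbolic item: the measure $\bar\nu$ you construct is a component of a limit of \emph{Lebesgue-integrated} empirical measures $\int_{\mathrm E}\mu_x^n\,d\Leb$, not of some $\mu\in pw(x)$ for a specific point $x$. The statement requires the latter. The paper bridges this via Lemma~\ref{lem:tech}: for $\Leb_D$-a.e.\ $x$ one finds $\nu\in pw(x)$ together with sets $A_n\subset\mathcal D'_i$ of $\Leb_D$-measure $\ge n^{-2}$ on which $\sup_{y\in A_n}\mathfrak d(\mu_y^n,\nu)\to 0$, so that integrating over $A_n$ still yields a component of this $\nu\in pw(x)$. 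Without such a step your $\bar\nu$ is not shown to be a component of any $\mu\in pw(x)$, and the second item is unproved.

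Second, your Gibbs estimate is set up differently from the paper and, as written, does not go through. The paper slices by Fubini onto a disc $D$ tangent to the cone $\mathfrak C_i$ and invokes Lemma~\ref{dist}, which bounds $\Leb_D(P^{F_n}(x))$ by $C'C^{\sharp\partial F_n}e^{\epsilon n}/\psi_i^{F_n}(x)$ provided $\partial F_n\subset E^\delta_{\Phi^i_x}$; only the single observable $\phi^i$ and the Jacobian $\psi_i=\mathrm{Jac}(Df|_{G_i})$ enter, domination taking care of the lower bundles automatically. You instead work with full Lebesgue on $\mathbf M$, intersect hyperbolic times for all $\phi^0,\ldots,\phi^i$, and claim a bound in terms of $\sum_{j=0}^i\phi^j$. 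That sum is not $\log\psi_i$ when $\dim E^u>1$ (since $\phi^0$ is a co-norm, not the $E^u$-Jacobian), your displayed inequality has the wrong sign (one needs $-\log\Leb(\cdot)\ge\ldots$ to feed Lemma~\ref{lem:Mis}), and with full Lebesgue the transverse center bundles $E_{i+1},\ldots,E_k$ must be controlled --- they neither uniformly contract nor fall under your hyperbolic-time hypothesis. Your closing paragraph flags exactly this as ``the main difficulty''; the paper sidesteps it entirely by working on $D$. Finally, in the hyperbolic item ``volume exhaustion'' does not yield finiteness of ergodic SRB measures (nor is finiteness claimed in Proposition~\ref{fond}); the paper instead argues by contradiction, using dynamical density points on $\delta$-hyperbolic times (Proposition~\ref{Thm:density-point}) together with absolute continuity of the Pesin stable lamination to force a point of the putative exceptional set into a basin.
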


Theorem  1 follows straightforwardly from Proposition \ref{fond} as we have 
$$U\subset\bigcup_{i=0,\cdots, k}\left\{ \alpha_i=1 \text{ and } \alpha_{i+1}<1  \right\}. $$

\begin{prop}\label{prop:second}
Let $(\mu_q)_q$ be a sequence  of  ergodic SRB measures converging to some $\mu\in \mathcal M(\Lambda,f)$. Let  $i\in \{0,  \cdots,  k\}$ be the  (unique) integer with $\mu(\phi^i_*>0)=1$ and  $\mu(\phi^{i+1}_*>0)<1$.  We have:
\begin{itemize}
\item either $\mu(\phi^{i+1}_*< 0)>0$, then $\mu_q$ is equal to $\mu$ for large $q$  and $\mu$ is an ergodic  hyperbolic SRB measure with unstable index $i$,  i.e.  $\phi^{i+1}_*(x)< 0<\phi^i_*(x)$ for  $\mu$-a.e $x$,
\item or $\mu(\phi^{i+1}_*\geq 0)=1$, then $\mu$ has a non-hyperbolic SRB component $\nu$ with $\phi^{i+1}_*(x)=0$ for $\nu$-a.e. $x$.  
\end{itemize}
\end{prop}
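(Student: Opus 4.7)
The approach parallels that of Proposition \ref{fond}, but applies the empirical-measure machinery of Section 2 to $\mu_q$-generic points instead of Lebesgue-typical ones. Since each $\phi^j$ is continuous on $U$, the Lyapunov functional $\nu\mapsto\phi_j(\nu)$ is weak-$*$ continuous, so $\phi_j(\mu_q)\to\phi_j(\mu)$. The hypothesis $\mu(\phi^i_*>0)=1$ forces $\phi_i(\mu)>0$, hence $\phi_i(\mu_q)>0$ for large $q$, so the unstable index $i_q$ of $\mu_q$ satisfies $i_q\geq i$ eventually. For each $q$ pick a Birkhoff- and Oseledec-regular $x_q\in\mathcal{B}(\mu_q)$ and $n_q\to\infty$ with $\mathfrak d(\mu_{x_q}^{n_q},\mu_q)<1/q$, so $\mu_{x_q}^{n_q}\to\mu$; by Lemma \ref{compa}, $\underline d_{\phi^j}(x_q)=1$ for $j\leq i_q$, and by Lemma \ref{equiv}, $\overline d^+_{-\phi^j}(x_q)>0$ whenever $\phi_j(\mu_q)<0$.

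\textbf{Extraction of an SRB component.} Writing $\Phi^i_x:=(\phi^i(f^lx))_l$ and passing to a subsequence by diagonal extraction, I may assume $\mu_{x_q}^{n_q}[G^{\delta,M}_{\Phi^i_{x_q}}((N))]$ converges for all relevant parameters. Pliss' lemma applied with the uniform lower bound on $\phi_i(\mu_q)$ gives uniform positive lower bounds on the midly hyperbolic time densities at $x_q$, so Lemma \ref{nouvv} produces a non-zero invariant component $\nu$ of $\mu$ with $\phi^i_*\geq\delta/2$ $\nu$-a.e. Applying Lemma \ref{lem:Mis} with the set-valued map $F(x)=G^{\delta,M}_{\Phi^i_x}((N))$, combined with the Gibbs estimate at center-unstable hyperbolic times from \cite{bd}, yields $h(\nu/\nu(X))\geq\sum_{j\leq i}\phi_j(\nu/\nu(X))$; together with Ruelle's inequality this forces Pesin's formula, so $\nu/\nu(X)$ is an SRB probability measure of unstable index at least $i$.

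\textbf{Case analysis and main obstacle.} In Case 1, an ergodic component $\tilde\mu$ of $\mu$ has $\phi_j(\tilde\mu)>0$ for $j\leq i$ and $\phi_{i+1}(\tilde\mu)<0$, so $\tilde\mu$ is hyperbolic. Refining the extraction by intersecting with midly hyperbolic times for $-\phi^{i+1}$ (whose density at $x_q$ is uniformly positive whenever $\phi_{i+1}(\mu_q)<0$) and invoking Lemma \ref{nouvv} applied to $-\phi^{i+1}$, one refines $\nu$ so that $\phi^{i+1}_*<0$ $\nu$-a.e., and hence $\nu/\nu(X)$ is an ergodic hyperbolic SRB measure of unstable index $i$. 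Establishing $\mu=\nu/\nu(X)$ (so that $\mu$ itself is ergodic) and $\mu_q=\mu$ for large $q$ relies on the weak-$*$ isolation of ergodic hyperbolic SRB measures of fixed unstable index, proved via the absolute continuity of their unstable disintegrations. In Case 2, for a subsequence along which $\phi_{i+1}(\mu_q)\leq 0$ one has $\overline d^+_{\phi^{i+1}}(x_q)=0$, and Lemma \ref{two} applied on a suitable Egorov set forces $\phi^{i+1}_*\leq 0$ $\nu$-a.e.; combined with $\nu\leq \mu$ and $\mu(\phi^{i+1}_*\geq 0)=1$ this yields $\phi^{i+1}_*=0$ $\nu$-a.e. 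The main obstacle is two-fold: (i) the rigidity/isolation step in Case 1 — a genuine statement about weak-$*$ accumulation of ergodic hyperbolic SRB measures, not implied by exponent continuity alone — and (ii) handling the scenario of Case 2 in which no subsequence of $\mu_q$'s with $\phi_{i+1}(\mu_q)\leq 0$ can be extracted, where one must construct $\nu$ via a refined slicing combining midly-hyperbolic times for $\phi^i$ with the complement of weakly-hyperbolic times for $\phi^{i+1}$ at a single $x_q$.
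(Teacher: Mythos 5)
Your proposal has genuine gaps, two of which you flag yourself, and a third that is more fundamental.

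\textbf{The single-orbit issue.} You pick Birkhoff-regular points $x_q\in\mathcal B(\mu_q)$ and work with the empirical measures $\mu_{x_q}^{n_q}$. But the entropy lower bound via Lemma~\ref{lem:Mis} together with the Gibbs estimate (Lemma~\ref{dist}) needs a non-atomic reference measure: Lemma~\ref{lem:Mis} is applied to a measure $\mu$ and produces $H_\mu(P^{F(x)}(x))$-type quantities, and Lemma~\ref{dist} controls $\mathrm{Leb}_D$ of iterated partition atoms on a disc $D$ tangent to $\mathfrak C_i$. A single orbit gives neither. The paper instead takes the conditional measure of $\mu_q$ on a Pesin $i$-unstable disc $D_q$, observes that $\mathrm{Leb}_{D_q}$-a.e.\ point is $\mu_q$-Birkhoff-typical, and integrates the sliced empirical measures $\mu_x^n[\cdot]$ against $d\mathrm{Leb}_{D_q}(x)$. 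This integration over $D_q$ is what makes the Gibbs estimate available. As a side note, because $\mathrm{Leb}_{D_q}$ depends only on $q$ (not on the parameter $M$), Lemma~\ref{nouv} applies directly and the midly hyperbolic times of Lemma~\ref{nouvv} are not needed here — the paper explicitly points this out.

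\textbf{The two gaps you name.} (i)~You rightly note that concluding $\mu_q=\mu$ in the hyperbolic case is not a corollary of exponent continuity. The paper resolves this without invoking any abstract ``isolation'' principle: having built the hyperbolic SRB component $\nu$, absolute continuity of the Pesin stable foliation shows a set of positive $\mathrm{Leb}_{D_q}$-measure lies in the basin of an ergodic component $\xi$ of $\nu$; since $\mathrm{Leb}_{D_q}$-a.e.\ point is $\mu_q$-typical, $\xi=\mu_q=\nu=\mu$ for $q$ large. This is the step your construction cannot reach, precisely because you discarded the disc $D_q$. (ii)~Your Case~2 strategy genuinely needs a subsequence with $\phi_{i+1}(\mu_q)\le 0$, which may fail (e.g.\ $\phi_{i+1}(\mu_q)\searrow 0$ from above). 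The paper sidesteps this entirely: it filters on the \emph{complement} $\overline{E^\delta_{\Phi^{i+1}_x}(M)}$ of hyperbolic times for $\phi^{i+1}$, and uses Lemma~\ref{one} (and Remarks~\ref{poli},~\ref{rem:nouvpr}) to get $\int\phi^{i+1}\,d\eta_\delta=\delta\,\eta_\delta(\mathbf M)$ together with a uniform lower bound $\eta_\delta(\mathbf M)\ge\mu(\phi^{i+1}_*=0)>0$; letting $\delta\to 0$ gives $\int\phi^{i+1}\,d\hat\nu=0$, and the conclusion follows from $\hat\nu\le\mu$ and $\mu(\phi^{i+1}_*\ge 0)=1$. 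No sign assumption on $\phi_{i+1}(\mu_q)$ is required.
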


In particular,  any limit of distinct ergodic SRB measures has an SRB non-hyperbolic component.  When all SRB measures are assumed to be hyperbolic,  there are therefore only finitely many SRB measures. Together with Theorem \ref{mmain} one easily completes the proof of Theorem \ref{ttwo}.\\

\begin{ques} For $i=1,\cdots,  k$ we let $G_i$ be the bundle $G_i=E^u\oplus 
  E_1\oplus\cdots \oplus E_i$.  An $f$-invariant   measure $\mu$ is called an $i$-Gibbs state when $\phi^i_*>0$ $\mu$-a.e.   and the Ledrappier-Young entropy of $\mu$ with respect to the Pesin $i$-unstable manifolds  $W^{i}$ tangent to $G_i$  is equal to the sum of  the $\mathrm{dim}(G_i)$ first positive exponents of $\mu$.  Equivalently the conditional measures along $W^{i}$ are absolutely continuous w.r.t. the Lebesgue measure on $W^{i}$.  We believe that for    Lebesgue a.e.  $x$ with $\alpha_i(x)=1$ any $\mu\in pw(x)$ is a $i$-Gibbs state.  However our proof does not allow us to prove it.  Also, in Theorem \ref{mmain} can one straighten the second item by showing that \textbf{any} empirical measure $\mu\in pw(x)$ has an SRB non-hyperbolic  component?

\end{ques}

\subsection{Gibbs property for empirical measures at hyperbolic times}

 We let $\psi_i(x):= \mathrm{Jac}(Df|_{G_i})(x)$ for any $x\in U$ and any $i=1,\cdots,  k$.  For a finite set of nonnegative  integers $F$ 
 we let $\psi_i^F(x)=\prod_{k\in F}\psi_i(f^kx)$. Also we write $\partial F$ for the symmetric difference $F\Delta (F+1)$. Let $\mathfrak C_i$ 
 be an invariant cone around $G_i$. A smooth embedded disc $D$ 
 is said tangent to $\mathfrak C_i$ when the dimension of $D$ is
  equal to the dimension of $G_i$ and the tangent space of $D$ at 
  any $x\in D$ is contained in $\mathfrak C_i(x)$. The next statement is borrowed from  \cite{bd}.   We recall that for a partition $P$  of $\mathbf M$ the set $P^{F}(x)$ denotes the atom of the iterated partition $P^{F}=\bigvee_{k\in F}f^{-k}P$ containing the point $x$.

 \begin{lem}\cite[Proposition 2.7]{bd}\label{dist}For any  disc $D$ tangent to $\mathfrak C_i$, for any $\delta>0$ and  for any $\epsilon>0$ there are $C,C'$ and $\alpha>0$  such that we have  for any partition $P$ with diameter less than $\alpha$, for any $x\in U$, for any $ n\in \mathbb N$ and for any set of integers $F_n\subset [0,n[$ with $\partial F_n \subset  E^{\delta}_{\Phi^x_i} $: 
\begin{equation}\label{eq:gibbs} \mathrm{Leb}_D(P^{F_n}(x))\leq \frac{C' C^{\partial F_n}e^{\epsilon n}}{ \psi_i^{F_n}(x)}.
\end{equation}

 \end{lem}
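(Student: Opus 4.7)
The plan is to combine backward bounded distortion at hyperbolic times along $D$ with an estimate of $\psi_i$ on the gaps $[0,n)\setminus F_n$ via the dominated splitting.

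First I would establish a backward distortion estimate at hyperbolic times. At any $\delta$-hyperbolic time $p\in E^\delta_{\Phi^x_i}$, unwinding the definition yields $\prod_{l=k}^{p-1}\|Df|_{E_i(f^lx)}\|\ge e^{(p-k)\delta}$ for $k=0,\dots,p-1$, so $Df^{-(p-k)}$ contracts $E_i(f^px)$ at rate at most $e^{-(p-k)\delta}$. By the dominated splitting $E^u\oplus E_1\oplus\cdots\oplus E_i$, the same Pliss-type backward contraction (up to a uniform multiplicative constant) holds along each $E_l$ with $l\le i$, and therefore along any vector in the cone $\mathfrak C_i$, in particular along $T_zf^pD$ for $z$ close to $f^px$ on $f^pD$. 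Combined with the $\eta$-Hölder regularity of $Df$ and of the bundles (since $f\in\mathcal C^{1+}$), and summing Hölder errors as the geometric series $\sum_k(\rho e^{-k\delta})^\eta$, this produces uniform $\rho=\rho(\delta,\epsilon)>0$ and $C_0=C_0(\delta,\epsilon)$ such that for every $z$ in the ball of radius $\rho$ around $f^px$ on $f^pD$,
\[
\bigl|\log\mathrm{Jac}(Df^{-p}|_{T_zf^pD})-\log\mathrm{Jac}(Df^{-p}|_{T_{f^px}f^pD})\bigr|\le C_0.
\]
The same argument, using strict forward invariance of $\mathfrak C_i$ so that the angle between $T_{f^kx}f^kD$ and $G_i(f^kx)$ contracts geometrically, also gives $|\log\mathrm{Jac}(Df^p|_{T_xD})-\log\psi_i^{[0,p)}(x)|\le C_0'$.

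Setting $p:=\max\partial F_n$ (a hyperbolic time by assumption), I would then restrict to the connected component $V$ of $P^{F_n}(x)\cap D$ containing $x$ and apply the change of variables under $f^p$. Since $p-1\in F_n$, any $y\in V$ satisfies $f^{p-1}y\in P(f^{p-1}x)$, giving $d(f^py,f^px)\le\|Df\|_\infty\alpha$; choosing $\alpha<\rho/\|Df\|_\infty$ places $f^p(V)$ inside the backward-distortion ball of the previous step. The distortion estimate, together with the trivial bound $\mathrm{Leb}_{f^pD}(f^p(V))\le C_1$ (area of a ball of radius $\|Df\|_\infty\alpha$ on the smooth disc $f^pD$), then gives
\[
\mathrm{Leb}_D(V)\le e^{C_0}C_1\mathrm{Jac}(Df^p|_{T_xD})^{-1}\le\frac{C_2}{\psi_i^{[0,p)}(x)}.
\]

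The final step absorbs the gap factor $\psi_i^{[0,p)\setminus F_n}(x)$. The set $[0,p)\setminus F_n$ decomposes into intervals $[b_{j-1},a_j)$ (with $b_0:=0$) each terminating at a hyperbolic time $a_j$, so the Pliss property at $a_j$ gives $\sum_{l=b_{j-1}}^{a_j-1}\phi^i(f^lx)\ge(a_j-b_{j-1})\delta$; combined with $\psi_i\ge|\det(Df|_{E^u})|\cdot\prod_{j=1}^i\|Df|_{E_j}\|\ge c\,e^{i\phi^i}$ for some $c>1$ (uniform expansion of $E^u$ plus domination), this yields $\psi_i^{[b_{j-1},a_j)}(x)\ge(c\,e^{i\delta})^{a_j-b_{j-1}}\ge1$, and hence $\psi_i^{[0,p)}(x)\ge\psi_i^{F_n}(x)$. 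Combining gives the claimed bound for the local component $V$; covering the atom $P^{F_n}(x)\cap D$ by such local components and tracking Hölder errors iteratively at each hyperbolic time of $\partial F_n$ yields the full statement with the $C^{\sharp\partial F_n}e^{\epsilon n}$ factor. The main obstacles are (i) the backward distortion in the first step for discs tangent only to the cone $\mathfrak C_i$ rather than to $G_i$ itself, which requires matching the Pliss rate $\delta$ against the Hölder exponent of the invariant bundles, and (ii) the gap bookkeeping, where the hyperbolic-time property at the gap endpoints must be leveraged with the domination to guarantee a factor $\ge1$ per gap in $\psi_i$.
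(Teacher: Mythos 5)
The paper does not reprove this statement; it is imported verbatim from \cite[Proposition 2.7]{bd}, and the paper explicitly records (just after the statement) that its proof rests on the bounded-geometry Lemma \ref{Lem:Pliss-iterate}. Your outline never invokes this ingredient, and that is where the gap lies.

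The step ``the trivial bound $\mathrm{Leb}_{f^pD}(f^p(V))\le C_1$ (area of a ball of radius $\|Df\|_\infty\alpha$ on the smooth disc $f^pD$)'' is not trivial and is, in fact, the crux of the statement. The constraint $p-1\in F_n$ only forces $f^p(V)$ into a small \emph{ambient} ball $B(f^px,\|Df\|_\infty\alpha)$; it does not control the intrinsic geometry of the piece of $f^pD$ meeting that ball. A smooth disc iterated $p$ times can have enormous intrinsic area inside an ambient ball of fixed small radius (it may fold and spiral), so this quantity has no a priori bound. The uniform bound is precisely what Lemma \ref{Lem:Pliss-iterate} supplies: at a hyperbolic time $p$, the part of $f^pD$ coming from a definite neighbourhood of $x$ is a genuine disc $D_{f^px}$ of uniform radius $\gamma$ with exponentially contracting preimages. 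But to place $f^p(V)$ inside $D_{f^px}$ you must show that every $y$ in the atom shadows $x$ at the scales controlled by that bounded geometry, which needs the partition constraints on a whole block of $F_n$ ending at $p$, not merely the single constraint at $p-1$; and because $F_n$ has gaps where no constraint is imposed, this containment has to be established \emph{iteratively across the blocks}, re-entering a bounded-geometry disc at each hyperbolic time of $\partial F_n$. Your closing remark that ``covering the atom by such local components\ldots yields the full statement with the $C^{\sharp\partial F_n}e^{\epsilon n}$ factor'' is exactly the missing iteration, and it cannot be waved away: it is where the per-boundary-point constant $C^{\sharp\partial F_n}$ actually originates, and without it there is no control on either the number of ``local components'' or their individual sizes.

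Two smaller points. The per-step lower bound $\psi_i\ge c\,e^{i\phi^i}$ with $c>1$ is slightly off: $\psi_i$ is a Jacobian, which carries an angle ratio that need not exceed $1$ pointwise; however over a whole gap $[b,a)$ the angle factors telescope, leaving a single uniform constant per gap, which is harmlessly absorbed into $C^{\sharp\partial F_n}$ (the number of gaps being $\le\sharp\partial F_n$). Also, the Pliss inequality you invoke at the gap endpoint $a_j$ uses the $\delta$-hyperbolic time property of $a_j$ for the orbit of $x$; this is correct since $\partial F_n\subset E^\delta_{\Phi^x_i}$, but note it gives $\psi_i^{[0,p)}(x)\ge\mathrm{const}^{\sharp\partial F_n}\,\psi_i^{F_n}(x)$ rather than $\psi_i^{[0,p)}(x)\ge\psi_i^{F_n}(x)$ outright, which is another reason the $C^{\sharp\partial F_n}$ factor is unavoidable and should be tracked from the start rather than added at the end.
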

 For a smooth embedded disc $D$ we let $d_D$ the induced Riemannian distance on $D$.  The proof of Lemma \ref{dist} is based on the following bounded geometric property at hyperbolic times. 
 
\begin{lem} \cite[Lemma 4.2]{AlP08}\label{Lem:Pliss-iterate}
For any $\delta>0$ there is $\gamma>0$ and   $N\in \mathbb N$  such that for any disk $D\subset U$    of radius $\gamma$ tangent to $\mathfrak C_i$,  for any $x\in D$ with $d_D(x, \partial D)\geq \frac{\gamma}{2}$,  for any  $ E^\delta_{\Phi_x^i}\ni n >N$,  the image $f^n(D)$ contains a disk $D_{f^nx}$ centered at $f^n x$ with radius $\gamma$  such that the diameter of $f^{-i}(D_{f^nx})$ decays exponentially fast in $i\in \{0,\cdots,n\}$.
\end{lem}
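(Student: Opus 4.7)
The approach is a classical backward graph-transform argument combined with the Pliss-type expansion estimates supplied at hyperbolic times.

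First, I would fix an adapted metric and an invariant cone field $\mathfrak{C}_i$ around $G_i = E^u \oplus E_1 \oplus \cdots \oplus E_i$ so that $Df(\mathfrak{C}_i(x)) \subset \mathrm{int}\,\mathfrak{C}_i(fx)$ and the minimum expansion of $Df$ on any nonzero vector of $\mathfrak{C}_i$ is within a factor $e^{-\epsilon}$ of $\|Df|_{E_i}\|$, for a prescribed $\epsilon \ll \delta$; this rests on the domination $E_i \oplus_\succ E_{i+1}$. By uniform continuity of $Df$ on $\overline{U}$, I would also choose $\gamma$ small enough that the operator norms $\|Df^{\pm 1}\|$ vary by at most a factor $e^{\epsilon}$ on $\gamma$-balls. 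At any $\delta$-hyperbolic time $n$ of $\Phi^i_x$, the hyperbolic time inequality then gives, for every $0 \leq k < n$ and every orbit segment staying $\gamma$-close to $\{x, fx, \dots, f^n x\}$, the estimate
\begin{equation*}
\inf_{v \in \mathfrak{C}_i,\, v \ne 0} \frac{\|Df^{n-k}(v)\|}{\|v\|} \;\geq\; e^{(n-k)\delta/2}.
\end{equation*}

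Second, I would build $D_{f^n x}$ by a backward graph transform. Let $B$ be the ball of radius $\gamma$ in $G_i(f^n x)$, viewed as an embedded disk near $f^n x$ via $\exp_{f^n x}$ and therefore tangent to $\mathfrak{C}_i$. Define inductively $B_j := f^{-1}(B_{j-1})$ with $B_0 := B$; as long as $B_j$ stays $\gamma$-close to $f^{n-j} x$ and tangent to $\mathfrak{C}_i$ (which is propagated backwards by strict cone invariance for $Df^{-1}$ on the dual cone), the linear estimate of step one gives $\operatorname{diam}(B_j) \leq \gamma\, e^{-j\delta/2}$. The condition $d_D(x,\partial D) \geq \gamma/2$ combined with the exponential smallness of $B_n$ near $x$ forces $B_n \subset D$ once $N$ is large enough, so that setting $D_{f^n x} := B$ yields $D_{f^n x} \subset f^n(D)$, and the required exponential decay of the diameters of $f^{-j}(D_{f^n x})$ is exactly the diameter control on $B_j$.

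The main obstacle is upgrading this pointwise linear contraction to a uniform geometric contraction of diameters along the backward pullbacks: one has to exclude wild nonlinearity of $f^{-j}$ on the slightly-varying pieces $B_j$. This is where the $\mathcal{C}^{1+\alpha}$ regularity enters decisively: the Hölder continuity of $Df$ together with the summable series $\sum_j e^{-j\alpha\delta/2}$ yields a uniform distortion bound for $Df^{-j}|_{B_j}$, and domination guarantees that the tangent spaces of $B_j$ align exponentially fast with $G_i$ in backward time, so that the rates actually match those provided by the hyperbolic time condition. Keeping all constants dependent only on $\delta$ and the global data, and not on $n$ or the particular $x$, is what pins down the final choice of $N$ and $\gamma$.
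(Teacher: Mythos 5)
The paper does not prove this statement itself; it cites \cite[Lemma 4.2]{AlP08}, so your proposal is compared against the standard argument there and in \cite{ABV}.

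Your sketch identifies the right ingredients (Pliss expansion at hyperbolic times, cone fields, H\"older distortion control), but the ``backward graph transform'' as you set it up does not work. You take $B$ to be the exponential image of a linear $\gamma$-ball in $G_i(f^nx)$ and pull it back by $f^{-1}$, asserting that tangency to $\mathfrak C_i$ is ``propagated backwards by strict cone invariance for $Df^{-1}$ on the dual cone.'' This has the cone invariance reversed: for the dominated splitting $G_i\oplus_\succ(E_{i+1}\oplus\cdots\oplus E^s)$, it is $Df$ that maps $\mathfrak C_i$ strictly into itself, so $Df^{-1}$ \emph{widens} the cone around $G_i$. After one pullback the tangent planes of $B_1$ may already exit $\mathfrak C_i$, and the angular deviation from $G_i$ is amplified (not damped) at each further backward step, so the ``linear estimate of step one'' and the diameter bound $\operatorname{diam}(B_j)\le\gamma e^{-j\delta/2}$ are not available for the disks $B_j$ you constructed. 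Relatedly, even granting the diameter estimate, ``$B_n$ is small and near $x$'' does not yield $B_n\subset D$: $B_n$ and $D$ are two disks of the same dimension passing near $x$, and nothing in your argument forces them to coincide locally. The conclusion ``setting $D_{f^nx}:=B$ yields $D_{f^nx}\subset f^n(D)$'' is therefore not established.

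Both problems come from the same source and have the same fix: the construction must go \emph{forward} from $D$. Since $D$ is tangent to $\mathfrak C_i$ and $Df(\mathfrak C_i)\subset\operatorname{int}\mathfrak C_i$, every forward image $f^j(D)$ is tangent to $\mathfrak C_i$; one then lets $D_j$ be the connected component of $f^j(D)\cap B(f^jx,\gamma)$ containing $f^jx$ and shows inductively (using the Pliss inequalities at the hyperbolic time $n$, together with uniform continuity/H\"older bounded distortion on $\gamma$-small pieces) that the inner radius of $D_n$ reaches $\gamma$ once $n>N$, and that $\operatorname{diam}(f^{-j}D_n)$ decays exponentially. The containments $f^{-j}D_n\subset f^{n-j}(D)$ are then automatic, and that is what guarantees both the cone tangency of the pullbacks and the final inclusion $D_{f^nx}\subset f^n(D)$. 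Your closing paragraph on H\"older distortion and the choice of constants is fine once the argument is reoriented in this way, but as written the proposal inverts the direction of cone invariance and leaves the key inclusion unproved.
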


 %\begin{lem}\label{abs}For Lebesgue almost every point $x$ with $\underline{d}_i^+(x)=1$,  such that some  $\mu\in pw(x)$ admits an  $i$-hyperbolic SRB component,  we have that $\mu$ is ergodic $x$ lies in the basin of $\mu$.    \end{lem}\begin{proof}Argue by contradiction.  As there are only countable ergodic hyperbolic SRB measures, we may assume that for some $E$ where $\overline{d}_i$converges uniformly to $1$ and  with $Leb(E)>0$,  and some hyperbolic ergodic SRB measure $\nu$ any $x$ in $E$ has a measure $\mu\in pw(x)$ with $\nu$ as a strict component. We take a smmoth $i$-disc $D$ such that $Leb_D(E)>0$ and  take $x$ be a Lebesgue density points of $E$ wrt $Leb_D$.  It follows then from the absolute continuity of the stable foliation of $\mu$ that some points of $E$ should lie on stable manifolds of typical $\nu$-points.  In particular these points are in the basin of $\nu=\mu$.  Contradiction.  \end{proof}

\subsection{Dynamical density on hyperbolic times}

Let $D$ be a disc tangent to $\mathfrak C_i$. We consider a subset $\mathcal D$ of $D$ such that $\overline{d}\left(E^\delta_{\Phi_i^x}\right)>0$ for any $x\in \mathcal D$. Then 
$x\in D$ is said to be a \emph{dynamical density point on $\delta$-hyperbolic times of  $\mathcal D$ with respect to $D$}
when
$$\lim_{n\to\infty,~n\in E^\delta_{\Phi^i_x}}\frac{{\rm Leb}_D\left(f^{-n}D_{f^nx}\cap \mathcal D\right)}{{\rm Leb}_D\left( f^{-n}D_{f^nx}\right)}=1,$$

where $D_{f^nx}$  is the disc of radius $\gamma=\gamma(\delta)$ at $f^nx$ given by Lemma \ref{Lem:Pliss-iterate}. We will use the following statement proved in \cite{bd}:

\begin{prop}  \cite[Theorem 3.1]{bd}\label{Thm:density-point}
With the above notations, ${\rm Leb}_D$-a.e.   $x \in \mathcal D$  is a dynamical density point of $\mathcal D$ with respect to $D$.
\end{prop}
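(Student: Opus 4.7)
The plan is to reduce the statement to the classical Lebesgue differentiation theorem on $D$ by exploiting the bounded geometric control at hyperbolic times provided by Lemma \ref{Lem:Pliss-iterate}. Concretely, I would aim to show that along $n\in E^\delta_{\Phi^i_x}$ the preimage discs $V_n(x):=f^{-n}(D_{f^nx})$ form a sequence of neighborhoods of $x$ of uniformly bounded eccentricity: there exist a constant $\kappa>0$ (independent of $x$ and $n$) and radii $r_n=r_n(x)\to 0$ such that
$$B_D(x,\kappa^{-1}r_n)\subset V_n(x)\subset B_D(x,r_n),$$
where $B_D$ denotes balls for the induced Riemannian distance $d_D$.

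The first step would be a bounded distortion estimate for $Df^{-n}$ restricted to $f^n(D)$ at the hyperbolic time $n$. This is the standard consequence of combining the $\delta$-hyperbolic-time hypothesis (which, via Pliss's selection, gives uniform exponential control on backward contraction rates on every sub-orbit $\{f^l x\}_{l\leq n}$), the H\"older continuity of the bundle $G_i$ near $\Lambda$, and the $\mathcal C^{1+}$ regularity of $f$. Together these yield an estimate of the form $\|Df^{-n}|_{T_y f^n(D)}\|/\|Df^{-n}|_{T_{f^nx} f^n(D)}\|\leq C$ uniformly for $y\in D_{f^nx}$. Pulling back the round disc $D_{f^nx}$ of radius $\gamma$ through such a bounded-distortion diffeomorphism then delivers the sandwich inclusion above, with $r_n$ comparable to $\gamma\cdot \|Df^{-n}|_{T_{f^nx}f^n(D)}\|$ and hence decaying exponentially.

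The second step would combine this with the classical Lebesgue differentiation theorem applied inside the smooth disc $D$: for ${\rm Leb}_D$-almost every $x\in \mathcal D$ one has
$$\lim_{r\to 0}\frac{{\rm Leb}_D(B_D(x,r)\cap \mathcal D)}{{\rm Leb}_D(B_D(x,r))}=1.$$
For such a Lebesgue density point $x$, the bounded eccentricity yields
$$1\geq \frac{{\rm Leb}_D(V_n(x)\cap \mathcal D)}{{\rm Leb}_D(V_n(x))}\geq 1-\kappa^{\dim D}\cdot \frac{{\rm Leb}_D(B_D(x,r_n)\setminus \mathcal D)}{{\rm Leb}_D(B_D(x,r_n))},$$
and the right-hand side tends to $1$ as $n\to\infty$ along $E^\delta_{\Phi^i_x}$, since $r_n\to 0$. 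The assumption $\overline d(E^\delta_{\Phi^i_x})>0$ guarantees the set of such times is infinite, so the stated limit is meaningful.

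The main obstacle lies in the first step. Although the Pliss-type control at hyperbolic times is standard, upgrading scalar contraction estimates along the single orbit of $x$ into a uniform \emph{distortion} bound over the whole disc $D_{f^nx}$ requires the H\"older control of the dominated splitting to compare derivatives along nearby orbits inside the cone $\mathfrak C_i$. This is exactly the geometric input already implicit in Lemma \ref{Lem:Pliss-iterate}; once extracted, everything else is soft measure theory.
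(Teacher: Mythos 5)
The pivotal step in your argument is the sandwich $B_D(x,\kappa^{-1}r_n)\subset V_n(x)\subset B_D(x,r_n)$ with $\kappa$ independent of $n$ and $x$, which is what lets you invoke the regular-family version of the Lebesgue differentiation theorem. This fails as soon as $\dim G_i>1$, and that is the real gap --- not the distortion estimate you flag at the end, which is indeed a standard consequence of the hyperbolic-time hypothesis and H\"older continuity of the bundles. Up to bounded distortion, $V_n(x)=f^{-n}(D_{f^nx})$ is the image of a round $\gamma$-disc under the linear map $Df^{-n}_{f^nx}|_{T_{f^nx}f^n(D)}$, hence an ellipsoid whose eccentricity (the ratio of outer to inner radius, i.e.\ the constant you would need for $\kappa$) equals the condition number $\|Df^n_x|_{T_xD}\|\cdot\|Df^{-n}_{f^nx}|_{T_{f^nx}f^n(D)}\|$. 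Since $D$ is tangent to $\mathfrak C_i$ and $G_i=E^u\oplus E_1\oplus\cdots\oplus E_i$ carries a dominated splitting, $Df^n$ expands $E^u$ at the uniformly hyperbolic rate while the $\delta$-hyperbolic-time hypothesis only controls the (much slower) rate along $E_i$; these rates are separated by the domination constant, so the condition number grows exponentially in $n$. Your distortion bound $\|Df^{-n}_y\|/\|Df^{-n}_{f^nx}\|\leq C$ controls how the \emph{top} singular value varies from point to point over the disc; it says nothing about the \emph{spread} of singular values at a fixed point, which is what governs $\kappa$.

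Consequently the differentiation basis $\{V_n(x)\}$ has unbounded eccentricity with $x$-dependent orientation, and such bases need not differentiate even bounded $L^1$ densities (Nikodym/Besicovitch-type phenomena), so the Lebesgue differentiation theorem cannot be applied as you propose. The statement is nevertheless true; the argument in the cited reference \cite{bd} (following the Alves--Pinheiro template) bypasses the Lebesgue differentiation theorem and instead proves a Vitali covering property directly for the dynamical family $\{V_n(x)\}_{x\in \mathcal D,\ n\in E^\delta_{\Phi^i_x}}$, using the bounded geometry from Lemma \ref{Lem:Pliss-iterate} to compare overlapping preimage discs after pushing forward by the smaller hyperbolic time and then extracting an almost-disjoint subcover. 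Your bounded-eccentricity shortcut is valid only in the one-dimensional case $\dim G_i=1$.
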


\section{Proof of Proposition \ref{fond}}
As already mentionned   the proof of Theorem \ref{mmain} is reduced to the proof of  Proposition \ref{fond}.   We   follow the variational approach used in \cite{Bur, bd}.  In these last works
the assumptions ensure the existence of a set with positive Lebesgue measure on a smooth $l$-disc such that the set of  \textit{geometric} times (as defined in \cite{Bur}) has positive upper density. Then one may build an empirical measure  $\mu$ by pushing this disc   around these times such that the (invariant) measure $\mu$ has $l$ positive Lyapunov exponents and its entropy is larger than or equal to the sum of these $l$ exponents. Then it follows easily from the contexts in \cite{Bur,bd} that $\mu$ has exactly $l$ positive exponents,  therefore $\mu$ satisfies Pesin entropy formula.  \\

 Here the method of building SRB measure is slightly different and may be roughly resumed as follows.  We divide the topological  basin of attraction into the level sets  $\mathcal L_i=\{ \alpha_i=1 >\alpha_{i+1}\}$,  $i=1,\cdots, k$.  Then  geometric times  w.r.t.  a  disc tangent to the cone $\mathfrak C_i$ have full density at points in $\mathcal L_i$, because,  in our settings,    these geometric times coincide with the hyperbolic times for $\phi_i$.    
In this way the associated empirical  measures have entropy larger than the sum of the  $\mathrm{dim}(G_i)$ first exponents,  but these measures may have other positive exponents in general.  However  hyperbolic times w.r.t.  $\phi_{i+1}$ have not full density
on $\mathcal L_i\subset  \{1 >\alpha_{i+1}\}$. Therefore we may find  an empirical measure  with nonpositive $(i+1)^{th}$ center positive exponent, which is consequently an SRB measure.  In other terms we do not choose the empirical measures to satisfy the appropriate volume estimate implying  the lower bound on the entropy (as in \cite{Bur,bd} where geometric times do not have a priori full density)  but rather to ensure   the absence of other positive Lyapunov exponents (whereas this property is almost automatic in \cite{Bur,bd}).

\subsection{The hyperbolic case}
We first deal with the case of  hyperbolic SRB measures, i.e.  we prove the first item of Proposition \ref{fond}.  We let $\mathcal A_i:=\left\{ \alpha_i=1 \text{ and }\beta_{i+1}>0\right\}$.

\begin{prop}\label{pro:mar}
Lebesgue a.e.  point $x\in \mathcal A_i$ lies in the basin of an ergodic hyperbolic SRB measure. 
\end{prop}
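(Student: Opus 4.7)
First I would apply Egorov's theorem to the two density conditions defining $\mathcal{A}_i$: for every $\epsilon > 0$ one obtains a measurable $\mathrm{E} \subset \mathcal{A}_i$ with $\Leb(\mathcal{A}_i \setminus \mathrm{E}) < \epsilon$ and constants $\delta, \eta > 0$ such that $\underline{d}_{\phi^i} = 1$ uniformly on $\mathrm{E}$ and $\limsup_M \overline{d}\bigl(F^{\delta,M}_{-\Phi^{i+1}_x}(M)\bigr) \geq \eta$ uniformly in $x \in \mathrm{E}$. A Fubini-type argument using a lamination by smooth $\mathrm{dim}(G_i)$-discs tangent to the cone $\mathfrak{C}_i$ (for instance, local center-unstable plaques iterated and re-saturated) produces a disc $D$ with $\mathcal{D} := D \cap \mathrm{E}$ of positive $\Leb_D$-measure, and Proposition~\ref{Thm:density-point} supplies a dynamical density point $x \in \mathcal{D}$ on $\delta$-hyperbolic times of $\phi^i$.

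Next, for each $n, M, N$ with $N \ll M$, I would introduce the measurable set-valued time map
\[
F_n^{M,N}(y) := E^{\delta}_{\Phi^i_y} \cap [0,n) \cap G^{\delta,M}_{-\Phi^{i+1}_y}\left(\left(N\right)\right)
\]
and the averaged empirical measures $\nu_n^{M,N} := \int \mu_y^n[F_n^{M,N}(y)] \, d\mu(y)$, with $\mu := \Leb_D|_{\mathcal{D}}/\Leb_D(\mathcal{D})$. Uniform full density of $E^{\delta}_{\Phi^i_y}$ together with Pliss's inequality~(\ref{Pliss}) applied to $-\phi^{i+1}$ ensures a positive uniform lower bound for $\sharp F_n^{M,N}(y)/n$ along a suitable subsequence. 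After diagonalising in $n$, then $N$, then $M$, one obtains an invariant non-zero limit $\nu$. Since $F_n^{M,N}(y) \subset E^{\delta}_{\Phi^i_y}$, Lemma~\ref{nouv} applied to $\phi^i$ gives $\phi^i_* \geq \delta$ $\nu$-a.e., and since $F_n^{M,N}(y) \subset G^{\delta,M}_{-\Phi^{i+1}_y}\left(\left(N\right)\right)$, Lemma~\ref{nouvv} applied to $-\phi^{i+1}$ gives $\phi^{i+1}_* \leq -\delta/2$ $\nu$-a.e. Domination then yields $\phi^j_* > 0$ for $j \leq i$ and $\phi^j_* < 0$ for $j \geq i+1$ on all of $\nu$, so every ergodic component of $\nu$ is hyperbolic with unstable index exactly $i$.

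The entropy of $\nu$ is bounded from below via the generalised Misiurewicz estimate of Lemma~\ref{lem:Mis} applied to $\mu$ and $F_n^{M,N}$, combined with the Gibbs bound~(\ref{eq:gibbs}) of Lemma~\ref{dist} for a partition $P$ of small diameter. The latter rewrites the Shannon term as
\[
-\log \mu\bigl(P^{F_n^{M,N}}(y)\bigr) \geq \log \psi_i^{F_n^{M,N}}(y) - O(\sharp \partial F_n^{M,N}) - \epsilon n - O(1),
\]
and dividing by $\int \sharp F_n^{M,N} \, d\mu$ and sending $n \to \infty$, then $N, M \to \infty$, $\epsilon \to 0$ and $\mathrm{diam}(P) \to 0$ yields $h(\nu) \geq \int \log \psi_i \, d\nu$, which equals the sum of the positive Lyapunov exponents of $\nu$. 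Ruelle's inequality forces equality, so $\nu$ satisfies Pesin's entropy formula and each of its ergodic components is an ergodic hyperbolic SRB measure of unstable index $i$. The bounded distortion of Lemma~\ref{Lem:Pliss-iterate} at $\delta$-hyperbolic times, combined with the density-point property of $x$, identifies $\nu$ as a positive-mass component of some $\mu_\infty \in pw(x)$, forcing $x$ to lie in the basin of one of these ergodic hyperbolic SRB components.

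The main obstacle will be the simultaneous control of the boundary term $\sharp \partial F_n^{M,N}$ and the partition-entropy term $H_\mu(F_n^{M,N})$ appearing in Lemma~\ref{lem:Mis}. The midly hyperbolic times with the $\left(\left(N\right)\right)$-window were introduced in Section~2.3 precisely for this purpose: $F_n^{M,N}(y)$ is supported on entire connected blocks of $F^{\delta,M}_{-\Phi^{i+1}_y}$, whose number is at most $n/M$, so $\sharp \partial F_n^{M,N} \leq 2n/M$ becomes negligible after normalisation once $M$ is sent to infinity, and the same block structure keeps the combinatorial complexity of $F_n^{M,N}(y)$ subexponential in $n$.
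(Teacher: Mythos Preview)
Your overall strategy---build an empirical-type measure supported on times that are simultaneously $\phi^i$-hyperbolic and midly $(-\phi^{i+1})$-hyperbolic, apply Lemmas~\ref{nouv} and~\ref{nouvv} to force $\phi^i_*>0>\phi^{i+1}_*$, and then use Lemma~\ref{lem:Mis} together with the Gibbs bound of Lemma~\ref{dist} to get Pesin's formula---is the paper's strategy. Two points, however, require correction.

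\textbf{The final step is a genuine gap.} Having shown that $\nu$ is a hyperbolic SRB measure and that $\nu$ is a component of some $\mu_\infty\in pw(x)$ does \emph{not} force $x$ into the basin of any ergodic component of $\nu$: membership in the basin means $\mu_x^n\to\mu$ along \emph{all} $n$, not that some subsequential limit has an SRB piece. The paper avoids this by arguing by contradiction. One assumes a set $\mathcal D_i\subset\mathcal A_i$ of positive Lebesgue measure lying in no SRB basin, transfers to a disc $D$ by Fubini, and constructs the SRB measure $\nu$ from points of $\mathcal D_i$ on $D$. Then, at a $\nu$-typical point one has a Pesin stable manifold of dimension $\mathrm{dim}(\mathbf M)-\mathrm{dim}(G_i)$; the absolute continuity of the stable lamination, combined with the dynamical density of $\mathcal D_i$ at $\delta$-hyperbolic times (Proposition~\ref{Thm:density-point}), forces a $\Leb_D$-positive subset of $\mathcal D_i$ to intersect the basin of an ergodic component of $\nu$, contradicting the choice of $\mathcal D_i$. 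Your direct argument should be replaced by this contradiction scheme; the density point $x$ alone cannot carry the conclusion.

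\textbf{The time set needs the $(P)$-window on the $\phi^i$-side.} You take $F_n^{M,N}(y)=E^{\delta}_{\Phi^i_y}\cap[0,n)\cap G^{\delta,M}_{-\Phi^{i+1}_y}\left(\left(N\right)\right)$. Two problems arise. First, $\underline{d}_{\phi^i}(x)=1$ means $\lim_\delta\lim_P\underline{d}\bigl(E^{\delta}_{\Phi^i_x}(P)\bigr)=1$; the raw set $E^{\delta}_{\Phi^i_x}$ need not have density close to~$1$, so your ``uniform full density of $E^{\delta}_{\Phi^i_y}$'' is unjustified. Second, and more seriously, Lemma~\ref{dist} requires $\partial F_n\subset E^{\delta}_{\Phi^i_y}$, which fails for your $F_n^{M,N}$: the intersection with $G^{\delta,M}_{-\Phi^{i+1}_y}\left(\left(N\right)\right)$ cuts the blocks at points that are generally not $\phi^i$-hyperbolic, and the intersection with the raw $E^{\delta}_{\Phi^i_y}$ can itself have $O(n)$ boundary points. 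The paper uses $E^{\delta}_{\Phi^i_y}(P)$ instead, and then passes to the sub-collection $E_n^{M,N,P}$ of maximal intervals $]k,l]$ with $k,l\in E^{\delta}_{\Phi^i_y}$ and $]k,l]\subset G^{\delta,M}_{-\Phi^{i+1}_y}\left(\left(N\right)\right)\cap E^{\delta}_{\Phi^i_y}(P)\cap[1,n]$; this both enforces $\partial E_n^{M,N,P}\subset E^{\delta}_{\Phi^i_y}$ and gives the boundary count $\sharp\partial F\leq\lceil n/P\rceil+\lceil n/N\rceil+\lceil n/M\rceil$, hence $H_\mu(F)=o(n)$ and a negligible error in Lemma~\ref{lem:Mis}. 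Your last paragraph's claim that $\sharp\partial F_n^{M,N}\leq 2n/M$ follows from the $F^{\delta,M}$-block structure ignores the additional cuts coming from $E^{\delta}_{\Phi^i_y}$.

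A smaller point: because $\overline{d}$ is a $\limsup$, different $y$'s in $\mathcal D$ realise the density $>\lambda$ at different times; the paper handles this with a Borel--Cantelli argument producing sets $A_n^M$ with $\Leb_D(A_n^M)\geq n^{-2}$, and integrates the empirical measures over $\Leb_D^{A_n^M}$ rather than over a fixed $\mu$.
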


The end of this subsection is devoted to the proof of Proposition \ref{pro:mar}. For any $\delta>0$ and $\lambda>0$ we let $\mathcal B_i(\lambda, \delta)$ be the  the  subset of points $x\in \mathcal A_i$  satisfying \begin{equation}\label{eq:b}\lim_P^{\nearrow}\liminf_{M\rightarrow \infty}\overline{d}\left(G_{-\Phi_{i+1}^x}^{\delta,M}\cap E^{\delta}_{\Phi_i^x}(P) \right)>\lambda.
\end{equation}

\begin{lem}\label{lem:abo}
$$\mathcal A_i\subset\bigcup_{\lambda, \delta\in \mathbb Q^+}\mathcal B_i(\lambda, \delta).$$
\end{lem}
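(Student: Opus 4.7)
The goal is to show that each $x \in \mathcal A_i$ belongs to $\mathcal B_i(\lambda, \delta)$ for some rationals $\delta, \lambda > 0$. The strategy is to extract a midly hyperbolic density for $-\phi^{i+1}$ from the assumption $\beta_{i+1}(x) > 0$ via the Pliss estimate (\ref{Pliss}), to extract a density close to $1$ for $E^\delta_{\Phi_i^x}(P)$ from $\alpha_i(x) = 1$, and to intersect the two.

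First, by the equivalence $(1) \Leftrightarrow (2)$ in Lemma \ref{equiv}, $\beta_{i+1}(x) > 0$ forces $\overline{d}^-_{-\phi^{i+1}}(x) > 0$, so I may pick a rational $\delta_0 > 0$ and a constant $b > 0$ such that
$$\liminf_M \overline{d}\left(F^{\delta,M}_{-\Phi_{i+1}^x}(M)\right) \geq 2b$$
holds for every rational $\delta \in (0,\delta_0]$ (this persists when $\delta$ shrinks, since $F^{\delta,M}$ grows as $\delta$ decreases). For each such $\delta$ the Pliss inequality (\ref{Pliss}) furnishes $\alpha(\delta) > 0$, depending on $\delta$ and $\|\phi^{i+1}\|_\infty$, with $\overline{d}(G^{\delta,M}_{-\Phi_{i+1}^x}) \geq \alpha(\delta)\, \overline{d}(F^{\delta,M}_{-\Phi_{i+1}^x}(M))$ for $M$ large, whence
$$\liminf_M \overline{d}\left(G^{\delta,M}_{-\Phi_{i+1}^x}\right) \geq 2\alpha(\delta) b.$$

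Next, $\alpha_i(x) = 1$ implies $\sup_M \underline{d}(E^\delta_{\Phi_i^x}(M)) \to 1$ as $\delta \to 0^+$, so I pick a rational $\delta \in (0,\delta_0]$ small enough that $\sup_M \underline{d}(E^\delta_{\Phi_i^x}(M)) > 1 - \alpha(\delta) b$. The elementary inequality $d_n(A \cap B) \geq d_n(A) - d_n(\overline B)$, together with $\limsup_n(u_n - v_n) \geq \limsup_n u_n - \limsup_n v_n$ for bounded sequences, yields
$$\overline{d}\left(G^{\delta,M}_{-\Phi_{i+1}^x} \cap E^\delta_{\Phi_i^x}(P)\right) \geq \overline{d}\left(G^{\delta,M}_{-\Phi_{i+1}^x}\right) - \left(1 - \underline{d}(E^\delta_{\Phi_i^x}(P))\right).$$
Taking $\liminf_M$ and then $\lim_P^{\nearrow}$, the right-hand side is bounded below by $2\alpha(\delta) b - \alpha(\delta) b = \alpha(\delta) b > 0$, so $x \in \mathcal B_i(\lambda,\delta)$ for any rational $\lambda \in (0, \alpha(\delta) b)$.

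The step I expect to require the most care is the last choice of $\delta$: the Pliss constant $\alpha(\delta)$ typically decays linearly with $\delta$, so shrinking $\delta$ in order to make $\alpha_i(x) = 1$ become effective also shrinks the threshold $\alpha(\delta) b$ that $1 - \sup_M \underline{d}(E^\delta_{\Phi_i^x}(M))$ must beat. Since for each fixed rational $\delta > 0$ the constant $\alpha(\delta)$ is strictly positive and $\sup_M \underline{d}(E^\delta_{\Phi_i^x}(M)) \to 1$ as $\delta \to 0$, one can always select a rational $\delta$ sufficiently small for the required inequality to hold; making this balancing rigorous is the delicate point of the argument.
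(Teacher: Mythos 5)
Your proof has a genuine gap at exactly the point you flag as "the delicate point." You fix a rational $\delta_0$ with $\liminf_M\overline{d}\bigl(F^{\delta,M}_{-\Phi_{i+1}^x}(M)\bigr)\geq 2b$ for $\delta\leq\delta_0$, apply Pliss at the scale $\delta$ to get $\liminf_M\overline{d}\bigl(G^{\delta,M}_{-\Phi_{i+1}^x}\bigr)\geq 2\alpha(\delta)b$, and then want to choose $\delta$ so small that $1-\sup_M\underline{d}\bigl(E^{\delta}_{\Phi_i^x}(M)\bigr)<\alpha(\delta)b$. But both sides of this inequality tend to $0$ as $\delta\to 0$, and there is no control over the rate at which $1-\sup_M\underline{d}\bigl(E^{\delta}_{\Phi_i^x}(M)\bigr)$ vanishes --- for a given point $x$ it could easily decay slower than the linearly decaying Pliss constant $\alpha(\delta)$, in which case the required $\delta$ does not exist. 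Asserting "one can always select a rational $\delta$ sufficiently small" is precisely the unjustified step, not merely a step needing more care.

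The paper avoids the race entirely by decoupling the two parameters. It first fixes $\delta'>0$ with $\liminf_M\overline{d}\bigl(F^{\delta',M}_{-\Phi_{i+1}^x}(M)\bigr)>0$ and applies Pliss at scale $\delta'$, obtaining a \emph{fixed} positive number $c:=\liminf_M\overline{d}\bigl(G^{\delta',M}_{-\Phi_{i+1}^x}\bigr)>0$. Since $c$ no longer depends on the eventually chosen $\delta$, one can then use $\alpha_i(x)=1$ to pick $\delta''>0$ with $\lim_P^{\nearrow}\underline{d}\bigl(E^{\delta''}_{\Phi_i^x}(P)\bigr)>1-c/2$: here the threshold $1-c/2$ is fixed, so such a $\delta''$ always exists. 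Finally, the paper passes to a common rational $\delta<\min(\delta',\delta'')$ and $\lambda<c/2$ and uses the monotonicities $\overline{d}\bigl(G^{\delta,M}_{-\Phi_{i+1}^x}\bigr)\geq\overline{d}\bigl(G^{\delta',M}_{-\Phi_{i+1}^x}\bigr)$ and $\underline{d}\bigl(E^{\delta}_{\Phi_i^x}(P)\bigr)\geq\underline{d}\bigl(E^{\delta''}_{\Phi_i^x}(P)\bigr)$ together with $\overline{d}(A\cap B)\geq\overline{d}(A)+\underline{d}(B)-1$ to conclude $x\in\mathcal B_i(\lambda,\delta)$. In short: you should fix the Pliss scale first to lock down a positive lower bound, choose the second scale against that fixed bound, and only at the end pass to a single $\delta$ via monotonicity --- the "balancing" you worry about simply need not occur.
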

\begin{proof}
For any $x$ with  $\beta_{i+1}(x)>0$ we have  $\overline{d}^-_{-\phi^{i+1}}(x)>0$ by Lemma \ref{equiv}.  Therefore  there is $\delta'>0$ with $\liminf_{M\rightarrow \infty}\overline{d}\left(F_{-\Phi_{i+1}^x}^{\delta',M}(M)\right)>0$ and then  it  follows from Inequality (\ref{Pliss}) that $\liminf_{M\rightarrow \infty}\overline{d}\left(G_{-\Phi_{i+1}^x}^{\delta',M}\right)>0$. When moreover $\alpha_{i}(x)=1$, there is $\delta''>0$ such that 
$\lim_{P}^\nearrow\underline{d}\left(E^{\delta''}_{\Phi_i^x}(P)\right) >1-\frac{1}{2}\liminf_{M\rightarrow \infty}\overline{d}\left(G_{-\Phi_{i+1}^x}^{\delta',M}\right)$. Therefore by taking $\delta, \lambda\in \mathbb Q$ with $0<\delta<\min(\delta',\delta'')$ and $0<\lambda<\frac{1}{2}\liminf_{M\rightarrow \infty}\overline{d}\left(G_{-\Phi_{i+1}^x}^{\delta',M}\right)$  we get 

\begin{align*}
\overline{d}\left(G_{-\Phi_{i+1}^x}^{\delta,M}\cap E^{\delta}_{\Phi_i^x}(P) \right)&\geq\overline{d}\left(G_{-\Phi_{i+1}^x}^{\delta,M}\right) +\underline{d}\left( E^{\delta}_{\Phi_i^x}(P) \right)-1,\\
&\geq\overline{d}\left(G_{-\Phi_{i+1}^x}^{\delta',M}\right) +\underline{d}\left( E^{\delta''}_{\Phi_i^x}(P) \right)-1,\\\lim_P^{\nearrow}\liminf_{M\rightarrow \infty}\overline{d}\left(G_{-\Phi_{i+1}^x}^{\delta,M}\cap E^{\delta}_{\Phi_i^x}(P) \right)& \geq\liminf_M\overline{d}\left(G_{-\Phi_{i+1}^x}^{\delta',M}\right) +\lim^\nearrow_P\underline{d}\left(E^{\delta''}_{\Phi_i^x}(P) \right)-1,\\
&> \liminf_M\overline{d}\left(G_{-\Phi_{i+1}^x}^{\delta',M}\right)+ 1-\frac{1}{2}\liminf_{M\rightarrow \infty}\overline{d}\left(G_{-\Phi_{i+1}^x}^{\delta',M}\right),\\
&>\lambda.
\end{align*}

 The proof is complete.

\end{proof}

We prove now Proposition \ref{pro:mar}.  By  Lemma  \ref{lem:abo} one only needs to consider the subset  $\mathcal B_i(\lambda, \delta)$ for any rational numbers  $\lambda, \delta>0$. Fix  
such parameters $\lambda, \delta$.   By Egorov theorem it is enough to show that Lebesgue almost every point $x$ in  a subset $\mathcal C_i$ of  $\mathcal B_i(\lambda, \delta)$ lies in  in the basin of a hyperbolic SRB measure, where   the limit in $P$ and the liminf in $M$  in $\lim_P^{\nearrow}\liminf_{M\rightarrow \infty}\overline{d}\left(G_{-\Phi_{i+1}^x}^{\delta,M}\cap E^{\delta}_{\Phi_i^x}(P) \right)$ are uniform in $x\in \mathcal C_i$,  i.e.  there exist $M_0$ and $P_0$ such that for $M>M_0$  
$$\forall x\in \mathcal C_i, \ \overline{d}\left(G_{-\Phi_{i+1}^x}^{\delta,M}\cap E^{\delta}_{\Phi_i^x}(P_0) \right)>\lambda.$$

We argue by contradiction. Assume there is a subset $\mathcal D_i$ of $\mathcal C_i$ with positive Lebesgue measure such that any point in $\mathcal D_i$ does not lie in the basin of an ergodic hyperbolic SRB measure. 
 By a standard Fubini argument,  there exists a smooth disc $D$ tangent to $\mathfrak C_i$ with $\Leb_{D}(\mathcal D_i)>0$.  By Proposition  \ref{Thm:density-point} there is a subset $\mathcal E_i$ of $D\cap \mathcal D_i$ with $\Leb_{D}(\mathcal E_i)>0$ such that any $x\in \mathcal E_i$ is a  Lebesgue density  point  for $\Leb_{D}$ of $\mathcal D_i$ at  $\delta$-hyperbolic times, i.e.  $$\lim_{n\to\infty,~n\in E^\delta_{\Phi_i^x}}\frac{{\rm Leb}_D\left(f^{-n}D_{f^nx}\cap \mathcal D_i \right)}{{\rm Leb}_D\left(f^{-n}D_{f^nx}\right)}=1.$$
% Without loss of generality we may assume the above convergence is uniform in $x\in \mathcal E_i$. 
    By Borel-Cantelli Lemma, for any $M>M_0$  there are  an infinite sequence $\mathfrak n_M$  and Borel subsets $A_n^M\subset \mathcal E_i$,  $n\in \mathfrak n_M$, with $\Leb_{D}(A_n^M)\geq \frac{1}{n^2}$ such that 
$$\forall n\in \mathfrak n_M\ \forall y\in A_n^M, \ d_n\left(G_{-\Phi_{i+1}^x}^{\delta,M}\cap E^{\delta}_{\Phi_i^x}(P_0) \right)>\lambda.$$

  We  consider the measures $\left(\mu_n^{M,N,P}\right)_{n\in \mathfrak n_M}$ and the associated probability measures $\left(\nu_n^{M,N,P}\right)_{n\in \mathfrak n_M}$ defined by 
$$\mu_n^{M,N,P}=\int \mu_x^n[G_{-\Phi_{i+1}^x}^{\delta,M}\left(\left(N\right)\right)\cap E^{\delta}_{\Phi_i^x}(P) ]\, d\mathrm{Leb}_{D}^{A_n^M}(x)$$
and
$$\nu_n^{M,N,P}=\frac{\mu_n^{M,N,P}}{\mu_n^{M,N,P}(\mathbf M)} $$
where $\Leb_D^{A_n^M}(\cdot)=\frac{\Leb_D(A_n^M\cap \cdot)}{\Leb_D(A_n^M)}$  is the probability measure induced by $\Leb_D$ on $A_n^M$.  

By extracting subsequences we may assume the  following successive limits  exist  $$\mu=\lim_P\lim_N\lim_M\lim_{n\in \mathfrak n_M}\mu_n^{M,N,P},$$ 
 $$\nu=\lim_P\lim_N\lim_M\lim_{n\in \mathfrak n_M}\nu_n^{M,N,P}.$$ The intermediate limits are denoted by $\mu^{M,N,P}$,  $\mu^{N,P}$,  $\mu^{P}$ and $\nu^{M,N,P}$,  $\nu^{N,P}$,  $\nu^{P}$.  Observe that $\mu\geq \lambda\nu$. 

The measures $\mu_n^{M,N,P}$ are components of $\zeta_n^{M,N}$ with 
$$\zeta_n^{M,N}=\int \mu_x^n[G_{-\Phi_{i+1}^x}^{\delta,M}\left(\left(N\right)\right) ]\, d\mathrm{Leb}_{D}^{A_n^M}(y).$$
Without loss of generality we may assume the successive limits in $n\in \mathfrak n_M$, in $M$ and in $N$ also exist for these sequences. Let $\zeta=\lim_N\lim_M\lim_{n\in \mathfrak n_M}\zeta_n^{M,N}$ be the limit measure.  The measure $\mu$ is a component of $\zeta$ and by Lemma \ref{nouvv} we have $ \phi_{i+1}^*(y)\leq -\delta/2$ for $\zeta$-a.e. $y$,  therefore for $\nu$-a.e.  $y$.  

Similarly,  since $E^{\delta}_{\Phi_i^x}\subset F_{\Phi_i^x}^{\delta, P}$   the measures $\nu_n^{M,N,P}$ are components of $\eta_n^{M,P}$ with 
$$\eta_n^{M,P}=\int \mu_x^n[ F^{\delta,P}_{\Phi_i^x}(P) ]\, d\mathrm{Leb}_{D}^{A_n^M}(y).$$
We may again assume the limits $\eta=\lim_P\lim_M \lim_{n\in \mathfrak n_M}\eta_n^{M,P}$ 
exist, so that $\nu$ is a component of $\eta$. 
 By  applying  Lemma  \ref{nouv} with  $\left(\xi_{n}^q\right)_{q,n}=\left(\mathrm{Leb}_{D}^{A_{n}^{M}}\right)_{M,n}$ we have $\phi^i_*(y)\geq \delta $ for $\eta$-a.e. $y$, therefore for $\nu$-a.e.  $y$.

Finally we check that $h(\nu)\geq \int \psi_i\, d\nu$ which will imply that $\nu$ is a hyperbolic SRB measure.  Fix $\epsilon>0$ and let $\alpha>0$ as given in Lemma \ref{dist} so that the volume estimate (\ref{eq:gibbs}) holds for the set of $\delta$-hyperbolic times $E^\delta_{\Phi^i_x}$.  Take a partition $Q$ with diameter less than $\alpha$ and with $\xi(\partial Q)=0$ for any $\xi\in \{\nu^{M,N,P},  \nu^{N,P},  \nu^{P}, \nu \ :\ M,N,P\}$.  By  applying Lemma \ref{lem:Mis} with $\mu= \mathrm{Leb}_{D}^{A_n^M}$ and $F(x)=G_{-\Phi_{i+1}^x}^{\delta,M}\left(\left(N\right)\right)\cap E^{\delta}_{\Phi_i^x}(P)\cap [1,n]$  for $n\in \mathfrak n_M$ : 
\hspace{-0,4cm}\begin{align}\label{ddf}\left(\int \sharp F(x)\, d\mathrm{Leb}_{D}^{A_n^M}(x)\right)\frac{H_{\nu_n^{M,N,P}}(Q^m)}{nm}&\geq -\frac{1}{n}\int \log \mathrm{Leb}_{D}^{A_n^M}\left(Q^{F(x)}(x)\right)\, d\mathrm{Leb}_{D}^{A_n^M}(x)\nonumber \\ &-\frac{H_{\mathrm{Leb}_{D}^{A_n^M}}(F)}{n}- \frac{3m\log \sharp P }{n}\int \sharp \partial F(x) \,d\mathrm{Leb}_{D}^{A_n^M}(x).
\end{align}

Observe that $ \int \sharp F(x)\, d\mathrm{Leb}_{D}^{A_n^M}(x)$ is just $n\mu_n^{M,N,P}(\mathbf M)$.  Moreover $F(x)\subset [1,n]$ and $\partial F(x)\subset 
\left([1,n]\cap \partial E^{\delta}_{\Phi_i^x}(P)\right)\cup \left([1,n]\cap \partial F^{\delta,M}_{-\Phi_{i+1}^x}(M)\right)\cup \left([1,n]\cap \partial G^{\delta,M}_{-\Phi_{i+1}^x}(N)\right)$, thus we have 
\begin{align*}
\sharp \partial F(x)&\leq  \lceil n/P\rceil+\lceil n/N \rceil + \lceil n/M \rceil= :a_n^{M,N,P}.
\end{align*}  
As $\sharp \partial F(x)$ completely determines $F(x)$, the number of possible values of $F$ is less than $ b_n^{M,N,P}:= \sum_{k=1}^{a_n^{M,N,P}}\binom{n}{k}$,  
 then $$H_{\mathrm{Leb}_{D}^{A_n^M}}(F)\leq \log  b_n^{M,N,P}.$$

We write $o_{M,N,P}(1)$ for any function $f$ of $M,N,P$ satisfying $$\limsup_P\limsup_N\limsup_{M}|f(M,N,P)|=0.$$  By a standard application of Stirling's formula,   we have  $\limsup_n\frac{1}{n}  \log b_n^{M,N,P}=o_{M,N,P}(1)$. 
Therefore  we obtain  by taking the limit when $n\in \mathfrak n_M$ goes to infinity in (\ref{ddf}): 
\begin{align*}\mu^{M,N,P}(\mathbf M)\frac{H_{\nu^{M,N,P}}(Q^m)}{m}& \geq & \\
& \hspace{-1,2cm} \liminf_{n\in \mathfrak n_M} -\frac{1}{n}\int \log \mathrm{Leb}_{D}^{A_n^M}\left(Q^{G_{-\Phi_{i+1}^x}^{\delta,M}\left(\left(N\right)\right)\cap E^{\delta}_{\Phi_i^x}(P)\cap [1,n]}(x)\right)\, \mathrm{Leb}_{D}^{A_n^M}(x)&\\
& \hspace{2cm}+o_{M,N,P}(1). &
\end{align*}
Let $E_{n}^{M,N,P}$ be the union of $]k,l]$ with $k,l\in E^\delta_{\Phi_i^x}$ and $]k,l]\subset G_{-\Phi_{i+1}^x}^{\delta,M}\left(\left(N\right)\right)\cap E^{\delta}_{\Phi_i^x}(P)\cap [1,n]$. Then 
one easily checks that  \begin{equation}\label{eq:almost}\overline{d}\left(\left( G_{-\Phi_{i+1}^x}^{\delta,M}\left(\left(N\right)\right)\cap E^{\delta}_{\Phi_i^x}(P)\cap [1,n]\right)\setminus E_{n}^{M,N,P}\right)\leq P/N +P/M.
\end{equation}

By Lemma \ref{dist} we get : 
\begin{align*} \liminf_{n\in \mathfrak n_M} -\frac{1}{n}\int \log \mathrm{Leb}_{D}^{A_n^M}\left(Q^{E_{n}^{M,N,P}}(x)\right) \,  d\mathrm{Leb}_{D}^{A_n^M}(x)\geq & & \\ \liminf_{n\in \mathfrak n_M} -\frac{1}{n}\int \log \mathrm{Leb}_{D}\left(Q^{E_{n}^{M,N,P}}(x)\right) \,  d\mathrm{Leb}_{D}^{A_n^M}(x)-\limsup_{n\in \mathfrak n_M}\frac{1}{n}\log \mathrm{Leb}_{D}(A_n^M)\geq & &\\
  \liminf_{n\in \mathfrak n_M}\int \int \psi_i \, d \mu_x^n[ E_{n}^{M,N,P} ]\, d \mathrm{Leb}_{D}^{A_n^M} +o_{M,N,P}(1)-\epsilon,& &\end{align*}
therefore 
$$\mu^{M,N,P}(\mathbf M)\frac{H_{\nu^{M,N,P}}(Q^m)}{m}\geq  \liminf_{n\in \mathfrak n_M}\int \int \psi_i \, d \mu_x^n[ E_{n}^{M,N,P} ]\, d \mathrm{Leb}_{D}^{A_n^M}+o_{M,N,P}(1)-\epsilon.$$
But it follows from (\ref{eq:almost}) that 
$$\liminf_{n\in \mathfrak n_M}\int \int \psi_i \, d \mu_x^n[ E_{n}^{M,N,P} ]\, d \mathrm{Leb}_{D}^{A_n^M}=\int \psi_i\, d\mu+o_{M,N,P}(1).$$

 Recall that the static entropy $\mathcal M(X)\ni \iota\mapsto H_{\iota}(R)$ is continuous at $\mu$ for any partition $R$ with boundary of   zero $\mu$-measure.  As the boundary of $Q$ has zero measure for  $\nu$, $\nu^{P},\nu^{ N,P}$ we get by taking the successive limits in $M$,  $N$ and $P$ : 
$$\mu(\mathbf M)\frac{H_{\nu}(Q^m)}{m}\geq \int \psi_i\,d\mu-\epsilon,$$
thus 
$$\frac{H_{\nu}(Q^m)}{m}\geq \int \psi_i\,d\nu-\epsilon/\lambda. $$
By letting $m$ go to infinity we obtain $h(\nu)\geq  h(\nu,Q)\geq \int \psi_i\,d\nu-\epsilon/\lambda$.  As it holds for any $\epsilon>0$, we have finally  $h(\nu)\geq   \int \psi_i\,d\nu$.  Since $\phi^i_{*}(x)>0>\phi_*^{i+1}(x)$ for $\nu$-a.e.  $x$,  the term $\int \psi_i\,d\nu$  is the integral of the sum of the positive exponents of $\nu$.  Together with Ruelle's inequality,  we conclude  that $\nu$ satisfies the  Pesin entropy formula,  thus  $\nu$ is a hyperbolic SRB measure. 

Then,   by standard arguments (see e.g.  \cite{bd}) it follows from the absolute continuity of Pesin stable lamination and  the dynamical density on hyperbolic times  with respect to $\mathcal D_i$ and $\Leb_D$   in $A_n$ that some point in $\mathcal D_i$ should lie in the basin of an ergodic component of $\nu$.  Therefore we get a contradiction : Lebesgue a.e.  $x\in \mathcal C_i\subset \bigcup_{\lambda, \delta\in \mathbb Q^+}\mathcal B_i(\lambda, \delta)$ lie in the basin of an ergodic hyperbolic SRB measure.   The proof of Proposition \ref{pro:mar} is complete.

\subsection{The non-hyperbolic case }
Let $\mathcal A'_i=\{ \alpha_i=1 >\alpha_{i+1} \text{ and }\beta_{i+1}=0\}$.   Then we have 
$\mathcal A_i\cup \mathcal A'_i=\mathcal L_i=\{\alpha_i=1 >\alpha_{i+1}\}$.     
\begin{prop}\label{secon}
 For  Lebesgue almost every $x\in \mathcal A'_i$ there is $\mu\in pw(x)$ with an SRB non-hyperbolic component. 
\end{prop}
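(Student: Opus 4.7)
The plan is to follow the template of the proof of Proposition \ref{pro:mar}, substituting the midly-hyperbolic times of $-\phi^{i+1}$ used there (which forced $\phi^{i+1}_*<0$ on the limit) by the \emph{non}-hyperbolic times of $\phi^{i+1}$, which combined with $\beta_{i+1}=0$ will force $\phi^{i+1}_*=0$. The hypothesis translates via Lemmas \ref{compa} and \ref{equiv} (the latter applied to $-\phi^{i+1}$): on $\mathcal A'_i$, every $\mu\in pw(x)$ satisfies $\phi^i_*>0$ and $\phi^{i+1}_*\ge 0$ $\mu$-a.e., while $\alpha_{i+1}(x)<1$ ensures some $\mu\in pw(x)$ has $\mu(\phi^{i+1}_*=0)>0$. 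Using $\mathcal A'_i=\bigcup_{\eta\in\mathbb Q_+}\mathcal A'_i(\eta)$ with $\mathcal A'_i(\eta):=\mathcal A'_i\cap\{\alpha_{i+1}\le 1-\eta\}$, I fix a rational $\eta>0$ and apply Egorov to pass to $\mathcal C'_i\subset\mathcal A'_i(\eta)$ of nearly full Lebesgue measure on which $\alpha_i=1$ and $\beta_{i+1}=0$ hold uniformly.

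I then argue by contradiction, assuming some $\mathcal D'_i\subset\mathcal C'_i$ of positive Lebesgue measure contains no $x$ admitting an SRB non-hyperbolic empirical component. By Fubini I find a smooth $i$-disc $D$ tangent to $\mathfrak C_i$ with $\mathrm{Leb}_D(\mathcal D'_i)>0$, and restrict (using Proposition \ref{Thm:density-point}) to $\mathcal E'_i\subset D\cap\mathcal D'_i$ of dynamical density points on $\delta$-hyperbolic times. For small rationals $\delta,\delta'>0$ and large $P,P'$, a Borel--Cantelli argument analogous to the one in Proposition \ref{pro:mar} yields an infinite subsequence $\mathfrak n$ and Borel sets $A_n\subset\mathcal E'_i$ ($n\in\mathfrak n$) with $\mathrm{Leb}_D(A_n)\ge n^{-2}$ on which, for every $y\in A_n$,
\begin{align*}
d_n\!\bigl(E^{\delta}_{\Phi^i_y}(P)\bigr)>1-\epsilon\quad\text{and}\quad d_n\!\bigl(\overline{E^{\delta'}_{\Phi^{i+1}_y}(P')}\bigr)>\eta/2.
\end{align*}
Setting $F_n(y):=E^{\delta}_{\Phi^i_y}(P)\cap\overline{E^{\delta'}_{\Phi^{i+1}_y}(P')}\cap[1,n]$ and $\mu_n:=\int\mu_y^n[F_n(y)]\,d\mathrm{Leb}_D^{A_n}(y)$, I extract iterated subsequential limits $\mu$ in $n$, then $P'$, $\delta'$, $P$, $\delta$; normalising, $\nu$ is $f$-invariant and non-trivial.

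To verify the SRB non-hyperbolic properties of $\nu$: since $\mu$ is a component of the full integrated empirical limit, Lemma \ref{two} applied to $-\phi^{i+1}$ (using uniform $\beta_{i+1}=0$ on $\mathcal C'_i$) yields $\phi^{i+1}_*\ge 0$ $\nu$-a.e. As $\mu$ is also a component of $\lim_n\int\mu_y^n[\overline{E^{\delta'}_{\Phi^{i+1}_y}(P')}]\,d\mathrm{Leb}_D^{A_n}$, Lemma \ref{one} applied to $\phi^{i+1}$ controls $\int\phi^{i+1}\,d\zeta\le\delta'\zeta(\mathbf M)$ for the corresponding limit $\zeta\supseteq\mu$; since $\phi^{i+1}_*\ge 0$ on the invariant component $\zeta-\mu$ as well, $\int\phi^{i+1}\,d\mu\le\delta'$, and taking $\delta'\to 0$ in the iterated limit together with the invariance of $\nu$ forces $\phi^{i+1}_*=0$ $\nu$-a.e. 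By the domination hypothesis this implies $\phi^j_*>0$ for $j\le i$ and $\phi^j_*<0$ for $j\ge i+2$, so the sum of positive exponents of $\nu$ equals $\int\psi_i\,d\nu$. The entropy bound $h(\nu)\ge\int\psi_i\,d\nu$ then follows \emph{verbatim} from the argument in Proposition \ref{pro:mar} using Lemma \ref{lem:Mis} with $F=F_n$ and the Gibbs distortion Lemma \ref{dist} at the $\delta$-hyperbolic times of $\phi^i$; note that $\partial F_n(y)\subset\partial E^{\delta}_{\Phi^i_y}(P)\cup\partial\overline{E^{\delta'}_{\Phi^{i+1}_y}(P')}$ has size $O(n/P+n/P')$, producing $o(1)$ error terms. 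Ruelle's inequality then forces Pesin's formula and $\nu$ is SRB non-hyperbolic.

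The main obstacle is the final step: converting the integrated-limit SRB measure $\nu$ into an empirical component of some $\mu\in pw(x)$ at a concrete point $x\in\mathcal D'_i$. In Proposition \ref{pro:mar} this was handled by absolute continuity of the full Pesin stable lamination, which fails here because of the neutral direction $E_{i+1}$. The plan is to exploit the absolute continuity of the \emph{strong-unstable} lamination tangent to $E^u\oplus G_i$ (still uniformly hyperbolic) together with the dynamical density on $\phi^i$-hyperbolic times of $\mathcal D'_i$ with respect to $\mathrm{Leb}_D$ on $A_n$, to exhibit a $\mathrm{Leb}_D$-positive subset of $y\in A_n\subset\mathcal D'_i$ at which some ergodic component of $\nu$ is realised as an SRB non-hyperbolic component of some $\mu\in pw(y)$, contradicting the assumption on $\mathcal D'_i$.
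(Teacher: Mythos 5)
Your construction of the candidate measure follows a reasonable template and the verification that the resulting probability is a non-hyperbolic SRB measure would go through (the use of Lemma \ref{one} for $\phi^{i+1}$ to bound $\int\phi^{i+1}$ above, Lemma \ref{two} to get $\phi^{i+1}_*\geq 0$ a.e., and Lemma \ref{lem:Mis} plus Lemma \ref{dist} for the entropy bound are all the right ingredients). But there is a genuine gap exactly where you flag "the main obstacle", and your proposed fix does not close it. In the hyperbolic case (Proposition \ref{pro:mar}) the integrated-limit measure $\nu$ is a \emph{hyperbolic} SRB measure, so its ergodic components have basins of positive Lebesgue measure; absolute continuity of the stable lamination and the dynamical density property then produce a point of $\mathcal D_i$ lying in one of those basins, i.e.\ with $pw(x)=\{\nu\}$. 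That last implication relies crucially on hyperbolicity. For a non-hyperbolic SRB measure $\hat\nu$ there is no basin to intersect, and absolute continuity of the unstable lamination tangent to $G_i$ (which is what you seem to mean by ``$E^u\oplus G_i$'', since $G_i$ already contains $E^u$) gives no control whatsoever on the long-time empirical distribution from a Lebesgue-typical nearby point. So the final step of your contradiction argument, promoting the integrated limit $\nu$ to a component of some $\mu\in pw(y)$ for a concrete $y\in\mathcal D'_i$, is not justified and cannot be repaired by the lamination argument you sketch.

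The paper sidesteps this entirely by inverting the order of the construction: rather than first building the integrated limit and then trying to locate a point realising it, it fixes the point $x\in\mathcal D'_i$ \emph{first} and ensures that every averaging window is compatible with a pre-selected $\nu^\delta\in pw(x)$. The tool making this possible is the elementary but key measure-theoretic fact about the essential range of a Borel map (Lemma \ref{lem:tech}): applied to the map sending $y$ to the set of accumulation points of $\bigl(\mu_y^n,\,d_n(\overline{E^\delta_{\Phi^{i+1}_y}(M)})\bigr)_n$, it yields, for $\mathrm{Leb}_D$-a.e.\ $x\in\mathcal D'_i$, sets $A_n^\delta$ of $\mathrm{Leb}_D$-measure $\geq n^{-2}$ on which $\mathfrak d(\mu_y^n,\nu^\delta)\to 0$ \emph{uniformly} while the bad density of $\phi^{i+1}$-hyperbolic times stays bounded below. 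Then $\int\mu_y^n\,d\mathrm{Leb}_D^{A_n^\delta}(y)\to\nu^\delta\in pw(x)$ automatically, the constructed SRB non-hyperbolic measure $\hat\nu$ is by design a component of $\nu=\lim_\delta\nu^\delta\in pw(x)$ (using compactness of $pw(x)$), and no a posteriori localisation argument is needed. This is the idea missing from your proposal.
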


This subsection is devoted to the proof of the above proposition. 
In the statement one may replace  $\mathcal A'_i$  by  a subset $\mathcal B'_i$ of $\mathcal A'_i$, such that $\alpha_i=1$  and   $\beta_{i+1}=0$  uniformly on $\mathcal C'_i$ and that  $\alpha_{i+1}(x)<1-\lambda $ for all $x\in \mathcal C'_i$ for some $\lambda>0$.  It is enough to show that for any subset $\mathcal D'_i$ of $\mathcal C'_i$ with positive Lebesgue measure  there is some $x\in \mathcal D_i$ and $\mu\in pw(x)$ with an SRB non-hyperbolic component.   Again,  we may choose  a smooth embedded  disc $D$ tangent to $\mathfrak C_i$ with $\Leb_{D}(\mathcal D'_i)>0$.  

\begin{lem}\label{lem:tech}For $\mathrm{Leb}_D$ a.e.  $x\in \mathcal D'_i$ and  for   any $\mathbb Q\ni\delta>0$,   there exist $\nu\in pw(x)$,  an infinite sequence $\mathfrak n$ and Borel subsets $(A_n)_{n\in \mathfrak n}$ of $\mathcal D'_i$ (depending on $x$ and $\delta$) with $\Leb_D(A_n)\geq \frac{1}{n^2}$ for all $n\in \mathfrak n$  such that we have :
\begin{itemize}
\item   $\sup_{y\in A_n}  \mathfrak d(\mu_n^y,\nu )\xrightarrow{\mathfrak n\ni n\to +\infty}0$, 
\item $\forall M \, \exists n_M \, \forall \mathfrak n\ni n >n_M \,  \forall  y\in A_n, \  d_n\left(\overline{E^\delta_{\Phi^{i+1}_y}(M)}\right)>\lambda.$
\end{itemize}
\end{lem}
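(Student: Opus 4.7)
The plan is to combine a pigeonhole decomposition of $\mathcal D'_i$ by the map $y\mapsto\mu_n^y$ with a Borel--Cantelli argument, then to extract a diagonal sequence depending on $x$. Fix $\delta\in\mathbb Q^+$ and set
$$T_n^M:=\bigl\{y\in\mathcal D'_i:d_n\bigl(\overline{E^\delta_{\Phi^{i+1}_y}(M)}\bigr)>\lambda\bigr\}.$$
Since $\mathcal C'_i\supset\mathcal D'_i$ was chosen so that $\alpha_{i+1}(y)<1-\lambda$ uniformly, and the limits in $M$ and $1/\delta$ defining $\alpha_{i+1}$ are nondecreasing, we have $\underline d(E^\delta_{\Phi^{i+1}_y}(M))<1-\lambda$ for every $y\in\mathcal D'_i$ and every $M$, so $\{n:y\in T_n^M\}$ is infinite and in particular $\Leb_D(\limsup_n T_n^M)=\Leb_D(\mathcal D'_i)$.

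For each $(\epsilon,M)\in\mathbb Q^+\times\mathbb N^*$ fix a Borel partition $\mathcal M(\mathbf M)=\bigsqcup_{j=1}^{N(\epsilon)}B_j$ into $N(\epsilon)<\infty$ pieces of $\mathfrak d$-diameter $<\epsilon$ (compactness of $\mathcal M(\mathbf M)$) and form the induced measurable partition $T_n^M=\bigsqcup_j F_n^{M,j}(\epsilon)$ with $F_n^{M,j}(\epsilon):=\{y\in T_n^M:\mu_n^y\in B_j\}$. Writing $G_n^{M,\epsilon}$ for the union of those pieces whose $\Leb_D$-mass is below $n^{-2}$, one has $\Leb_D(G_n^{M,\epsilon})<N(\epsilon)\,n^{-2}$, which is summable in $n$, so Borel--Cantelli gives $\Leb_D(\limsup_n G_n^{M,\epsilon})=0$. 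Taking the countable union of these null sets together with $\bigcup_M(\mathcal D'_i\setminus\limsup_n T_n^M)$, for $\Leb_D$-a.e.\ $x\in\mathcal D'_i$ both properties hold: (i) $\{n:x\in T_n^M\}$ is infinite for every $M$, and (ii) for every $(M,\epsilon)$ there is $n_0(x,M,\epsilon)$ such that $x\in T_n^M$ with $n>n_0$ forces $\Leb_D(F_n^{M,j_n^x}(\epsilon))\geq n^{-2}$, where $j_n^x$ is the index of the piece containing $x$.

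For such an $x$, diagonally pick $\mathfrak n=(n_k)_k$ with $n_k>\max(n_{k-1},n_0(x,k,1/k))$ and $x\in T_{n_k}^k$ (possible by (i) and (ii)), and set $A_{n_k}:=F_{n_k}^{k,j_{n_k}^x}(1/k)$. Then $\Leb_D(A_{n_k})\geq n_k^{-2}$, and by monotonicity of $T_n^M$ in $M$, $A_{n_k}\subset T_{n_k}^k\subset T_{n_k}^M$ for every $k\geq M$, which yields the second bullet with $n_M$ the $M$-th term of $\mathfrak n$. By compactness of $\mathcal M(\mathbf M)$, pass to a further subsequence of $\mathfrak n$ along which $\mu_{n_k}^x$ converges to some $\nu\in pw(x)$. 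Every $y\in A_{n_k}$ lies in the same $(1/k)$-piece as $x$, so $\mathfrak d(\mu_{n_k}^y,\mu_{n_k}^x)<1/k$, and the triangle inequality gives $\sup_{y\in A_{n_k}}\mathfrak d(\mu_{n_k}^y,\nu)\to 0$, which is the first bullet.

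The only delicate point is the simultaneous bookkeeping: balancing $M_k\to\infty$ (for the second bullet, with every $M$ eventually captured), $\epsilon_k\to 0$ (for uniform weak-$*$ convergence), and the Borel--Cantelli threshold $n_k^{-2}$ (for the volume estimate on $A_{n_k}$). Choosing $(M_k,\epsilon_k)=(k,1/k)$ in the diagonal and intersecting the countably many full-measure subsets of $\mathcal D'_i$ into a single one resolves this, after which $\nu\in pw(x)$ comes for free from compactness of $\mathcal M(\mathbf M)$.
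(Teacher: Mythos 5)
Your proof is correct, and it takes a genuinely different (and arguably more elementary) route than the paper. The paper reduces the core pigeonhole to an abstract measure-theoretic \emph{Fact} about the essential range of the Borel maps $\psi_M\colon y\mapsto\{\text{accumulation points of }(\mu_y^n,d_n(\overline{E^\delta_{\Phi^{i+1}_y}(M)}))_n\}$ with values in the compact sets $\mathcal K(\mathcal M(\mathbf M,f)\times[0,1])$, applies it once for each $M$ to produce measures $\nu_M\in pw(x)$ and positive-mass target sets, then invokes Borel--Cantelli and sets $\nu=\lim_M\nu_M$. You instead unwind that abstraction: you fix explicit finite Borel partitions of the compact space $\mathcal M(\mathbf M)$ into pieces of diameter $<\epsilon$, push them back to partition $T_n^M$ via $y\mapsto\mu_n^y$, discard (by Borel--Cantelli on the union of small cells $G_n^{M,\epsilon}$) a null set of $x$ whose cell is ever too thin for $n$ large, and diagonalize in $(M_k,\epsilon_k)=(k,1/k)$ directly at the level of the cell containing $x$; the measure $\nu$ then comes from compactness of $\mathcal M(\mathbf M)$ applied to $(\mu_{n_k}^x)_k$ rather than to an auxiliary sequence $(\nu_M)_M$. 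Both arguments exploit the same two structural inputs: (a) $\underline d(E^\delta_{\Phi^{i+1}_y}(M))\le\alpha_{i+1}(y)<1-\lambda$ on $\mathcal D'_i$, giving $x\in T_n^M$ for infinitely many $n$; and (b) the monotonicity $\overline{E^\delta_{\Phi^{i+1}_y}(M')}\subset\overline{E^\delta_{\Phi^{i+1}_y}(M)}$ for $M'\ge M$, which you use explicitly (via $T_{n_k}^k\subset T_{n_k}^M$) to upgrade the diagonal into the ``$\forall M$'' statement of the second bullet and which the paper uses implicitly for the same purpose. Your version buys concreteness and avoids the Hausdorff-metric machinery on $\mathcal K(\cdot)$; the paper's buys brevity by outsourcing the pigeonhole to a reusable lemma. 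One small remark: when you pass to a final sub-subsequence to make $\mu_{n_k}^x$ converge, you should note (as you implicitly do) that both bullets are stable under passage to subsequences of $\mathfrak n$, so the extraction is harmless.
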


We first recall a standard fact of measure theory. 
\begin{fact}Let $X$ and $Y$ be separable metric spaces. When  $\psi:X\rightarrow Y$ is a Borel map and $\mu$ is a Borel finite measure on $X$, there is a subset $X'$ of $X$ of  full $\mu$ measure such that 
for any $x\in X'$, for any $\delta>0$, the set $\psi^{-1}B(\psi(x), \delta)$ has positive $\mu$-measure.
\end{fact}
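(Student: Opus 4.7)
The plan is to reduce everything to the classical fact that a finite Borel measure on a separable metric space is concentrated on its topological support. Define the pushforward $\nu := \psi_*\mu$ on $Y$; this is a finite Borel measure because $\psi$ is Borel. Let $S := \mathrm{supp}(\nu) \subset Y$ be the (closed) set of points $y$ such that every open neighborhood of $y$ has strictly positive $\nu$-measure. The candidate full-measure set is simply $X' := \psi^{-1}(S)$, which is Borel because $S$ is closed.

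The key step is to show $\nu(Y \setminus S) = 0$, and this is where separability of $Y$ enters. Fix a countable base $(U_n)_{n\in\mathbb N}$ for the topology of $Y$, and let $\mathcal N$ be the (countable) collection of those $U_n$ with $\nu(U_n) = 0$. Any $y \notin S$ admits, by definition, some open neighborhood $V$ with $\nu(V) = 0$; picking $U_n$ from the base with $y \in U_n \subset V$ shows $U_n \in \mathcal N$. Hence $Y \setminus S \subset \bigcup_{U \in \mathcal N} U$, and countable subadditivity gives $\nu(Y \setminus S) = 0$. By definition of the pushforward, $\mu(X \setminus X') = \mu(\psi^{-1}(Y \setminus S)) = \nu(Y \setminus S) = 0$, so $X'$ has full $\mu$-measure. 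Then for every $x \in X'$ we have $\psi(x) \in S$, whence $\nu(B(\psi(x), \delta)) > 0$ for all $\delta > 0$, i.e.\ $\mu(\psi^{-1} B(\psi(x), \delta)) > 0$, as required.

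There is essentially no obstacle here: once one names the pushforward measure and invokes its support, the statement becomes a two-line exercise. The only conceptual care is that separability of $Y$ is genuinely used (to write the non-support as a countable union of null open sets); without it the classical support-carries-the-measure statement can fail. Borel-measurability of $X' = \psi^{-1}(S)$ is free since $S$ is closed.
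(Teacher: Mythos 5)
Your proof is correct and is essentially the same as the paper's: the paper identifies $X'$ as the preimage under $\psi$ of the essential range of $\psi$ (deferring details to the appendix of \cite{Bcmp}), and the essential range is precisely the topological support of the pushforward measure $\psi_*\mu$, which is what you construct.
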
 
The set $X'$ is  the preimage by $\psi$ of the essential range of $\psi$.  We refer to the appendix of  \cite{Bcmp} for a proof. 
\begin{proof}[Proof of Lemma \ref{lem:tech}]
 Fix $\delta>0$.  We apply the above fact to 
 \begin{itemize}
 \item $X=\mathbf M$, 
 \item $Y=\mathcal K\left(\mathcal{M}(\mathbf M,f)\times [0,1] \right)$  the set of compact subsets of $\mathcal{M}(\mathbf M,f)\times [0,1]$ endowed with the Hausdorff topology, 
 \item $\mu=\Leb_D(\mathcal D'_i\cap \cdot )$,
 \item   the map $\psi_M: \mathcal D'_i\rightarrow \mathcal K(\mathcal{M}(\mathbf M,f)\times [0,1] )$ which sends $x$ to  the set of accumulation points of the sequence $\left(\mu_x^n, d_n\left(\overline{E^\delta_{\Phi^{i+1}_x}(M)}\right)\right)_{n\in \mathbb N}$. 
 \end{itemize}
 Observe that for $\mu$-a.e. points $x$, there is $(\nu, \beta) \in \psi_M(x)\cap \left(pw(x)\times ]\lambda, +\infty[\right)$, because  for $x\in \mathcal D'_i$ we have
\begin{align*}
\lim_M^\searrow\overline{d}\left( \overline{E^\delta_{\Phi^{i+1}_x}(M)}\right)&\geq 1-\lim^\nearrow_M\underline{d}\left( E^\delta_{\Phi^{i+1}_x}(M)\right),\\
&\geq 
1-\alpha_{i+1}(x),\\
&>\lambda.
\end{align*}

For each $M$ we get therefore a subset  $\mathcal D''_{i,M}$ of $\mathcal D'_i$ with  $\Leb_D(\mathcal D'_i)=\Leb_D(\mathcal D''_{i,M})$ such for any $
x\in \mathcal D''_{i,M}$, there is $\nu_M \in pw(x)$ such that the set 
$$\left\{y\in \mathcal D'_i,\ \psi_M(y)\cap \left(B(\nu_M, 1/M)\times ]\lambda,+\infty[\right) \neq \emptyset  \right\}$$
has positive $\Leb_D$-measure.  By Borel-Cantelli lemma, for each $M$ there are infinitely many integers $n_M$ and Borel subsets $A_{n_M}$ of $D$ with  $\Leb_D(A_{n_M})\geq \frac{1}{n_M^2}$ such that  for all $y\in A_{n_M}$:
\begin{itemize}
\item   $\mathfrak d(\mu_{n_M}^y,\nu_M)\leq 1/M$, 
\item $ d_{n_M}\left(\overline{E^\delta_{\Phi^{i+1}_y}(M)}\right)>\lambda.$
\end{itemize}
  Let $x$ in the set $\bigcap_M \mathcal D''_{i,M}$ (which has full $\mathrm{Leb}_D$-measure in $\mathcal D'_i$) and let $\nu$ be a  limit of the above sequence $(\nu_M)_M$.  Finally we may choose $n_M$ as above such that $(n_M)_M$ is increasing.  This concludes the proof of the lemma with $\mathfrak n=(n_M)_{M}$. 
\end{proof}

From now we fix $x\in \mathcal D'_i$ satisfying the conclusions of Lemma \ref{lem:tech}.  Let   $\nu^\delta\in pw(x)$ and $(A_n^\delta)_{n\in \mathfrak n^\delta}$, $\delta\in \mathbb Q$,  be the  associated  measures and subsets given by this lemma.  We will show that any limit $\nu$ of $\nu^\delta$ when $\delta$ goes to zero has a non-hyperbolic  SRB  component $\hat \nu$. 
 We let for any  $n\in \mathfrak n^\delta$ and for any $M,P\in \mathbb N$
 $$\mu_n^{\delta,M,P}:=\int \mu_y^{n}\left[\overline{E^\delta_{\Phi^{i+1}_y}(M)}\cap E^\delta_{\Phi^{i}_y}(P) \right]\, d\mathrm{\Leb}_D^{A_n^\delta}(y),$$
 
 $$\zeta_n^{\delta,M}:=\int \mu_y^{n}\left[\overline{E^\delta_{\Phi^{i+1}_y}(M)}\right]\, d\mathrm{Leb}_D^{A_n^\delta}(y),$$
 
 $$\eta_n^\delta=\int \mu_y^{n}\, d\mathrm{Leb}_D^{A_n^\delta}(y).$$

By extracting subsequences,  we may assume $\mu_n^{\delta,M,P}$  (resp.  $\zeta_n^{\delta,M}$) is   converging to some $\mu^{\delta, M,P}$ (resp. $\zeta^{\delta,M}$)  when $n\in \mathfrak n^\delta$ goes to infinity.   Then  we let 

 $$\mu^{\delta,P}=\lim_M^{\searrow}\mu^{\delta,M,P},  \ \mu^{\delta}=\lim_P^\nearrow \mu^{\delta,P},  \ \hat\nu^\delta=\frac{\mu_\delta(\cdot)}{\mu_\delta(\mathbf M)},  \ \zeta^\delta=\lim_M^{\searrow}\zeta^{\delta,M}.$$  

These measures satisfy the following properties:
\begin{itemize}
\item $\zeta^{\delta}(\mathbf M)=\lim_M\lim_n\zeta_n^{\delta,M}(\mathbf M )=\lim_M \lim_n \int d_n\left(\overline{E^\delta_{\Phi^{i+1}_y}(M)}\right)\, d\mathrm{Leb}_{A_n^\delta}(y)\geq\lambda$,
\item  $\int \phi^{i+1}\, d\zeta^\delta\leq  \delta \zeta^{\delta}(\mathbf M)$ by Lemma \ref{one},
\item $\hat \nu^\delta$ is $f$-invariant and $h(\hat \nu^\delta)\geq \int \psi_i\, d\hat  \nu^\delta$ by arguing as in Subsection 3.1,
\item $\mathfrak d (\mu^\delta, \zeta^\delta)\xrightarrow{\delta}0$ because $\alpha_i=1$ uniformly on $\mathcal D'_i$.  Indeed we have by convexity of the distance $\mathfrak d$:
\begin{align*}
\mathfrak d (\mu^\delta, \zeta^\delta)&\leq \lim_P\lim_M\lim_n \mathfrak d (\mu_n^{\delta,M,P}, \zeta_n^{\delta,M}),\\
&\leq \limsup_P\limsup_n \int d_n\left(\overline{E^\delta_{\Phi^{i}_y}(P)}\right)\, d\mathrm{Leb}_D^{A_n^\delta}(y),\\
&\leq \limsup_P\limsup_n \sup_{x\in \mathcal D'_i}d_n\left(\overline{E^\delta_{\Phi^{i}_x}(P)}\right)\xrightarrow{\delta\rightarrow 0} 0.
\end{align*}
 
\item$\mathfrak d(\eta_n^\delta, \nu^\delta)\leq \int \mathfrak d ( \mu_y^{n},\nu^\delta) \, d\mathrm{Leb}_D^{A_n^\delta}(y)\xrightarrow{n}0$ according to the first item of Lemma \ref{lem:tech},
\item  $\mu^\delta\leq \zeta^\delta\leq \nu^\delta$.
\end{itemize}

\begin{lem}\label{lem:nonhyp}
 For any limit     $(\nu,\hat \nu)$ of $(\nu^\delta, \hat \nu^\delta)_\delta$  when $\delta$ goes to zero,  we have 

\begin{enumerate}
\item $\lambda \hat \nu\leq \nu$,
\item $\int \phi_{i+1}\, d\hat \nu\leq 0$,
\item $h(\hat \nu)\geq \int  \psi_i \, d\hat\nu$, 
\item $\hat \nu(\phi^{i+1}_*\geq 0)=1$.

\end{enumerate}
\end{lem}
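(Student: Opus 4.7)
The plan is to pass to the limit $\delta \to 0$ in the bullet properties stated just above. Properties (1) and (2) are essentially formal. Since $\mu^\delta \leq \zeta^\delta \leq \nu^\delta$ as Borel measures (last bullet), taking weak-$*$ limits along the chosen $\delta$-subsequence yields $\mu \leq \zeta \leq \nu$; while $\mathfrak{d}(\mu^\delta,\zeta^\delta)\to 0$ forces $\mu = \zeta$, and $\zeta^\delta(\mathbf M) \geq \lambda$ gives $\mu(\mathbf M)\geq \lambda$. From $\mu = \mu(\mathbf M)\hat\nu$, we conclude $\lambda\hat\nu \leq \mu \leq \nu$, which is (1). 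For (2), the second bullet gives $\int\phi^{i+1}\,d\zeta^\delta \leq \delta\,\zeta^\delta(\mathbf M)\to 0$, hence $\int\phi^{i+1}\,d\mu \leq 0$; dividing by $\mu(\mathbf M)>0$ and using invariance of $\hat\nu$ gives $\int\phi^{i+1}\,d\hat\nu = \phi_{i+1}(\hat\nu)\leq 0$.

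For (3), I would reproduce the Misiurewicz-style argument of Subsection 3.1 applied to the present setting. Apply Lemma \ref{lem:Mis} with $\mu = \mathrm{Leb}_D^{A_n^\delta}$ and set-valued map $F(y) = \overline{E^\delta_{\Phi^{i+1}_y}(M)} \cap E^\delta_{\Phi^i_y}(P) \cap [1,n]$, so that $\sharp\partial F(y)\leq n/M+n/P$; to invoke Lemma \ref{dist}, replace $F(y)$ by its sub-union $E_n^{\delta,M,P}$ of maximal intervals with endpoints in $E^\delta_{\Phi^i_y}$ (the difference having upper density at most $P/M$, as in (\ref{eq:almost})). Fix $\epsilon>0$ and a partition $Q$ of diameter less than the parameter $\alpha$ of Lemma \ref{dist}, chosen so that $\xi(\partial Q)=0$ for all the relevant intermediate limit measures. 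Successive limits in $n$, $M$, $P$ yield, for each rational $\delta$,
\[
\frac{H_{\hat\nu^\delta}(Q^m)}{m}\geq \int \psi_i\,d\hat\nu^\delta - \epsilon.
\]
Rather than taking $h(\hat\nu)\geq \limsup_\delta h(\hat\nu^\delta)$, which would require semi-continuity of entropy, I fold $\delta\to 0$ into the estimate at fixed $m$: continuity of $\iota\mapsto H_\iota(Q^m)$ and of $\iota\mapsto \int\psi_i\,d\iota$ propagates the inequality to $\hat\nu$, and letting $m\to\infty$ then $\epsilon\to 0$ gives $h(\hat\nu)\geq \int\psi_i\,d\hat\nu$.

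For (4), the fifth bullet combined with $\nu^\delta\to\nu$ allows a diagonal extraction $\delta_k\to 0$, $n_k\in\mathfrak n^{\delta_k}$, with $\eta_{n_k}^{\delta_k}\to\nu$. Since $A_n^\delta\subset \mathcal D'_i\subset \mathcal C'_i$ and $\beta_{i+1}=0$ uniformly on $\mathcal C'_i$, Lemma \ref{two} applied to $-\phi^{i+1}$ with $\xi_k=\mathrm{Leb}_D^{A_{n_k}^{\delta_k}}$ gives $\phi^{i+1}_*(y)\geq 0$ for $\nu$-a.e. $y$; property (1) (i.e., $\lambda\hat\nu\leq\nu$) then transfers this to $\hat\nu$, which is (4). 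The main obstacle is (3): because entropy is not in general semi-continuous in the measure, one must choose a single partition $Q$ whose boundary is null for all the countably many intermediate limit measures and carefully interleave the $\delta\to 0$ limit with the static-entropy estimate at fixed block-length $m$.
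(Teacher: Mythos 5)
Your treatment of items (1), (2), and (4) is essentially the paper's: (1) by taking weak-$*$ limits of the comparison $\mu^\delta\le\zeta^\delta\le\nu^\delta$ and using $\zeta^\delta(\mathbf M)\ge\lambda$ together with $\mathfrak d(\mu^\delta,\zeta^\delta)\to0$; (2) by passing to the limit in $\int\phi^{i+1}\,d\zeta^\delta\le\delta\zeta^\delta(\mathbf M)$ and dividing by $\mu(\mathbf M)\ge\lambda>0$; (4) by a diagonal extraction $\eta_{n_k}^{\delta_k}\to\nu$ plus Lemma~\ref{two} with $\xi_k=\mathrm{Leb}_D^{A_{n_k}^{\delta_k}}$, then transferring from $\nu$ to $\hat\nu$ via $\lambda\hat\nu\le\nu$.

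For item (3), however, your proposed route has a genuine gap, and it is exactly the step you flag as a potential obstacle. You want to keep a single partition $Q$ fixed and ``fold the $\delta\to0$ limit into the estimate at fixed $m$'' by continuity of $\iota\mapsto H_\iota(Q^m)$. But in Lemma~\ref{dist} the admissible diameter bound $\alpha=\alpha(\delta,\epsilon)$ depends on $\delta$: it ultimately comes from the bounded-geometry scale $\gamma(\delta)$ of Lemma~\ref{Lem:Pliss-iterate}, which shrinks as the hyperbolicity rate $\delta$ weakens. Hence for a fixed $Q$, once $\delta$ is small enough the hypothesis of Lemma~\ref{dist} fails and the display
$\frac{H_{\hat\nu^\delta}(Q^m)}{m}\ge\int\psi_i\,d\hat\nu^\delta-\epsilon$
is no longer available; the Misiurewicz estimate holds only for a $\delta$-dependent partition $Q^\delta$. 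Choosing $Q$ with $\xi(\partial Q)=0$ for all intermediate limits does not repair this, since the issue is the applicability of the Gibbs volume bound, not boundary nullity. The paper handles the passage $\delta\to0$ differently: it first proves $h(\hat\nu^\delta)\ge\int\psi_i\,d\hat\nu^\delta$ for each fixed $\delta$ (with its own $Q^\delta$), and then uses the non-trivial input that the partially hyperbolic system is asymptotically $h$-expansive (the main result of~\cite{fis}), hence the entropy map is upper semicontinuous, to conclude $h(\hat\nu)\ge\limsup_\delta h(\hat\nu^\delta)\ge\int\psi_i\,d\hat\nu$. Some appeal to this semicontinuity (or an equivalent uniform tail-entropy estimate) is unavoidable here; your attempt to bypass it does not go through.
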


\begin{proof}
Let $(\delta_k)_{k\in \mathbb N}$ be a sequence with $\lim_{k\to +\infty}\delta_k=0$ such that the measures  $\mu^{\delta_k}$, $
\nu^{\delta_k}$ and $
\hat\nu^{\delta_k}$ are converging respectively to $\mu$,  $\nu
$ and $\hat \nu
$  when $k$ goes to infinity. \\

\begin{enumerate}
\item As  $\mu^{\delta}$ is a component of $\nu^\delta$ and $\mathfrak d (\mu^\delta, \zeta^\delta)\xrightarrow{\delta\to 0}0$ we get at the limit 
\begin{align*}\lambda\hat \nu&\leq \lim_{k\to +\infty}\zeta^{\delta_k}(\mathbf M) \hat\nu^{\delta_k},\\
&\leq \lim_{k\to +\infty}\mu^{\delta_k}(\mathbf M) \hat \nu^{\delta_k},\\
&\leq \lim_{k\to +\infty}\mu^{\delta_k},\\
&\leq  \lim_{k\to +\infty}\nu^{\delta_k}=\nu.
\end{align*}\\

\item Observe that $\frac{\zeta_\delta(\cdot)}{\zeta_\delta(\mathbf M)} $ goes also to $\hat \nu$ with $\delta$,  since we have $\mathfrak d (\mu^\delta, \zeta^\delta)\xrightarrow{\delta\to 0}0$.  By taking the limit when $\delta$ goes to zero  in the inequality $\frac{1}{\zeta^{\delta}(\mathbf M)}\int \phi^{i+1}\, d\zeta^\delta\leq  \delta $, we get $\int \phi_{i+1}\, d\hat \nu\leq 0$.\\

\item The main result of \cite{fis} states that a partially hyperbolic system with a center bundle splitting in a dominated way into one dimensional subbundles is asymptotically $h$-expansive. In particular the measure theoretical entropy function is upper semicontinuous, therefore 
\begin{align*}
h(\hat \nu)&\geq \lim_{\delta\rightarrow 0} h(\hat \nu^\delta),\\
&\geq \lim_{\delta\rightarrow 0}\int \psi_i\, d\hat \nu^\delta=\int \psi_i\, d\hat \nu.
\end{align*}\\

\item  We may choose 
integers $n_k$ going to infinity with $k$ such that $\nu$ is the 
limit of $\left(\eta_{n_k}^{\delta_k}\right)_{k\in\mathbb N}$. As $\beta_{i+1}=0$ 
uniformly on $\mathcal D'_i$, we have $\nu(\phi^{i+1}_*\geq 0)=1$ by applying  Lemma \ref{two} with the sequence $(\xi_n)_n$ equal to $\left(\mathrm{Leb}_D^{A_{n_k}^{\delta_k}}\right)_{k\in \mathbb N}$. This concludes the proof of the last
item because we have shown $\lambda \hat \nu\leq \nu$.  \\
\end{enumerate}
\end{proof}

To conclude we only have to check $\hat \nu$ is a non-hyperbolic   SRB measure. From the two  items  (2) and (4) of Lemma \ref{lem:nonhyp} it follows that $\hat \nu\left(\phi_{i+1}^*=0\right)=1$. By the third item, the measure $\hat \nu$  satisfies Pesin   entropy formula. Therefore $\hat \nu$ is a non-hyperbolic   SRB component of $\nu\in pw(x)$. The proof of Proposition \ref{secon}, therefore of Theorem \ref{mmain}, is complete. \\

\begin{rem}If the center bundle is one dimensional,  then Lebesgue  a.e.  $x$ which does not lie in the basin of an ergodic hyperbolic SRB measure satisfies $\beta_1(x)=0$ by Proposition \ref{fond}.  Equivalently $\mu\left(\phi^1_*\geq 0\right)=1$  for any $\mu\in pw(x)$  by Lemma \ref{equiv}.  If $\mu\left(\phi^1_*> 0\right)=1$ for some $\mu\in pw(x)$,  then \begin{equation}\label{als}
0<\int \phi_1\, d\mu \leq \limsup_n\frac{1}{n}\sum_{l=0}^{n-1}\phi_1(f^lx).
\end{equation}
 But by  the main result of \cite{ADLP}  Lebesgue typical points satisfying (\ref{als}) lie in the basin of an ergodic hyperbolic SRB measure.  Therefore one recovers the main result of \cite{SDJ}, which states the limit $ \lim_n\frac{1}{n}\sum_{l=0}^{n-1}\phi_1(f^lx)$ defining the central exponent is well defined for Lebesgue a.e.  $x$ (equal to zero if and only if $x$ is not in the basin of an ergodic hyperbolic SRB measure).  
\end{rem}

If one assumes the partially hyperbolic to be only $\mathcal C^1$,  one gets the following version of Theorem \ref{mmain}:
\begin{theorem} Let $(f,\mathbf M)$ be a $\mathcal C^1$ diffeomorphism with a partially hyperbolic attractor admitting a center bundle splitting in a dominated way into one-dimensional subbundles, then for Lebesgue almost every $x$ in the topological basin of the attractor there is $\mu\in pw(x)$ with a  component satisfying the Pesin entropy formula.
\end{theorem}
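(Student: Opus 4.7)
The plan is to follow the proofs of Propositions \ref{pro:mar} and \ref{secon} essentially verbatim in the $\mathcal{C}^1$ setting. The decomposition of the topological basin into level sets $\mathcal{L}_i=\{\alpha_i=1>\alpha_{i+1}\}$ is purely topological and goes through without change. On each $\mathcal{L}_i$, the same Fubini--Egorov reduction produces a disc $D$ tangent to $\mathfrak{C}_i$ carrying positive Lebesgue measure of good points; the construction of empirical measures by pushing $\mathrm{Leb}_D$ along carefully chosen sets of $\delta$-hyperbolic times, together with the Pliss-type bounded geometry (Lemma \ref{Lem:Pliss-iterate}), the dynamical density point result (Proposition \ref{Thm:density-point}) and the Lyapunov exponent controls of Lemmas \ref{nouv}, \ref{nouvv} and \ref{two}, all rely only on continuity of $Df$ and the dominated splitting and therefore transfer to $\mathcal{C}^1$. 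Asymptotic $h$-expansivity, used to obtain upper semicontinuity of entropy in Lemma \ref{lem:nonhyp}, also holds in $\mathcal{C}^1$ for systems with one-dimensional dominated center splitting by \cite{fis}.

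The only step that genuinely uses $\mathcal{C}^{1+}$ regularity in the original argument is the volume estimate of Lemma \ref{dist}, whose proof relies on bounded distortion of $f^n$ along $D$ at hyperbolic times. In $\mathcal{C}^1$ bounded distortion fails, but a sufficient weaker form survives: by uniform continuity of $\log \|Df|_{G_i}\|$ on $\overline U$, for any $\epsilon>0$ one may choose the partition diameter $\alpha>0$ so small that any two points whose orbits remain within distance $\alpha$ for $n$ iterates have logarithmic Jacobians differing by at most $\epsilon n$. Since the midly hyperbolic times constructed in Subsection 2.3 have bounded gaps, the atom $P^{F_n}(x)$ is contained in a Bowen ball of radius of order $\alpha$, and the volume estimate
\[
\mathrm{Leb}_D\bigl(P^{F_n}(x)\bigr)\leq \frac{C'\,C^{\sharp \partial F_n}\,e^{\epsilon n}}{\psi_i^{F_n}(x)}
\]
of Lemma \ref{dist} holds in $\mathcal{C}^1$ with a (possibly larger) $\epsilon$ that can still be made arbitrarily small by shrinking $\alpha$. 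With this substitute in hand, the entropy computation of Section 4 proceeds unchanged and yields a component $\hat\nu$ of some $\mu\in pw(x)$ satisfying $h(\hat\nu)\geq \int\psi_i\,d\hat\nu$. Since Ruelle's inequality is valid in $\mathcal{C}^1$, $\hat\nu$ satisfies the Pesin entropy formula.

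What is lost by dropping $\mathcal{C}^{1+}$ is precisely what is omitted from the conclusion: the upgrade from the Pesin entropy formula to absolute continuity of unstable conditionals (which requires the $\mathcal{C}^{1+}$ Ledrappier--Young machinery) and the basin property of ergodic components (which requires absolute continuity of Pesin stable laminations). Consequently, the dichotomy of Proposition \ref{fond} collapses into the single statement that some $\mu\in pw(x)$ admits a component satisfying the Pesin entropy formula.

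The main technical point is the $\mathcal{C}^1$ substitute for Lemma \ref{dist}. The key observation is that the boundedness of gaps between successive hyperbolic times in $F_n$, inherited from the definition of midly hyperbolic times, forces $P^{F_n}(x)$ to sit inside a genuine $(n,C\alpha)$-Bowen ball; the uniform continuity modulus of $Df$ then delivers the $e^{\epsilon n}$ distortion factor uniformly in $n$ and $x$, which is exactly what the entropy argument requires.
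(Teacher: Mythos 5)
You correctly isolate the $\mathcal C^{1+}$ ingredient in Lemma \ref{dist}, and your proposed substitute (uniform continuity of $\log\psi_i$ giving an $e^{\epsilon n}$ distortion factor, combined with the bounded geometry of Lemma \ref{Lem:Pliss-iterate} to place $P^{F_n}(x)$ in a Bowen-type ball) is the standard and correct $\mathcal C^1$ workaround; the remaining ingredients (Proposition \ref{Thm:density-point}, the lemmas of Section 2, asymptotic $h$-expansivity from \cite{fis}, Ruelle's inequality) are indeed $\mathcal C^1$ results. However, there is a genuine gap in the hyperbolic branch of the argument that you do not address.

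In the proof of Proposition \ref{pro:mar} the sets $A_n^M$ are produced by Borel--Cantelli solely from the density condition on $G^{\delta,M}_{-\Phi^{i+1}}\cap E^{\delta}_{\Phi^i}(P_0)$, with no control whatever on the empirical measures $\mu_y^n$ for $y\in A_n^M$. The limit $\nu$ constructed by averaging $\mathrm{Leb}_D^{A_n^M}$ is therefore some invariant measure satisfying the Pesin formula, but a priori it has no relation to $pw(x)$ for any particular $x$. In the $\mathcal C^{1+}$ paper this is irrelevant: the conclusion for $x\in\mathcal A_i$ is that $x$ lies in the \emph{basin} of an ergodic SRB measure, and that link is established precisely through the absolute continuity of the Pesin stable lamination — the very mechanism you correctly discard in $\mathcal C^1$. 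Once the basin step is dropped, Proposition \ref{pro:mar} as written no longer produces any $\mu\in pw(x)$, so the claim that "the entropy computation of Section 4 proceeds unchanged and yields a component $\hat\nu$ of some $\mu\in pw(x)$" is not justified on the set $\mathcal A_i=\{\alpha_i=1,\ \beta_{i+1}>0\}$.

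The missing step is the analogue, for the hyperbolic case, of Lemma \ref{lem:tech}. There the essential-range device is used to pick, for a.e.\ $x$ and each $M$, a $\nu_M\in pw(x)$ and sets $A_n$ whose empirical measures $\mu_y^n$ are uniformly $\frac1M$-close to $\nu_M$ while still satisfying the density condition $d_n\bigl(\overline{E^\delta_{\Phi^{i+1}_y}(M)}\bigr)>\lambda$; this is what ties the eventual component $\hat\nu$ to $pw(x)$. To close your argument on $\mathcal A_i$ one must repeat this device tracking instead the density of $G^{\delta,M}_{-\Phi^{i+1}_y}\cap E^{\delta}_{\Phi^i_y}(P_0)$, keeping the $\mathfrak d(\mu_y^n,\nu_M)\leq 1/M$ constraint, and then run the Section~4 computation on those $A_n$ so that the limit averaged measure equals some $\nu\in pw(x)$ of which $\hat\nu$ is a component. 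This is plausible and follows the same pattern, but it is a nontrivial modification that your write-up neither carries out nor mentions.
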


\section{Proof of Proposition \ref{prop:second}}

We consider a sequence  $(\mu_q)_q$ of distinct ergodic SRB measures converging to some $\mu\in \mathcal M(\Lambda,f)$.  Note that $\mu$ is in general not ergodic.  We let  $i\in \{0,  \cdots,  k\}$ be the  (unique) integer with $\mu(\phi^i_*>0)=1$ and  $\mu(\phi^{i+1}_*>0)<1$. 

\subsection{The hyperbolic case}
We assume firstly $\mu(\phi^{i+1}_*< 0)>0$.   As $\mu_q$ are ergodic measures, the measurable function $\phi^i_*$ is constant $\mu_q$-a.e.  equal to $\int \phi^i\, d\mu_q$.  Fix $\delta_0\in ]0, \int \phi^i\,d\mu[$.  For $q$ large enough we have $\int \phi^i\, d\mu_q>\delta_0$.  Then we may choose $\delta_1>0$ so small that  $\lambda=\mu(\phi^{i+1}_*<-\delta_1)>0$.  Finally we let  $0<\delta<\min(\delta_0,\delta_1)$ such that $\mu(\phi^i_*<2\delta)<\lambda/4$.

For a subset $\mathrm E$ of $\mathbf M$ we let $\chi_{\mathrm E}$ be the indicator function of $\mathrm E$.  By Birkhoff ergodic Theorem for any $q$,  there is a set $\mathcal F_q$ of full $\mu_q$-measure such that for $x\in \mathcal F_q$ the empirical measures $\mu_{x}^n$,  $ \mu_{x}^n[F_{-\Phi_{i+1}^x}^{\delta,M}(M)]$, $ \mu_{x}^n[E^{\delta}_{\Phi_i^x}(P) ]$,  $\mu_{x_q}^n[F_{-\Phi_{i+1}^x}^{\delta,M}(M)\cap E^{\delta}_{\Phi_i^x}(P) ]$ are converging as follows.  Recall the sets $O_M$ and $H_\delta$ have been defined just after Lemma \ref{two}.  For $P\in \mathbb N^*$ we write $H_{\delta}^P(\phi^i)=\bigcup_{k=1,\cdots, P}T^{-k}H_\delta(\phi^i)$.  Then we have 
\begin{align*}\mu_{x}^n&\xrightarrow{n}\mu_q,\\
\mu_{x}^n[E^{\delta}_{\Phi_i^x}(P) ]&\xrightarrow{n}\chi_{H_{\delta}^P(\phi^i)}\mu_q=:\eta^{q,P},\\
 \mu_{x}^n[F_{-\Phi_{i+1}^x}^{\delta,M}(M)] &\xrightarrow{n} \chi_{O_M(-\phi^{i+1},\delta)}\mu_q=:\zeta^{q,M},\\
  \mu_{x_q}^n[F_{-\Phi_{i+1}^x}^{\delta,M}(M)\cap E^{\delta}_{\Phi_i^x}(P) ]&\xrightarrow{n}\chi_{O_M(-\phi^{i+1},\delta)\cap H_{\delta}^P(\phi^i)}\mu_q=:\mu^{q,M,P}
\end{align*}

By using (again) a Cantor diagonal argument we can assume the following successive limits exist :
\begin{align*}\mu^{M,P}&=\lim_q\mu^{q,M,P},  \ \mu_P=\lim_M \mu^{M,P}, \ \hat \mu=\lim_P\mu_P,\\
\eta^{P}&=\lim_q\eta^{q,P}, \ \eta=\lim_P\eta_P,\\
\zeta^{M}&=\lim_q\zeta^{q,M}, \ \zeta=\lim_M\zeta^{M}.
\end{align*} 
Observe that $\hat \mu$ is a component of $\eta$ and $\zeta$ which are both components of $\mu$. 
\begin{lem}
$$\hat \mu(\mathbf M)>\lambda/2.$$
\end{lem}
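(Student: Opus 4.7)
The plan is to bound $\hat\mu(\mathbf M)$ from below by $\mu(K_-\cap K_+)$, where
\[
K_- := \bigcup_{k\geq 1} T^{-k} H_{\delta}(-\phi^{i+1}), \qquad K_+ := \bigcup_{k\geq 1} T^{-k} H_{\delta}(\phi^{i}),
\]
and then estimate $\mu(K_\pm)$ via the ergodic decomposition of $\mu$.

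First I refine the choice of $\delta$ within the admissible range (keeping $0<\delta<\min(\delta_0,\delta_1)$ and $\mu(\phi^{i}_*<2\delta)<\lambda/4$) so that $\mu$ charges no level set of the form $\{y:\phi^{i}_m(T^{-m}y)=m\delta\}$ or $\{y:-\phi^{i+1}_m(T^{-m}y)=m\delta\}$, $m\in\mathbb N^\ast$. Only countably many $\delta$ are excluded (each such pushforward of $\mu$ has at most countably many atoms), and the topological boundaries $\partial O_M(-\phi^{i+1},\delta)$ and $\partial H_{\delta}^P(\phi^{i})$ lie in finite unions of such level sets. Hence $O_M\cap H_\delta^P$ is a $\mu$-continuity set for every $M,P$, and Portmanteau gives $\mu^{q,M,P}=\chi_{O_M\cap H_\delta^P}\mu_q\xrightarrow[q]{w^\ast}\chi_{O_M\cap H_\delta^P}\mu=:\mu^{M,P}$, so $\mu^{M,P}(\mathbf M)=\mu(O_M\cap H_\delta^P)$.

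Because $H_{\delta}(-\phi^{i+1})\subset S_M^{i+1}$ for every $M$, we have $O_M\supset \bigcup_{k=1}^M T^{-k}H_{\delta}(-\phi^{i+1})$, and this inner union increases to $K_-$. Monotone convergence applied to the sequence (increasing in $M$) $\bigcup_{k=1}^M T^{-k}H_{\delta}(-\phi^{i+1})\cap H_\delta^P$ then yields $\liminf_M \mu(O_M\cap H_\delta^P)\geq \mu(K_-\cap H_\delta^P)$, hence $\mu_P(\mathbf M)\geq \mu(K_-\cap H_\delta^P)$. Since $H_\delta^P\nearrow K_+$ as $P\to\infty$,
\[
\hat\mu(\mathbf M)\;\geq\;\liminf_P\mu_P(\mathbf M)\;\geq\;\lim_P \mu(K_-\cap H_\delta^P)\;=\;\mu(K_-\cap K_+).
\]

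It remains to control $\mu(K_\pm)$. Using the ergodic decomposition $\mu=\int\mu_\omega\,d\mathbb P(\omega)$, for any ergodic component with $\int\phi^{i}\,d\mu_\omega>\delta$ the Pliss lemma (applied uniformly to the bi-infinite $\mu_\omega$-orbits) gives a density $\alpha=\alpha(\delta,\|\phi^i\|_\infty,\int\phi^i d\mu_\omega)>0$ of two-sided $\delta$-hyperbolic times, so $\mu_\omega(H_{\delta}(\phi^{i}))\geq\alpha>0$ and, by ergodicity, $\mu_\omega(K_+)=1$. Integrating, $\mu(K_+)\geq \mu(\phi^{i}_*>\delta)\geq\mu(\phi^{i}_*\geq 2\delta)>1-\lambda/4$. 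The symmetric reasoning applied to $-\phi^{i+1}$, using $\delta<\delta_1$, yields $\mu(K_-)\geq\mu(\phi^{i+1}_*<-\delta_1)=\lambda$. Combining,
\[
\hat\mu(\mathbf M)\;\geq\;\mu(K_-)+\mu(K_+)-1\;>\;\lambda+\bigl(1-\tfrac{\lambda}{4}\bigr)-1\;=\;\tfrac{3\lambda}{4}\;>\;\tfrac{\lambda}{2}.
\]

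The principal obstacle is that $H_{\delta}^P(\phi^{i})$ is $G_\delta$ rather than open or closed, so Portmanteau does not immediately identify its $\lim_q\mu_q$-mass; the generic choice of $\delta$ is what circumvents this. Once it is in place, the remainder reduces to two monotone passages and the standard ergodic-decomposition/Pliss argument.
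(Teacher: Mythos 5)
Your reduction to $\hat\mu(\mathbf M)\geq\mu(K_-)+\mu(K_+)-1$ and the final numerics match the paper's inclusion--exclusion $\hat\mu(\mathbf M)\geq\zeta(\mathbf M)+\eta(\mathbf M)-1$, and your use of the ergodic maximal inequality/Pliss to show $\mu(K_\pm)$ is large is sound. The gap is in the claim that $\partial H_\delta^P(\phi^i)$ lies in (finite, or even countable) unions of level sets $\{\phi^i_m(T^{-m}\cdot)=m\delta\}$, and hence that a generic choice of $\delta$ makes $O_M\cap H_\delta^P$ a $\mu$-continuity set. The set $H_\delta(\phi^i)=\bigcap_{m\geq 1}\{g_m>0\}$ with $g_m=\phi^i_m\circ T^{-m}-m\delta$ is a $G_\delta$ that is in general neither open nor closed, and its boundary need not sit inside $\bigcup_m\{g_m=0\}$: a point $x$ with $g_m(x)>0$ for all $m$ but $\inf_m g_m(x)=0$ (not attained) belongs to $H_\delta$ yet can be approximated by points that fail $g_{m_n}\leq 0$ with $m_n\to\infty$, so it can lie in $\partial H_\delta$ while avoiding every level set. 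Thus the generic-$\delta$ device only handles $O_M$ (a finite union of finite intersections of open sets), not $H_\delta^P$, and Portmanteau does not give $\mu^{M,P}=\chi_{O_M\cap H_\delta^P}\mu$ nor even a lower bound for a $G_\delta$ set. Without that identification your chain $\hat\mu(\mathbf M)\geq\mu(K_-\cap K_+)$ is not established.

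The paper avoids this obstacle precisely by never identifying $\eta^P$ with $\chi_{H_\delta^P}\mu$. For the $O_M$-side it uses openness directly, $\zeta^M=\lim_q\chi_{O_M}\mu_q\geq\chi_{O_M}\mu$, as you also could. For the $H_\delta^P$-side it works instead with the complement $\overline\eta=\mu-\eta$: Remark~\ref{poli} and Remark~\ref{rem:nouvpr} give $\int\phi^i_*\,d\overline\eta=\delta\,\overline\eta(\mathbf M)$ and $\phi^i_*>0$ $\overline\eta$-a.e.; Markov's inequality then forces $\overline\eta(\phi^i_*<2\delta)\geq\overline\eta(\mathbf M)/2$, and since $\mu(\phi^i_*<2\delta)<\lambda/4$ by the choice of $\delta$, one gets $\overline\eta(\mathbf M)<\lambda/2$, hence $\eta(\mathbf M)>1-\lambda/2$. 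This sidesteps the weak-$*$ convergence of indicator measures of a $G_\delta$ set altogether. To repair your argument you would need to replace the Portmanteau step for $H_\delta^P$ by something of this kind, or argue that $\mu$ gives mass zero to $\partial H_\delta$ for a reason specific to the setting, which you have not supplied.
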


\begin{proof}Let $x\in \mathcal F_q$. The limit measure 
$\eta=\lim_P\lim_q \lim_n \mu_{x}^n[ E^{\delta}_{\Phi_i^x}(P) ]$, and the complement component $\overline{\eta}=\mu-\eta$ satisfy  $\int \phi^i_*\, d\overline{\eta}=\int \phi^i\,d\overline{\eta}=\delta\overline{\eta}(\mathbf M)$ and $\eta(\phi^i_*>\delta)=\eta(\mathbf M)$ (see Remark \ref{poli} and Remark \ref{rem:nouvpr}).  Since  $\phi^i_*(x)>0$  for $\mu$-a.e.   $x$,  therefore  for $\overline{\eta}$-a.e.  $x$, we have $\frac{1}{2\delta}\int \phi^i_*\,d\overline{\eta}\geq \overline{\eta}(\phi^i_*\geq 2\delta)$. 
Then we get \begin{align*}
\overline{\eta}(\phi^i_*<2\delta)&=\overline{\eta}(\mathbf M)-\overline{\eta}(\phi^i_*\geq 2\delta),\\
&\geq \overline{\eta}(\mathbf M)-\frac{1}{2\delta}\int \phi^i_*\,d\overline{\eta}, \\
&\geq \overline{\eta}(\mathbf M)/2.
\end{align*}
 But by assumption $\mu(\phi^i_*<2\delta)<\lambda/4$,  therefore $\overline{\eta}(\mathbf M)<\lambda/2$ and $\eta(\mathbf M)>1-\lambda/2$.

On the other hand the set $O_M=O_M(-\phi^{i+1},\delta)$ being open, the  limit $\zeta^M=\lim_q\chi_{O_M}\mu_q$ is larger than $
\chi_{O_M}\mu$.  But $O_M$ contains $\bigcup_{1\leq k\leq M}T^{-k}H_\delta(-\phi^{i+1})$ and $\mu\left(\{\phi^{i+1}_*<-\delta\}\setminus \left( \bigcup_{ k\in \mathbb N^*}T^{-k}H_\delta(-\phi^{i+1})\right)\right)=0$. Therefore any limit of $\chi_{O_M}\mu$ when $M$ goes to infinity is larger than $\chi_{\{\phi^{i+1}_*<-\delta\}}\mu$.  Consequently $\zeta(\mathbf M)$ is larger than $\mu(\phi^{i+1}_*<-\delta)>\lambda$.
 We conclude that 
\begin{align*}
\hat \mu(\mathbf M)&\geq \zeta(\mathbf M)+\eta(\mathbf M)-1,\\ 
&>\lambda/2.
\end{align*}

\end{proof}

\begin{rem}\label{rem:obs}
It follows from the first part of the above proof that the total mass of 
$\overline{\eta}=\overline{\eta}_\delta$ is less than $2\mu(\phi^i_*<2\delta)$, therefore goes to zero when $\delta$ goes to zero. 
\end{rem}

Recall the $i^{th}$ center exponent at $\mu$-typical points is positive.  Let $W^i(x)$ be the associated Pesin local manifold tangent to $G_i(x)$ at $\mu$-typical point $x$.  
For each $q$ we let $\nu_q$ be the conditional measure of $\mu_q$ on such  an unstable disc $D_q$.   We may assume $\Leb_{D_q}$ a.e. point $x$ lies in $\mathcal F_q$. Then we define 

$$\mu_n^{q,M,P}=\int \mu_x^n[F_{-\Phi_{i+1}^x}^{\delta,M}(M)\cap E^{\delta}_{\Phi_i^x}(P) ]\, d\mathrm{Leb}_{D_q}(y)$$
and
$$\nu_n^{q,M,P}=\frac{\mu_n^{q,M,P}}{\mu_n^{q,M,P}(\mathbf M)}. $$

Observe that $\mu_n^{q,M,P}$ (resp.  $\nu_n^{q,M,P}$) goes to
 $\mu^{q,M,P}$  (resp.  $\nu^{q,M,P}=\frac{\mu^{q,M,P}}{\mu^{q,M,P}(\mathbf M)}$) when $n$ goes to infinity.  Arguing as in the previous section,  for any $\epsilon>0$ we have for any partition $Q$ with diameter less than some $\alpha>0$ and for any $m\in \mathbb N^*$ :
$$\mu^{q,M,P}(\mathbf M)\frac{H_{\nu^{q,M,P}}(Q^m)}{m}\geq \int \psi_i\,  d\nu^{q,M,P}+o_{M,P}(1)-\epsilon, $$ 
where $o_{M,P}(1)$ denotes some function $f(M,P)$ satisfying  $\limsup_P\limsup_M |f(M,P)|=0$.   
With $\nu$ being  the limit measure $\nu = \lim_P\lim_M\lim_q\lim\nu^{q,M,P}=\frac{\hat \mu}{\hat\mu(\mathbf M)}$  we get in  the same way 
$$h(\nu)\geq \int \psi_i\, d\nu.$$
Moreover by applying   Lemma \ref{nouv} we have $\phi^{i+1}_*(x)\leq -\delta$ for $\zeta$ a.e.  $x$, therefore for $\nu$-a.e. $x$.  As $\hat \mu$ is a component of $\mu$ and $\mu(\phi^i_*>0)=1$, we have also $\phi^{i}_*(x)>0$  for $\nu$-a.e. $x$.   Thus  $\nu$ is a  hyperbolic SRB component of $\mu$ with unstable index $i$.  By using the absolute continuity of the stable foliation at $\hat \mu$ typical points one  concludes that a Lebesgue positive subset of $D_q$ is contained in the basin of an ergodic component $\xi$ of  $\nu$ for large $q$.  As  Lebesgue a.e.  point in $D_q$ is typical for $\mu_q$ we  conclude that $\xi=\mu_q=\nu=\mu$ for $q$ large enough.  

\begin{rem}
Contrarily to Proposition \ref{fond} we do not need here to work with midly hyperbolic times, because the measure $\mathrm{Leb}_{D_q}$ integrating the empirical measure  $ \mu_x^n[F_{-\Phi_{i+1}^x}^{\delta,M}(M)\cap E^{\delta}_{\Phi_i^x}(P) ]$ in the definition of $\mu_n^{q,M,P}$ does not depend on $M$.
\end{rem}

\subsection{The non-hyperbolic case}

We assume now $\mu(\phi_*^{i+1}\geq 0)=1$. We let again $D_q$ be a $\mu_q$-typical Pesin unstable disc of dimension $\mathrm{dim}(G_i)$ as above. 
We put
$$\mu_n^{q,\delta,M,P}=\int \mu_x^n[\overline{E_{\Phi_{i+1}^x}^{\delta}(M)}\cap E^{\delta}_{\Phi_i^x}(P) ]\, d\mathrm{Leb}_{D_q}(y),$$
$$\eta_n^{q,\delta,M}=\int \mu_x^n[\overline{E_{\Phi_{i+1}^x}^{\delta}(M)} ]\, d\mathrm{Leb}_{D_q}(y).$$
The measure $\eta_n^{q,\delta,M}$ is the same as $\eta_n^{q,M}$ in the above hyperbolic case.  However we write here  the dependence in $\delta$ as we will now make vary this parameter. 
Let $\mu^{q,\delta,M,P}$and $\eta^{q,\delta,M}$ be the limit of
$\mu_n^{q,\delta,M,P}$ and $\eta_n^{q,\delta,M}$ when $n$ goes to infinity.  The limits $\mu^\delta=\lim_P\lim_M\lim_q \mu^{q,\delta,M,P}$ and $\eta^\delta=\lim_M\lim_q\eta^{q,\delta,M}$, and the normalized probability $\nu^{\delta}=\frac{\mu^{\delta}}{\mu^{\delta}(\mathbf M)}$ are $f$-invariant and satisfy the following properties:

\begin{itemize}
\item $\mu^\delta$ is a component of $\eta^\delta$, which is itself a component of $\mu$,
\item  $\phi_*^{i+1}(x)\geq\delta$  for $\overline\eta^\delta$-a.e. $x$ with $\overline \eta_\delta=\mu-\eta^\delta$, therefore  $\eta^\delta(\mathbf M)\geq \mu(\phi_*^{i+1}=0 )>0$ and $\int \phi^{i+1} \, d\eta_\delta=\delta \eta_\delta(\mathbf M)$ (see Remark \ref{poli} and Remark \ref{rem:nouvpr} or  Proposition 6.2 of \cite{BCS}),
\item $h(\nu^\delta)\geq \int \psi_i \, d\nu^\delta$ by arguing as in Section 3.1,
\item $\mathfrak d(\mu^\delta, \eta^\delta)\xrightarrow{\delta\rightarrow 0}0$  by Remark \ref{rem:obs}.
\end{itemize}

\begin{lem}
 For any limit     $\hat \mu$ of $(\mu^\delta)_\delta$ when $\delta$ goes to zero, the associated probability $\hat \nu=\frac{\hat \mu}{\hat\mu(\mathbf M)}$  satisfies the following properties. 

\begin{enumerate}
\item $\int \phi^{i+1}\, d\hat \nu= 0$,
\item $\hat \nu\leq \mu$, in particular  $\hat \nu(\phi^{i+1}_*\geq 0)=1$,
\item $h(\hat \nu)\geq \int  \psi_i \, d\hat\nu$.
\end{enumerate}
\end{lem}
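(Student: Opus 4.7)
The plan is to extract a common subsequence $(\delta_k)_k$ tending to $0$ along which $\mu^{\delta_k}$ and $\eta^{\delta_k}$ both converge weak-$*$, and then to read off each of the three properties from the four bulleted facts established just before the statement. Almost everything is a direct passage to the limit $\delta \to 0$; no additional mechanism should be needed.

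First I would fix a subsequence $\delta_k \to 0$ along which $\mu^{\delta_k} \to \hat\mu$. Because $\mathfrak d(\mu^\delta, \eta^\delta) \to 0$, the sequence $\eta^{\delta_k}$ converges to the same limit $\hat\mu$, and the total masses $\eta^{\delta_k}(\mathbf M)$ and $\mu^{\delta_k}(\mathbf M)$ converge to the common value $\hat\mu(\mathbf M)$. The lower bound $\eta^\delta(\mathbf M) \geq \mu(\phi^{i+1}_*=0) > 0$ ensures $\hat\mu(\mathbf M) > 0$, so $\hat\nu = \hat\mu/\hat\mu(\mathbf M)$ is well defined, and $\nu^{\delta_k} \to \hat\nu$ as well.

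For item (2), each $\eta^\delta$ is a component of $\mu$, i.e.\ $\mu - \eta^\delta$ is a non-negative Borel measure; since weak-$*$ limits of non-negative measures are non-negative, $\mu - \hat\mu \geq 0$, which is the sense in which $\hat\mu \leq \mu$. Then $\hat\mu\bigl(\{\phi^{i+1}_* < 0\}\bigr) \leq \mu\bigl(\{\phi^{i+1}_* < 0\}\bigr) = 0$ by hypothesis, so after normalization $\hat\nu(\phi^{i+1}_*\geq 0) = 1$. For item (1), I would divide the identity $\int \phi^{i+1}\, d\eta^\delta = \delta\, \eta^\delta(\mathbf M)$ by $\eta^\delta(\mathbf M)$ to get $\int \phi^{i+1}\, d\bigl(\eta^\delta/\eta^\delta(\mathbf M)\bigr) = \delta$; since $\eta^{\delta_k}/\eta^{\delta_k}(\mathbf M) \to \hat\nu$ and $\phi^{i+1}$ is continuous, letting $k \to \infty$ yields $\int \phi^{i+1}\, d\hat\nu = 0$.

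For item (3), I would appeal to Fisher's $h$-expansivity theorem (already invoked in Subsection 4.2): under the standing partially hyperbolic hypothesis with one-dimensional center subbundles, the entropy map on $\mathcal M(\mathbf M, f)$ is upper semicontinuous. Combined with the already established $h(\nu^\delta) \geq \int \psi_i\, d\nu^\delta$ and the continuity of $\psi_i$, upper semicontinuity and $\nu^{\delta_k} \to \hat\nu$ give
\begin{equation*}
h(\hat\nu) \;\geq\; \limsup_{k\to\infty} h(\nu^{\delta_k}) \;\geq\; \lim_{k\to\infty} \int \psi_i\, d\nu^{\delta_k} \;=\; \int \psi_i\, d\hat\nu.
\end{equation*}
I do not foresee a genuine obstacle. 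The subtlest point is item (2): the set $\{\phi^{i+1}_* \geq 0\}$ is neither open nor closed, so the conclusion must pass through the measure-theoretic inequality $\hat\mu \leq \mu$ rather than through a Portmanteau argument on indicator functions, but this is a one-line consequence of the fact that the cone of positive finite Borel measures is weak-$*$ closed.
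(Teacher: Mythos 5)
Your proof is correct and follows essentially the same route as the paper's: pass to a common subsequence $\delta_k\to 0$, use $\mathfrak d(\mu^\delta,\eta^\delta)\to 0$ to identify the limits of $\mu^{\delta_k}$, $\eta^{\delta_k}$, and $\eta^{\delta_k}/\eta^{\delta_k}(\mathbf M)$; read item (1) from $\int\phi^{i+1}\,d\eta^\delta=\delta\,\eta^\delta(\mathbf M)$, item (2) from the fact that $\eta^\delta$ is a component of $\mu$ (the paper phrases this as $\mu(\phi^{i+1}_*=0)\hat\nu\leq\hat\mu\leq\mu$, while you go directly through $\hat\mu\leq\mu$, which is the same inequality up to normalization), and item (3) from upper semicontinuity of entropy via \cite{fis} together with $h(\nu^\delta)\geq\int\psi_i\,d\nu^\delta$. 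No gap; the argument matches the paper's.
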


\begin{proof}
\begin{enumerate}
\item Observe firstly that $\frac{\eta_\delta}{\eta_\delta(\mathbf  M)}$ is converging to $\hat \nu$. Then by taking the limit when $\delta$ goes to zero in $\int \phi^{i+1} \, d\eta_\delta=\delta \eta_\delta(\mathbf M)$, we get the desired equality.
\item We have \begin{align*}
\mu(\phi_*^{i+1}=0 ) \hat \nu&\leq \lim_{\delta\to 0}\eta^\delta(\mathbf M)\nu^\delta,\\
&\leq \hat \mu,\\
&\leq \mu.
\end{align*} 
\item This follows from the aforementioned upper semicontinuity of the entropy function and the inequality $h(\nu^\delta)\geq \int \psi_i \, d\nu^\delta$ for any $\delta>0$. 
\end{enumerate}
\end{proof}

From the two first items we obtain $\phi^{i+1}_*(x)=0$ for  $\hat\nu$-a.e.  $x$.  
Then $\hat \nu$ satisfies the entropy formula by (3), therefore $\hat \nu$ is a non-hyperbolic SRB measure, which is a component of $\mu$ by (2).

 %As $\mu_p$ is SRB,  $\nu_p$ is absolutely continuous w.r.t. the Lebesgue measure on $D_p$.  Let $\xi_p$ be the measure induced by $\nu_p$ on a subset $E_n$  with $\nu_p(E_p)>1/2$ where the density is bounded by some $C$. In particular we have $2C\mathrm{ Leb}_{D_p}(A)\geq \mu_p(A)$  for any measurable set $A$. INUTILE.
\end{Large}

\end{document}